\def\?[#1]{\textbf{[#1]}\marginpar{\Large{\textbf{??}}}}
\numberwithin{equation}{section}
\newtheorem{theorem}{Theorem}[section]
\newtheorem{lemma}[theorem]{Lemma}
\newtheorem{proposition}[theorem]{Proposition}
\newtheorem{remark}[theorem]{Remark}
\newtheorem{definition}[theorem]{Definition}
\newcommand{\mc}{\mathcal}
\newcommand{\norm}[1]{\left\Vert#1\right\Vert}
\newcommand{\bbar}{\overline}
\let\Im=\Imag
\let\Re=\Real
\DeclareMathOperator{\Vol}{Vol}
\renewcommand{\tilde}{\widetilde}          % wider `tilde'
\DeclareMathSymbol{\leqslant}{\mathalpha}{AMSa}{"36} % nicer `smaller or equal'
\DeclareMathSymbol{\geqslant}{\mathalpha}{AMSa}{"3E} % nicer `larger or equal'
\DeclareMathSymbol{\eset}{\mathalpha}{AMSb}{"3F}     % nicer `emptyset'
\renewcommand{\leq}{\;\leqslant\;}                   % redef. of < or =
\renewcommand{\geq}{\;\geqslant\;}                   % redef. of > or =
\newcommand{\C}{\mathbb{C}}
\newcommand{\D}{\mathbb{D}}
\newcommand{\R}{\mathbb{R}}
\renewcommand{\H}{\mathbb{H}}
\newcommand{\N}{\mathbb{N}}
\newcommand{\Q}{\mathbb{Q}}
\newcommand{\U}{\mathbb{U}}
\newcommand{\E}{\mathbb{E}}
\renewcommand{\P}{\mathbb{P}}
\def\S{\mathbb{S}}
\def\bi{\begin{itemize}}
\def\ei{\end{itemize}}
\def\bnum{\begin{enumerate}}
\def\enum{\end{enumerate}}
\def\<#1{\langle #1 \rangle}
\def\t{\mathbf{t}}
\def\z{\mathbf{z}}
\def\U{\Upsilon_{\frac{\gamma}{2}}}
\def\cyl{\mathcal{C}_\infty}
\def\x{\mathbf{x}}
\title[Fusion in LCFT]{Fusion asymptotics for Liouville correlation functions}
\author[Guillaume Baverez]{Guillaume Baverez$^1$}
\thanks{$^1$Supported by the EPSRC grant EP/L016516/1 for the University of Cambridge CDT (CCA)}
 \address{University of Cambridge, Centre for Mathematical Sciences, Cambridge CB3 0WA, UK}
 \email{gb539@cam.ac.uk}
 \author[Mo Dick Wong]{Mo Dick Wong$^{1,2}$}
\thanks{$^2$Supported by a Croucher Foundation Scholarship %and the EPSRC grant EP/L016516/1 for the University of Cambridge CDT (CCA)
}
 \address{University of Cambridge, Centre for Mathematical Sciences, Cambridge CB3 0WA, UK}
 \email{mdw46@cam.ac.uk}
\begin{document}

\maketitle
 
 %\vspace{1cm}
\begin{abstract}
In \cite{DKRV1}, David-Kupiainen-Rhodes-Vargas introduced a probabilistic framework based on the Gaussian Free Field and Gaussian Multiplicative Chaos in order to make sense rigorously of the path integral approach to Liouville Conformal Field Theory (LCFT). We use this setting to compute fusion estimates for the four-point correlation function on the Riemann sphere, and find that it is consistent with predictions from the framework of theoretical physics known as the conformal bootstrap. This result fits naturally into the famous KPZ conjecture \cite{KPZ} which relates the four-point function to the expected density of points around the root of a large random planar map weighted by some statistical mechanics model.

From a purely probabilistic point of view, we give non-trivial results on negative moments of GMC. We give exact formulae based on the DOZZ formula in the Liouville case and asymptotic behaviours in the other cases, with a probabilistic representation of the limit.

Finally, we show how to extend our results to boundary LCFT, treating the cases of the fusion of two boundary or bulk insertions as well as the absorption of a bulk insertion on the boundary. 
%Based on the rigorous path integral formulation of Liouville Conformal Field Theory introduced by David-Kupiainen-Rhodes-Vargas \cite{DKRV1}, we compute the first order asymptotic of the four-point correlation function on the sphere as two insertions get close together, which is supposed to describe the density of vertices around the root of a large random planar map. Moreover, our results are consistent with predictions from the framework of theoretical physics known as the conformal bootstrap, featuring the DOZZ formula and logarithmic corrections in the distance between the insertions.
%
%Finally, we use our framework to 
\end{abstract}
%\vspace{0.5cm}

\footnotesize

%\noindent{\bf Key words or phrases:}  Liouville Quantum Gravity, complex tori, modular covariance, Gaussian multiplicative chaos, KPZ formula, Polyakov formula.  

%\noindent{\bf MSC 2000 subject classifications:  60D05, 81T40,  81T20.}    

\normalsize

%\vspace{1cm}

%\newpage
%%\vspace{0.1cm}
%%

\tableofcontents

%\pagebreak 
 
%%%%%%%%%%%%%%%%%%%%%%%%%%%%%%%%%%%%%%%%%%%%%%%%%%%%%%%%
\section{Introduction}
\label{sec:intro}
	%%%%%%%%%%%%%%%%%%%%%%%%%%%%%%%%%%%%%%%%%%%%%%%%%%%%%%%%
	\subsection{Path integral}
	\label{subsec:path_integral}
The Liouville action on the Riemann sphere $\S^2\cong\widehat{\C}=\C\cup\{\infty\}$ is the action functional $S_L:\Sigma\to\R$ (where $\Sigma$ is some function space to be determined) defined by\footnote{Usually the Liouville action has also a curvature term, which we have omitted here for simplicity. This will not play an important role since we will consider metrics whose curvature concentrates on the unit circle.}
\begin{equation}
S_\mathrm{L}(X)=\frac{1}{4\pi}\int_{\S^2}(|\nabla X|^2+4\pi\mu e^{\gamma X}g(z))d^2z
\end{equation}
where $g(z)=|z|_+^{-4}:=(|z|\vee 1)^{-4}$ is the background metric, $\gamma\in(0,2)$ is the parameter of the theory, and $\mu>0$ is the cosmological constant (whose value is irrelevant in this paper). Another important parameter is the so-called \emph{background charge} which is defined by $Q:=\frac{\gamma}{2}+\frac{2}{\gamma}$. From here, Liouville Conformal Field Theory (LCFT) is the ``Gibbs measure" associated to $S_L$, which is formally defined in the physics literature by
\begin{equation}
\label{eq:lcft_path_integral}
\langle F\rangle:=\int F(X)e^{-S_\mathrm{L}(X)}DX
\end{equation}
for all continuous functional $F$ on $\Sigma$. Here $DX$ stands for ``Lebesgue measure" on $\mc{C}^\infty(\S^2)$, which of course does not make sense mathematically. Nonetheless, it is possible to define \eqref{eq:lcft_path_integral} in a rigorous framework using the Gaussian Free Field (GFF) and Kahane's theory of Gaussian Multiplicative Chaos (GMC) -- see \cite{DKRV1} and sections \ref{subsec:gff} and \ref{subsec:gmc} of this paper.
Roughly speaking, the GFF $X$ on $\S^2$ is the Gaussian field corresponding to the ``Gaussian measure" $e^{-\frac{1}{4\pi}\int_{\S^2}|\nabla X|^2}DX$. We will write $\P$ for its probability measure and $\E$ for the associated expectation. The GFF lives $\P$-a.s. in the topological dual of the Sobolev space $H^1(\S^2,g)$ and is therefore defined as a distribution (in the sense of Schwartz). In this context, GMC is the random measure $M^\gamma$ on $\S^2$ defined for all $\gamma\in(0,2)$ and making sense of the exponential of the GFF (which is \textit{a priori} ill-defined). This can be constructed through a regularisation of the field and we will loosely write $dM^\gamma(z)=e^{\gamma X(z)-\frac{\gamma^2}{2}\E[X(z)^2]}g(z)d^2z$ to refer to the limiting measure, even though $X$ is only defined as a distribution.

The main observables in LCFT are the \emph{vertex operators} $V_\alpha(z):=e^{\alpha X(z)}$, giving rise to the \emph{correlation functions}, which can be thought of as the Laplace transform of the field defined by the measure \eqref{eq:lcft_path_integral}:
\begin{equation}
\left\langle\prod_{i=1}^NV_{\alpha_i}(z_i)\right\rangle=\int\prod_{i=1}^Ne^{\alpha_i X(z_i)}e^{-S_\mathrm{L}(X)}DX
\end{equation}
On the sphere, these are defined for all pairwise disjoint \emph{insertions} $(z_1,...,z_n)\in\widehat{\C}^N$ and \emph{Liouville momenta} $(\alpha_1,...,\alpha_n)\in\R_+^N$ satisfying the \emph{Seiberg bounds}
\begin{equation}
\label{eq:seiberg_bounds}
\sigma:=\sum_{i=1}^N\frac{\alpha_i}{Q}-2>0\qquad\qquad\forall i,\;\frac{\alpha_i}{Q}<1
\end{equation}
In particular, this implies that the correlation function exists only if $N\geq3$.

For fixed $z_0\in\widehat{\C}$, the vertex operator $V_\alpha(z_0)$ has a geometric interpretation, as it inserts a conical singularity of order $\alpha/Q$ at $z_0$ in the physical metric (\cite{Sei,HMW}, Appendix \ref{app:conical}). Thus the second Seiberg bound is there to make the singularity integrable around $z_0$. On the other hand by Gauss-Bonnet theorem, the first bound is equivalent to asking that the surface $\S^2\setminus\{z_1,...,z_N\}$ with conical singularities of order $\alpha_i/Q$ at $z_i$ has negative total curvature.

 The correlation functions satisfy some conformal covariance under M\"obius transformation, namely if $\psi$ is such a map, then \cite{DKRV1} 
 \[\left\langle\prod_{i=1}^NV_{\alpha_i}(\psi(z_i))\right\rangle=\prod_{i=1}^N|\psi'(z_i)|^{-2\Delta_i}\left\langle\prod_{i=1}^NV_{\alpha_i}(z_i)\right\rangle\] 
where $\Delta_i=\Delta_{\alpha_i}:=\frac{\alpha_i}{2}(Q-\frac{\alpha_i}{2})$ is called the \emph{conformal dimension} of $V_{\alpha_i}(\cdot)$. This property implies that the three-point correlation function $\langle\prod_{i=1}^3V_{\alpha_i}(z_i)\rangle$ is determined by $\langle V_{\alpha_1}(0)V_{\alpha_2}(1)_{\alpha_3}(\infty)\rangle$ since there is a unique M\"obius transformation sending $(z_1,z_2,z_3)$ to $(0,1,\infty)$. The three-point correlation functions play a central role in the conformal bootstrap approach to CFTs (see Section \ref{subsec:conformal_bootstrap}). For LCFT, they are given by the celebrated DOZZ formula, a proof of which was given for the first time in \cite{KRV2}, where the authors rigorously implemented the method known as Teschner's trick \cite{T95} (see \cite{DO,ZZ} for the original derivation of the formula which uses a different approach).

We now turn to the four-point function. By conformal covariance, we can take the insertions to be at $(z_1,z_2,z_3,z_4)=(0,z,1,\infty)$ with $z\in\widehat{\C}\setminus\{0,1,\infty\}$ being the free parameter. In this paper, we will take $(\alpha_1,\alpha_2,\alpha_3,\alpha_4)$ satisfying the Seiberg bounds and will be concerned about the behaviour of the four-point function as $z\to0$ (the other fusions being easily deduced from conformal invariance). In the framework of \cite{DKRV1} using the GFF and GMC, the four-point function has the following expression for $|z|\leq1$:
\begin{equation}
\label{eq:def_correl_intro}
\begin{aligned}
\left\langle\prod_{i=1}^4V_{\alpha_i}(z_i)\right\rangle=2\gamma^{-1}\mu^{-\frac{Q\sigma}{\gamma}}\Gamma\left(\frac{Q\sigma}{\gamma}\right)|z|^{-\alpha_1\alpha_2}|1-z|^{-\alpha_2\alpha_3}\E\left[\left(\int_{\widehat{\C}}e^{\gamma\sum_{i=1}^4\alpha_iG(z_i,\cdot)}dM^\gamma\right)^{-\frac{Q\sigma}{\gamma}}\right]
\end{aligned}
\end{equation}
where $G=G(\cdot,\cdot)$ is Green's function on $(\S^2,g)$. The main feature of \eqref{eq:def_correl_intro} is that, up to explicit factors, it is expressed using negative moments of GMC. One of our main results (Theorem \ref{thm:result}) gives the exact asymptotic behaviour of \eqref{eq:def_correl_intro} as $z\to0$ using the integrability result of the DOZZ formula. Now the reader will notice that the negative exponent in the definition of \eqref{eq:def_correl_intro} depends on the $\alpha_i$'s, so the DOZZ formula does not give integrability results for \emph{all} moments of GMC but only for the one corresponding to the Liouville correlation function. However, in our framework, we lose nothing in promoting $\sigma$ to a free parameter, so we were able to find the asymptotic behaviour of all negative moments (Theorems \ref{theo:result_gmc} and \ref{theo:prob_rep1}) but only in the Liouville case did we get an exact expression for the limit. In this special case, we were able to confirm a prediction coming from the bootstrap approach to LCFT, which we review now.

	%%%%%%%%%%%%%%%%%%%%%%%%%%%%%%%%%%%%%%%%%%%%%%%%%%%%%%%%%
	\subsection{Conformal bootstrap}
	\label{subsec:conformal_bootstrap}
The foundations of the conformal bootstrap were laid in \cite{BPZ} and since then it has been acknowledged in the physics community as a powerful tool to analyse two dimensional CFTs. However it is still a challenge to make sense of the theory in a rigorous mathematical framework. One of the goals of this paper is to recover some aspects of the bootstrap predictions in the probabilistic formulation of LCFT. 

The conformal bootstrap is an algebraic approach based on the axiom that the vertex operator $V_\alpha$ can be associated to a highest-weight representation of the Virasoro algebra \cite{Ri}. It turns out that this assumption constrains the correlation functions drastically through some identities like the the Ward or BPZ equations (a null-vector equation at level 2). %Hence, unless otherwise stated, everything we mention in this subsection must be understood as heuristics and guesses rather than established results. %This being said, words from the lexical field of precaution have been removed from the following presentation for the sake of readability.
The constraints of local conformal invariance imply that all correlation functions can be constructed from more fundamental objects:
\begin{enumerate}
\item The \emph{spectrum} $\mc{S}\subset\C$. For $\alpha\in\mc{S}$, the vertex operator $V_\alpha(\cdot)$ is called a \emph{primary field}. In Liouville CFT, the spectrum is the line $Q+i\R$. It is important to notice that the conformal bootstrap assumes that vertex operators are defined for all $\alpha\in\C$ and not necessarily for $\alpha$ in the ``physical region" defined by the Seiberg bounds.%\footnote{It is still a challenge to understand complex Liouville momenta from a probabilistic point of view.}
\item The 3-point correlation functions, a.k.a. the \emph{structure constants}. In Liouville CFT, these are given by the DOZZ formula $C_\gamma(\alpha_1,\alpha_2,\alpha_3)$, which is meromorphic in each one of the $\alpha_i$'s.
\end{enumerate}
Another key idea of the conformal bootstrap is that local fields should satisfy a so-called \emph{Operator Product Expansion} (OPE), which can be understood analytically as a Taylor expansion of vertex operators in the $z$ variable. In other words, the OPE of the local operators $V_{\alpha_1}(0)V_{\alpha_2}(z)$ describes the fusion of the two insertions as $z\to0$. The fusion rule is particularly simple in the case where the Verma module associated to $V_{\alpha_2}(z)$ is reducible (i.e. $\alpha_2\in-\frac{\gamma}{2}\N^*-\frac{2}{\gamma}\N^*$), but in the case of $\alpha_1,\alpha_2$ in the spectrum, it has the following form (\cite{BZ}, equation (1.18))
\begin{equation}
\label{eq:ope}
V_{\alpha_1}(z)V_{\alpha_2}(0)=\frac{1}{8\pi}\int_\R|z|^{2(\Delta_P-\Delta_1-\Delta_2)}C_\gamma(\alpha_1,\alpha_2,Q-iP)V_{Q+iP}(0)|f_{\gamma,P}^{\alpha_{12}}(z)|^2dP
\end{equation}
where $\Delta_P=\frac{Q^2}{4}+\frac{P^2}{4}$ is the conformal dimension of $V_{Q-iP}$ and $f_{\gamma,P}^{\alpha_{12}}(z)=1+o_{z\to0}(1)$ is a so-called \emph{conformal block}, a holomorphic function of $z$ depending only on $P,\gamma,\alpha_1,\alpha_2$. Plugging this into the four-point correlation function yields\footnote{We add the superscript $^{\mathrm{cb}}$ for ``conformal bootstrap", in order to differentiate it with the correlation function given by the path integral.}
\begin{equation}
\label{eq:bootstrap}
\begin{aligned}
\langle V_{\alpha_1}(0)V_{\alpha_2}(z)&V_{\alpha_3}(1)V_{\alpha_4}(\infty)\rangle^{\mathrm{cb}}=\frac{1}{8\pi}|z|^{2(\frac{Q^2}{4}-\Delta_1-\Delta_2)}\\&\times\int_\R|z|^{\frac{P^2}{2}}C_\gamma(\alpha_1,\alpha_2,Q-iP)C_\gamma(Q+iP,\alpha_3,\alpha_4)|\mc{F}_{\gamma,P}^{\alpha_{1234}}(z)|^2dP
\end{aligned}
\end{equation}
where $\mc{F}_{\gamma,P}^{\alpha_{1234}}(\cdot)$ is the four-point conformal block coming from the contribution of the OPE conformal block. It is also holomorphic in $z$ and universal in the sense that it depends only on $\gamma,P,\alpha_1,\alpha_2,\alpha_3$ and $\alpha_4$. We call this formula the \emph{conformal bootstrap equation}. The term ``bootstrap" refers to the fact that one can recursively compute all the correlation functions on any Riemann surface of any genus by ``bootstrapping" the structure constants using the spectrum and the conformal blocks.

Let us stress again that formula \eqref{eq:bootstrap} is far from having a mathematical justification. In general, one way to make sense of the bootstrap predictions is to recover them from the rigorous probabilistic framework of DKRV. This is usually a hard matter, but first steps have been made in this direction, notably in \cite{KRV1,KRV2} where the authors showed the validity of Ward identities and BPZ differential equations and gave a proof of the DOZZ formula. At this stage, we are still far from having a probabilistic interpretation of formula \eqref{eq:bootstrap} because the spectrum and the conformal blocks are not properly understood in the path integral approach. However, we will see that in the limit where $z\to0$, these two objects disappear from the equation and we are left with DOZZ formula which is well understood. 

\begin{figure}[h!]
\centering
\includegraphics[scale=0.7]{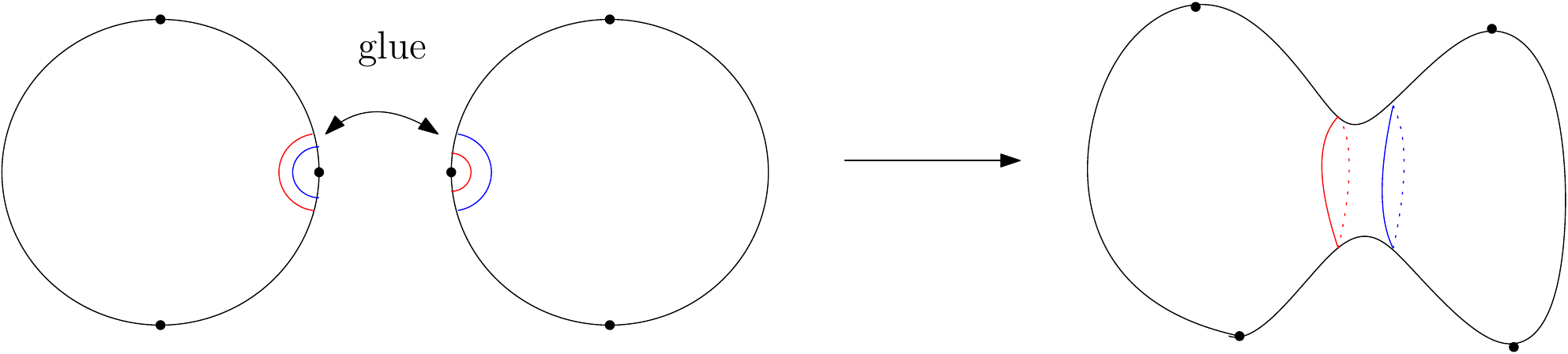}
\caption{\label{fig:glue}The gluing of two instances of the thrice-punctured sphere, producing a four-punctured sphere.}
\end{figure}

There is a geometric interpretation of equation \eqref{eq:bootstrap}. Indeed, one can produce a four-punctured sphere by gluing together two instances of the thrice-punctured sphere along annuli neighbourhoods of one puncture (see Figure \ref{fig:glue} and \cite{TV} for details of this procedure). The bootstrap equation is the CFT counterpart of this gluing procedure since the integrand is a product of DOZZ formulae. We will see in Section \ref{subsec:result} that the factorisation becomes exact in the $z\to0$ limit. The problem of factorisation of surfaces is an old one and was stressed by Seiberg (\cite{Sei} p336) as the most important open problem in Liouville CFT, at a time where the DOZZ formula was not yet known (nor even guessed). This paper gives a partial answer to the problem since we will show rigorously that the state factorises into two independent states as $z\to0$.
%
%The problem of factorisation also appears in the case of a general hyperbolic Riemann surface with no punctures. The extreme case of interest is the boundary of the moduli space, which consists in surfaces with nodes. When all simple closed geodesics have been pinched, the surface is splits into $2\g-2$ pairs of hyperbolic thrice-punctured spheres (with $\g$ the genus of the surface). We will adress this problem in a subsequent paper, where the geometry will be more involved.

Finally, let us briefly comment on the place of this work within the existing literature. The recent proof of the DOZZ formula \cite{KRV2} made an extensive use of the BPZ equation, a second order ODE satisfied by the correlation function $z\mapsto\langle V_{-\frac{\gamma}{2}}(z)V_{\alpha_1}(0)V_{\alpha_2}(1)V_{\alpha_3}(\infty)\rangle$, which was established in the earlier paper \cite{KRV1} and solved explicitly using hypergeometric functions. The reason why such an equation was expected to hold in the first place is that the representation of the Virasoro algebra associated to the field $V_{-\frac{\gamma}{2}}(\cdot)$ is expected to be degenerate, with a null vector at level two in the Verma module. This drastically simplifies the fusion rule for the fields $V_{-\frac{\gamma}{2}}(z)V_{\alpha_1}(0)$, and using the interpretation of Virasoro generators as differential operators, this leads to the second order BPZ equation. In this paper on the contrary, we study the general form of the fusion rule, for which the associated representation should not be degenerate in general, thus not leading to a differential equation. To our knowledge, there is no rigorous construction of representations of the Virasoro algebra in Liouville CFT yet, but there are works addressing the question and exploiting null vectors in the context of boundary CFT. For instance, it was shown in \cite{dubedat} that SLE partitions functions can be constructed from highest-weight representations of the Virasoro algebra. In general, some BPZ and Ward-type identities appear in SLE related martingales as the condition making the drift term in It\^o's formula vanish \cite{friedrich}.  %It may be possible to use the relationship between SLE and the GFF \cite{DMS} in order to get a better algebraic understanding of LCFT.

	%%%%%%%%%%%%%%%%%%%%%%%%%%%%%%%%%%%%%%%%%%%%%%%%%%%%%%%%%
	\subsection{Main results}
	\label{subsec:result}
Let $(\alpha_1,\alpha_2,\alpha_3,\alpha_4)$ be satisfying the Seiberg bounds \eqref{eq:seiberg_bounds}. In particular, this implies that either $\alpha_1+\alpha_2>Q$ or $\alpha_3+\alpha_4>Q$ (or both), and we assume without loss of generality that $\alpha_3+\alpha_4>Q$. Notice that these conditions are equivalent to having the Seiberg bounds being satisfied by $(\alpha_1,\alpha_2,Q)$ (with the exception of the $\alpha_3=Q$ saturation). 

Suppose for now that $\alpha_1+\alpha_2\geq Q$. Then equation \eqref{eq:bootstrap} is expected to hold, i.e. we should have

\begin{equation}
\begin{aligned}
\langle V_{\alpha_1}(z)V_{\alpha_2}(0)&V_{\alpha_3}(1)V_{\alpha_4}(\infty)\rangle^{\mathrm{cb}}=\frac{1}{8\pi}|z|^{2(\frac{Q^2}{4}-\Delta_1-\Delta_2)}\\
&\times\int_\R|z|^{\frac{P^2}{2}}C_\gamma(\alpha_1,\alpha_2,Q-iP)C_\gamma(Q+iP,\alpha_3,\alpha_4)|\mc{F}_{\gamma,P}^{\alpha_{1234}}(z)|^2dP
\end{aligned}
\end{equation}

At the geometrical level, we can produce a four-punctured sphere by gluing together two copies of the thrice-punctured sphere (see Figure \ref{fig:glue}) by picking one puncture on each sphere and identifying together annuli neighbourhoods of these punctures. The form of equation \eqref{eq:bootstrap} reveals this gluing construction: the four-point function is a factorisation of three-point functions.

Assume $\alpha_1+\alpha_2>Q$. Taking $\mc{F}_{\gamma,P}^{\alpha_{1234}}(z)\equiv1$ uniformly as $P\to0$, making the change of variable $P\mapsto P\sqrt{\log\frac{1}{|z|}}$, equation \eqref{eq:bootstrap} gives
\begin{equation}
\label{eq:bootstrap_limit}
\begin{aligned}
8\pi|z|^{2(\Delta_1+\Delta_2-\frac{Q}{4}^2)}&\langle V_{\alpha_1}(z)V_{\alpha_2}(0)V_{\alpha_3}(1)V_{\alpha_4}(\infty)\rangle^{\mathrm{cb}}\\
&=\int_{\R}|z|^{\frac{P^2}{2}}C_\gamma(\alpha_1,\alpha_2,Q-iP)C_\gamma(Q+iP,\alpha_3,\alpha_4)|\mc{F}_{\gamma,P}^{\alpha_{1234}}(z)|^2dP\\
&=\frac{1}{\sqrt{\log\frac{1}{|z|}}}\int_{\R}e^{-\frac{P^2}{2}}C_\gamma\left(\alpha_1,\alpha_2,Q-i\frac{P}{\sqrt{\log\frac{1}{|z|}}}\right)\\
&\qquad\qquad\qquad\times C_\gamma\left(Q+i\frac{P}{\sqrt{\log\frac{1}{|z|}}},\alpha_3,\alpha_4\right)\left|\mc{F}_{\gamma,\frac{P}{\sqrt{\log\frac{1}{|z|}}}}^{\alpha_{1234}}(z)\right|^2dP\\
&\underset{|z|\to0}{\sim}\left(\log\frac{1}{|z|}\right)^{-3/2}\partial_3C_\gamma(\alpha_1,\alpha_2,Q)\partial_1C_\gamma(Q,\alpha_3,\alpha_4)\int_\R P^2e^{-\frac{P^2}{2}}dP\\
&=\sqrt{2\pi}\left(\log\frac{1}{|z|}\right)^{-3/2}\partial_3C_\gamma(\alpha_1,\alpha_2,Q)\partial_1C_\gamma(Q,\alpha_3,\alpha_4)
\end{aligned}
\end{equation}
Hence
\begin{equation}
\label{eq:bootstrap_equiv}
\begin{aligned}
\langle V_{\alpha_1}(0)V_{\alpha_2}(z)V_{\alpha_3}(1)V_{\alpha_4}(\infty)\rangle^{\mathrm{cb}}\underset{z\to0}{\sim}\frac{|z|^{2(\frac{Q^2}{4}-\Delta_1-\Delta_2)}}{4\sqrt{2\pi}\log^{3/2}\frac{1}{|z|}}\partial_3C_\gamma(\alpha_1,\alpha_2,Q)\partial_1C_\gamma(Q,\alpha_3,\alpha_4)
\end{aligned}
\end{equation}

There are two important features in this asymptotic behaviour
\begin{itemize}
\item There is a $\left(\log\frac{1}{|z|}\right)^{-3/2}$ term correcting the polynomial rate $|z|^{2(\frac{Q^2}{4}-\Delta_1-\Delta_2)}$
\item The limit is expressed as a product of two derivative DOZZ formulae. Geometrically speaking, this means that we are sewing two instances of the thrice-punctured spheres, each one presenting a cusp at the $\alpha=Q$ singularity. The fact that we have a product means that we have two ``independent" surfaces.
\end{itemize}
%In Liouville CFT, the presence of the square-root of the modulus usually signifies the formation of a cusp and comes from the renormalisation of the $Q$-puncture operator \cite{DKRV2}. This amounts for the $\left(\log\frac{1}{|z|}\right)^{-1}$ term. The extra $\left(\log\frac{1}{|z|}\right)^{-1/2}$ term has a more subtle explanation. From a geometric point of view, this comes from the fact that in the limit where we have a disconnected pair of surfaces, the kernel of the Laplacian is 2-dimensional (spanned e.g. by the indicator functions on each surface). Intuitively this means that there is an eigenvalue tending to zero upon the splitting procedure. From the point of view of probability, this means that the choice of normalisation for the GFF is degenerating, i.e. there is a random variable whose law converges vaguely to Lebesgue measure when properly rescaled, producing an extra zero mode in the limit (see Section \ref{subsec:gff} for a definition).
\begin{figure}[h!]
\centering
\includegraphics[scale=0.6]{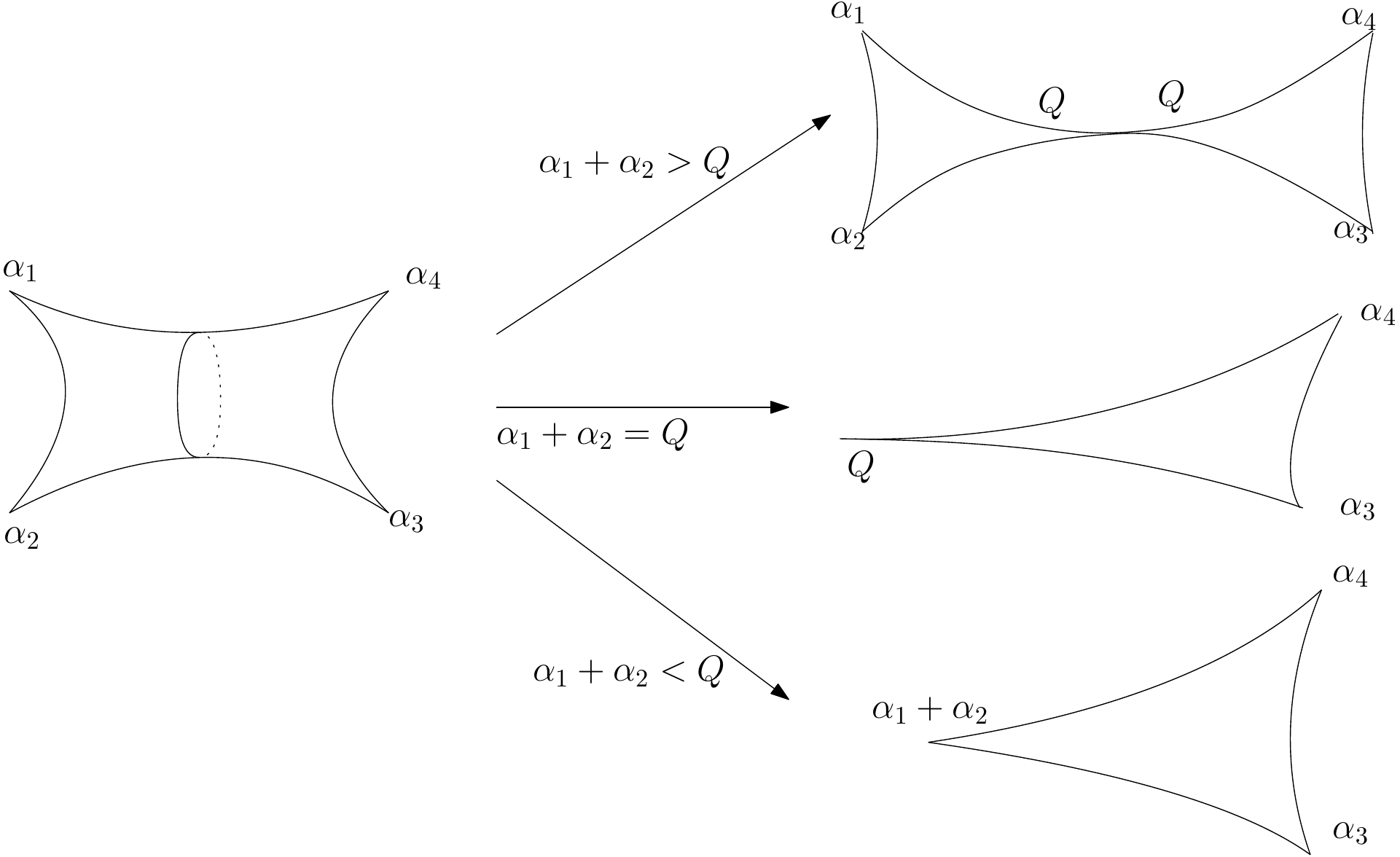}
\caption{\label{fig:three_regimes}The three different regimes depending on the sign of $\alpha_1+\alpha_2-Q$. \textbf{Up:} Case $\alpha_1+\alpha_2>Q$. The surface on the left is a four-punctured sphere with conical singularities of order $(\frac{\alpha_1}{Q},\frac{\alpha_2}{Q},\frac{\alpha_3}{Q},\frac{\alpha_4}{Q})$ at $(0,z,1,\infty)$. The limiting surface is a pair of thrice-punctured sphere: one with singularities $(\frac{\alpha_1}{Q},\frac{\alpha_2}{Q},1)$ at $(0,1,\infty)$ (the singularity at $\infty$ is a cusp), the other with singularities $(1,\frac{\alpha_3}{Q},\frac{\alpha_4}{Q})$ at $(0,1,\infty)$. \textbf{Middle:} Case $\alpha_1+\alpha_2=Q$. The limiting surface is a thrice-punctured sphere with singularities of order $(1,\frac{\alpha_3}{Q},\frac{\alpha_4}{Q})$ at $(0,1,\infty)$. \textbf{Bottom:} Case $\alpha_1+\alpha_2<Q$. The limiting surface is a thrice-punctured sphere with singularities $(\frac{\alpha_1+\alpha_2}{Q},\frac{\alpha_3}{Q},\frac{\alpha_4}{Q})$ at $(0,1,\infty)$.}
\end{figure}
In the case $\alpha_1+\alpha_2=Q$, the computation of Appendix \ref{app:dozz} shows that:
\begin{equation}
\label{eq:dozz_limit}
\underset{P\to0}{\lim}C_\gamma(\alpha_1,\alpha_2,Q-iP)C_\gamma(Q+iP,\alpha_3,\alpha_4)=-4\partial_1C_\gamma(Q,\alpha_3,\alpha_4)
\end{equation}

Going back to the bootstrap equation and noticing that $2(\frac{Q^2}{4}-\Delta_1-\Delta_2)=-\alpha_1\alpha_2$, we can apply the same change of variables as in \eqref{eq:bootstrap_limit}, and get in this case
\begin{equation}
\label{eq:bootstrap_limit_critical}
\begin{aligned}
\langle V_{\alpha_1}(z)V_{\alpha_2}(0)V_{\alpha_3}(1)V_{\alpha_4}(\infty)\rangle^{\mathrm{cb}}
&\underset{z\to0}{\sim}-\frac{|z|^{-\alpha_1\alpha_2}}{2\pi\sqrt{\log\frac{1}{|z|}}}\partial_1C_\gamma(Q,\alpha_3,\alpha_4)\int_\R e^{-\frac{P^2}{2}}dP\\
&=-\frac{1}{\sqrt{2\pi}}\frac{|z|^{-\alpha_1\alpha_2}}{\log^{1/2}\frac{1}{|z|}}\partial_1C_\gamma(Q,\alpha_3,\alpha_4)
\end{aligned}
\end{equation}
Again, let us notice two important features of this asymptotic behaviour
\begin{itemize}
\item There is a $\left(\log\frac{1}{|z|}\right)^{-1/2}$ correction term to be compared with the power $-3/2$ found in the supercritical case $\alpha_1+\alpha_2>Q$ in \eqref{eq:bootstrap_limit}. This is explained by the fact that there is only one cusp and one limiting surface (so no extra zero mode).
\item The limit is expressed with only one derivative DOZZ block, to be compared with the product found in \eqref{eq:bootstrap_limit}. Intuitively, this means that in this critical case $\alpha_1+\alpha_2=Q$, we see only one surface with two conical singularities and one cusp. 
\end{itemize}

Finally we turn to the case $\alpha_1+\alpha_2<Q$. In this case, equation \eqref{eq:bootstrap} does not hold in this form and there is a need for ``discrete corrections" (see \cite{BZ} section 8 for a thorough discussion of the phenomenon). This is linked with the fact that the contour of integration in \eqref{eq:bootstrap} includes poles of the DOZZ formula, and the discrete corrections are merely residues. 
%%In the end, one has to add to the right-hand side of \eqref{eq:bootstrap} the following terms
%\begin{equation}
%\label{eq:bootstrap_sub_critical}
%\sum_{\alpha\in(0,Q)\cap(\frac{\gamma}{2}\N+\frac{2}{\gamma}\N)}|z|^{-\alpha_1\alpha_2}|1-z|^{-\alpha_2\alpha_3}C_\gamma(\alpha_1+\alpha_2+\alpha,\alpha_3,\alpha_4)
%\end{equation}
In particular, the leading order as $z\to0$ is simply
\[\langle V_{\alpha_1}(0)V_{\alpha_2}(z)V_{\alpha_3}(1)V_{\alpha_4}(\infty)\rangle^\mathrm{cb}\underset{z\to\infty}{\sim}|z|^{-\alpha_1\alpha_2}C_\gamma(\alpha_1+\alpha_2,\alpha_3,\alpha_4)\]
so that the geometric interpretation is that the two singularities add up together. This makes sense since $(\alpha_1+\alpha_2,\alpha_3,\alpha_4)$ satisfies the Seiberg bounds. In this last case, the spectrum is ``hidden" behind the discrete leading-order terms. In order to see the spectrum in our probabilistic framework, one would need to push the asymptotic expansion further. It should be possible to do so using similar techniques as in \cite{KRV2} (section 6) but we restrict ourselves to the leading order for now.

\begin{theorem}
\label{thm:result}
Let $(\alpha_1,\alpha_2,\alpha_3,\alpha_4)$ satisfying the Seiberg bounds and such that $\alpha_3+\alpha_4>Q$. The asymptotic behaviour as $z\to0$ of the correlation function $\langle V_{\alpha_1}(0)V_{\alpha_2}(z)V_{\alpha_3}(1)V_{\alpha_4}(\infty)\rangle$ depends on the sign of $\alpha_1+\alpha_2-Q$ and is described by the following three cases.
\begin{enumerate}
\item \emph{Supercritical case:} 

If $\alpha_1+\alpha_2>Q$, then
\begin{equation}
\label{eq:larger}
\langle V_{\alpha_1}(0)V_{\alpha_2}(z)V_{\alpha_3}(1)V_{\alpha_4}(\infty)\rangle\underset{z\to0}{\sim}\frac{1}{4\sqrt{2\pi}}\frac{|z|^{2(\frac{Q^2}{4}-\Delta_1-\Delta_2)}}{\log^{3/2}\frac{1}{|z|}}\partial_3C_\gamma(\alpha_1,\alpha_2,Q)\partial_1C_\gamma(Q,\alpha_3,\alpha_4)
\end{equation}
\item \emph{Critical case:}

If $\alpha_1+\alpha_2=Q$, then
\begin{equation}
\label{eq:equal}
\langle V_{\alpha_1}(0)V_{\alpha_2}(z)V_{\alpha_3}(1)V_{\alpha_4}(\infty)\rangle\underset{z\to0}{\sim}-\frac{1}{\sqrt{2\pi}}\frac{|z|^{-\alpha_1\alpha_2}}{\log^{1/2}\frac{1}{|z|}}\partial_1C_\gamma(Q,\alpha_3,\alpha_4)
\end{equation}
\item \emph{Subcritical case\footnote{This was already proved in \cite{KRV2} section 6.1 and essentially follows from dominated convergence.}:}

If $\alpha_1+\alpha_2<Q$, then
\begin{equation}
\label{eq:lower}
\langle V_{\alpha_1}(0)V_{\alpha_2}(z)V_{\alpha_3}(1)V_{\alpha_4}(\infty)\rangle\underset{z\to0}{\sim}|z|^{-\alpha_1\alpha_2}C_\gamma(\alpha_1+\alpha_2,\alpha_3,\alpha_4)
\end{equation}
\end{enumerate}
\end{theorem}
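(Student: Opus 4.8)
The plan is to read off the asymptotics from the negative moment of Gaussian Multiplicative Chaos appearing in \eqref{eq:def_correl_intro}. Writing $s:=\frac{Q\sigma}{\gamma}$, $\beta:=\alpha_1+\alpha_2$ and $t:=\log\frac{1}{|z|}$, and noting that the explicit prefactor $|z|^{-\alpha_1\alpha_2}|1-z|^{-\alpha_2\alpha_3}$ behaves like $|z|^{-\alpha_1\alpha_2}(1+o(1))$, everything reduces to the behaviour as $z\to0$ of
\[
\mathcal{I}(z):=\E\left[\left(\int_{\widehat{\C}}e^{\gamma\sum_{i=1}^4\alpha_iG(z_i,\cdot)}dM^\gamma\right)^{-s}\right],\qquad (z_1,z_2,z_3,z_4)=(0,z,1,\infty).
\]
The first step is to localise the GMC integral into an inner part over $B(0,\delta)$ and an outer part over $\widehat{\C}\setminus B(0,\delta)$, for a small fixed $\delta$. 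On the outer region the integrand converges, as $z\to0$, to the one carrying a single \emph{merged} insertion of charge $\beta=\alpha_1+\alpha_2$ at the origin, since $|w|^{-\gamma\alpha_1}|w-z|^{-\gamma\alpha_2}\to|w|^{-\gamma\beta}$ uniformly on $|w|\geq\delta$, while the insertions at $1$ and $\infty$ are inert in the limit. The whole question is therefore whether the merged singularity $|w|^{-\gamma\beta}$ is integrable against $M^\gamma$ near the origin, which happens precisely when $\beta<Q$ — exactly the trichotomy in the statement, and exactly the first Seiberg bound applied to the triple $(\alpha_1,\alpha_2,Q)$.

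In the subcritical case $\beta<Q$ the inner integral stays finite and $\int_{\widehat{\C}}e^{\gamma\sum\alpha_iG(z_i,\cdot)}dM^\gamma$ converges in probability to the GMC integral of the three insertions $(\beta,\alpha_3,\alpha_4)$ at $(0,1,\infty)$. Uniform negative-moment bounds (available since $(\beta,\alpha_3,\alpha_4)$ still satisfies the Seiberg bounds) let me pass to the limit by dominated convergence, turning $\mathcal{I}(z)$ together with the prefactors into the DOZZ constant $C_\gamma(\beta,\alpha_3,\alpha_4)$ and giving \eqref{eq:lower}; this is the argument already present in \cite{KRV2}. The delicate cases are $\beta\geq Q$, where the inner mass diverges as $z\to0$, so that the integrand tends to $+\infty$ and $\mathcal{I}(z)\to0$; the content of the theorem is the precise rate.

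To capture this rate I would use the radial decomposition of the GFF around the origin: the circle averages $u\mapsto B_u$ on circles of radius $e^{-u}$, $u\in[\log\frac1\delta,t]$, form (up to a harmonic correction) a Brownian motion of length $\approx t$, independent of the finer angular fluctuations, and the scaling relation for $M^\gamma$ expresses the inner mass as a sum over scales weighted by $e^{\gamma B_u+\gamma(\beta-Q)u}$ times independent unit-annulus chaos. A Girsanov shift absorbs the merged insertion as a drift, and $\mathcal{I}(z)$ is then controlled by the Gaussian fluctuations of this coarse field, whose typical slope making the insertion critical is exactly the Seiberg value $Q$. Rescaling the endpoint deviation by $\sqrt{t}$ produces a Gaussian integral $\int_\R(\cdots)e^{-P^2/2}\,dP$ whose prefactor carries the power of $t=\log\frac1{|z|}$, in exact parallel with \eqref{eq:bootstrap_limit}, with $P$ playing the role of the spectral parameter near the base $P=0$ of $Q+i\R$. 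At charge $Q$ the DOZZ constant degenerates: as a charge reaches $Q$ the corresponding Seiberg bound saturates and $C_\gamma$ vanishes linearly, so the finite-$z$ regularisation selects its first derivative. In the supercritical case $\beta>Q$ \emph{both} fusing charges must reach $Q$, producing a double vanishing and hence the $P^2$ weight, the power $t^{-3/2}$, and the product $\partial_3C_\gamma(\alpha_1,\alpha_2,Q)\,\partial_1C_\gamma(Q,\alpha_3,\alpha_4)$ reflecting the asymptotic decoupling of inner and outer regions into two independent thrice-punctured spheres; in the critical case $\beta=Q$ the saturation of a Seiberg bound produces a compensating pole, so that by \eqref{eq:dozz_limit} only the single derivative $\partial_1C_\gamma(Q,\alpha_3,\alpha_4)$ survives, with the $P^0$ weight and the power $t^{-1/2}$, as in \eqref{eq:bootstrap_limit_critical}.

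The main obstacle is precisely this last step: converting the heuristic radial/Girsanov picture into sharp two-sided control of the negative GMC moment at and above the critical charge, with the exact constant. Concretely I expect the difficulty to lie in (i) proving the asymptotic independence (factorisation) of the inner and outer contributions uniformly in the Gaussian coarse field, so that the limit is genuinely a \emph{product} of two DOZZ derivatives; (ii) establishing uniform negative-moment estimates for GMC with a near-critical insertion, in order to justify the rescaling of the coarse field and the dominated passage to the Gaussian integral; and (iii) matching the resulting constant with the residue/derivative structure of the DOZZ formula from Appendix \ref{app:dozz}. The subcritical case is routine by comparison, and the exact identification of the limit in the Liouville case is what forces the use of the integrability of DOZZ rather than a soft convergence argument.
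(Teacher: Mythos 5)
Your outline follows the same route as the paper's proof: reduction to a negative GMC moment, radial (circle-average) decomposition so that the inner mass is governed by a Brownian motion of length $t=\log\frac{1}{|z|}$ with drift $\alpha_1+\alpha_2-Q$, a Cameron--Martin shift to remove that drift (producing the polynomial factor $|z|^{2(\frac{Q^2}{4}-\Delta_1-\Delta_2)}$), a $\sqrt{t}$-rescaling of the endpoint $B_t$ yielding a Gaussian integral and the splitting into two asymptotically independent pieces, and dominated convergence in the subcritical case. So the strategy is the right one.

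However, the three items you flag at the end as ``the main obstacle'' are precisely the content of the proof, and your proposed resolution of them is an appeal to the bootstrap heuristic that the theorem is supposed to verify, so there is a genuine gap. Concretely: (a) the powers $t^{-3/2}$ and $t^{-1/2}$ do not come from a ``$P^2$ versus $P^0$ weight''; probabilistically they come from conditioning the radial process to stay below a level $b$. In the supercritical case, after the Girsanov shift and the decomposition $B_s=Br_s+\frac{\delta}{\sqrt{t}}s$ into a Brownian bridge plus a random linear drift, the barrier event for the bridge has probability $\sim 2b^2/t$, which combines with the $t^{-1/2}$ from the Gaussian density of the rescaled endpoint to give $t^{-3/2}$; in the critical case one conditions the Brownian motion itself, with probability $\sim\sqrt{2/\pi}\,b/\sqrt{t}$, giving $t^{-1/2}$. (b) Conditioned on the barrier event the radial process is asymptotically a BES$_b$(3)-process, and this is what makes the inner and outer GMC masses converge to nondegenerate limits $L_\infty,R_\infty$ and factorise; the decoupling of the lateral noise must be controlled by the Green-function estimate $H=O(e^{-t/2})$ together with Kahane's convexity inequality, none of which appears in your plan. (c) The identification of the limiting constants as $-\frac14\partial_1C_\gamma(Q,\alpha_3,\alpha_4)$ and $-\frac14\partial_3C_\gamma(\alpha_1,\alpha_2,Q)$ does not follow from the abstract statement that ``$C_\gamma$ vanishes linearly at charge $Q$''; it requires showing that $b\,\E_b[\exp(-\mu e^{\gamma v}L_\infty)]$ has a nontrivial limit as $b\to\infty$ and that this limit equals $\lim_{\alpha\to Q}C_\gamma(\alpha,\alpha_3,\alpha_4)/(\alpha-Q)$ up to an explicit constant --- inputs the paper imports from \cite{DKRV2} and \cite{Bav} --- together with a uniform-in-$t$ estimate justifying the exchange of the $b\to\infty$ and $t\to\infty$ limits. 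Without these ingredients your argument identifies neither the rate nor the constant, which is the whole point of the theorem.
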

The different regimes appearing in the statement of Theorem \ref{thm:result} have a natural geometric explanation (see Figure \ref{fig:three_regimes} for an illustration of the phenomenon). First, notice that the condition $\alpha_3+\alpha_4-Q>0$ corresponds to having the Seiberg bounds satisfied for $(Q,\alpha_3,\alpha_4)$, except that the first coefficient saturates the second bound. When $\alpha_1+\alpha_2<Q$, the two singularities add up and the limit is non-trivial. When $\alpha_1+\alpha_2=Q$, the second Seiberg bound is saturated and it is natural \cite{DKRV2,Bav} to expect the factor $(\log\frac{1}{|z|})^{-1/2}\partial_1C_\gamma(Q,\alpha_3,\alpha_4)$ since the $0^{\mathrm{th}}$ order is trivial in this case. When $\alpha_1+\alpha_2-Q>0$, this also explains the factor $(\log\frac{1}{|z|})^{-1}\partial_3C_\gamma(\alpha_1,\alpha_2,Q)\partial_1C_\gamma(Q,\alpha_3,\alpha_4)$. The extra $(\log\frac{1}{|z|})^{-1/2}$ term has a more subtle origin. Since both $(\alpha_1,\alpha_2,Q)$ and $(Q,\alpha_3,\alpha_4)$ satisfy the Seiberg bounds, we expect to see the two spheres split and form a disconnected pair of surfaces in the limit. In this limit, the GFF should have two zero modes (given e.g. by the mean on each independent surface). Roughly speaking, upon splitting, the mean on the right surface conditioned on the mean on the total surface is a Gaussian random variable with large variance which -- when properly rescaled -- produces the extra zero mode. This rescaling explains the extra $(\log\frac{1}{|z|})^{-1/2}$ term appearing in \eqref{eq:larger}.

% As mentioned in Section \ref{subsec:path_integral}, LCFT gives an exact formula only for one special moment of GMC. However, the procedure described in the previous paragraph is quite general and enables to compute rate functions for all moments of GMC by promoting $\sigma$ to a free parameter. Even though we don't have exact formulae for the limit, we found a probabilistic representations in terms of the expected value of a random functional. This representation uses the radial/angular decomposition of the GFF on the infinite cylinder (see Section \ref{subsec:gff} for a definition of this concept).

Theorem \ref{thm:result} can be equivalently reformulated in terms of GMC. Since our proof does not depend on the particular choice of $(-\frac{Q\sigma}{\gamma})$-moment in the  four-point correlation, we may promote $\sigma$ to a free parameter and study fusion estimates for arbitrary negative moments of GMC that could be of independent interest. We first record the decay rate in the theorem below.

%Actually, point (3) of Theorem \ref{thm:result} has already been treated in \cite{KRV2}, section 6.1, where it is referred as ``fusion without reflection".  The authors consider only the case $\alpha_2=-\frac{\gamma}{2}$ (because this is the framework needed to implement the BPZ machinery) but the argument showing that the negative moments of GMC are continuous as $z\to0$ is valid for any $\alpha_1,\alpha_2\in(0,Q)$ such that $\alpha_1+\alpha_2<Q$. Thus we will only prove points (1) and (2) of Theorem \ref{thm:result}. We will do so in the Section \ref{sec:proof} while Section \ref{sec:background} gives the necessary background.

	%%%%%%%%%%%%%%%%%%%%%%%%%%%%%%%%%%%%%%%%%%%%%%%%%%%%%%%
%	\subsection{Equivalent formulation in terms of multiplicative chaos}
%	\label{subsec:result_gmc}
%We reformulate Theorem \ref{thm:result} in terms of negative moments of GMC, and provide a small generalisation below by promoting $\sigma$ to an independent parameter.
\begin{theorem} \label{theo:result_gmc}
Let $\kappa > 0$, $\gamma \in (0, 2)$ and $(\alpha_1, \alpha_2, \alpha_3, \alpha_4) \in \R_+^4$ be such that the Seiberg bound is satisfied. Also let $(z_1, z_2, z_3, z_4) = (0, z, 1, \infty)$ with $z \in \C \setminus \{0\}$. Then there exists some constant $E_{\kappa}^{\gamma}(\alpha_1, \alpha_2, \alpha_3, \alpha_4) > 0$ such that
\begin{align} \label{eq:gmc_const}
\lim_{z \to 0} \frac{1}{I_{\alpha_1 +\alpha_2}^{\gamma, \kappa}(z)} \E\left[ M^\gamma \left(e^{\gamma\sum_{j=1}^4 \alpha_j G(z_j, \cdot)}\right)^{-\kappa} \right] = E_{\kappa}^{\gamma}(\alpha_1, \alpha_2, \alpha_3, \alpha_4)
\end{align}

\noindent where the rate function $I_{\alpha}^{\gamma, \kappa}$ is given by
\begin{align*}
I_{\alpha}^{\gamma, \kappa} (z)
= \begin{cases}
1 & \alpha - Q < 0,\\
\sqrt{\log \frac{1}{|z|}} & \alpha - Q = 0, \\
|z|^{\frac{(\alpha- Q)^2}{2}} \left(\log \frac{1}{|z|}\right)^{3/2}  & \alpha -Q \in (0, \kappa \gamma) , \\
|z|^{\frac{(\alpha - Q)^2}{2}} \sqrt{\log \frac{1}{|z|}} & \alpha  -Q = \kappa \gamma , \\
|z|^{\frac{(\alpha - Q)^2}{2} - \frac{(\kappa \gamma - (\alpha - Q))^2}{2}} & \alpha  -Q > \kappa \gamma .
\end{cases}
\end{align*}

\end{theorem}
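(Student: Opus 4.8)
The plan is to localise the analysis to the fusion region near the origin, where the insertions at $z_1=0$ and $z_2=z$ collide, and to reduce the negative moment of the GMC integral to a one-dimensional Brownian functional truncated at the fusion scale $T:=\log\frac{1}{|z|}$. Writing $Z:=M^\gamma(e^{\gamma\sum_{j=1}^4\alpha_j G(z_j,\cdot)})$ and $\alpha:=\alpha_1+\alpha_2$, I would first split $Z$ into a bulk part, supported away from the origin and bounded below by a positive $z$-independent random variable, and a fusion part supported on $\{|w|\le\frac12\}$. On the fusion part I perform the radial decomposition of the GFF into its circle average $B_s$ (a Brownian motion in the scale variable $s=\log\frac{1}{|w|}$) and an independent lateral field; using $G(z_j,w)\sim\log\frac{1}{|w|}$ for $|w|\gg|z|$, together with the fact that the mass of the annulus at scale $e^{-s}$ scales like $e^{\gamma B_s-\gamma Q s}$ times a stationary lateral factor $\rho_s>0$, this identifies the dominant contribution as
\[
Z\;\approx\;Z_{\mathrm{bulk}}+\int_0^{T}e^{\gamma\left(B_s+(\alpha-Q)s\right)}\rho_s\,ds,
\]
where the truncation at $s=T$ reflects the crossover from the combined charge $\alpha$ (for $|w|\gg|z|$) to the two separated charges $\alpha_1,\alpha_2$ (for $|w|\ll|z|$), the latter contributing only an $O(1)$ tail since each $\alpha_i<Q$.

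With this reduction I compute $\E[Z^{-\kappa}]$ by a Girsanov tilt of the radial Brownian motion. Tilting $B$ to acquire drift $\nu$ costs $e^{-\nu^2T/2}=|z|^{\nu^2/2}$ and turns $B_s+(\alpha-Q)s$ into a path of effective drift $\mu=\nu+(\alpha-Q)$; on such a path the fusion integral is of order $e^{\gamma\mu T}$ when $\mu>0$ and of order $1$ when $\mu\le0$, so the payoff $Z^{-\kappa}$ is of order $e^{-\kappa\gamma\mu T}$ or $O(1)$ respectively. Optimising the exponent $\tfrac{\nu^2}{2}+\kappa\gamma\,(\nu+(\alpha-Q))_+$ over $\nu$ gives the optimal drift $\mu^\ast=(\alpha-Q)-\kappa\gamma$, which is exactly the origin of the trichotomy. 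When $\alpha-Q>\kappa\gamma$ the optimiser is interior and non-degenerate, and one reads off the clean rate $|z|^{\frac{(\alpha-Q)^2}{2}-\frac{(\kappa\gamma-(\alpha-Q))^2}{2}}$ with no logarithmic correction. When $\alpha-Q\in(0,\kappa\gamma)$ the constrained optimum sits at the corner $\mu=0$ (i.e. $\nu=-(\alpha-Q)$), producing the leading rate $|z|^{\frac{(\alpha-Q)^2}{2}}$. When $\alpha<Q$ the fusion integral converges as $T\to\infty$, so $Z\to Z_\infty$ and $\E[Z^{-\kappa}]\to\E[Z_\infty^{-\kappa}]$ is a finite constant, i.e. $I\equiv1$.

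The remaining and most delicate task is to extract the subleading polynomial-in-$\log$ factors in the boundary regimes $\alpha-Q\in(0,\kappa\gamma]$ and the critical case $\alpha=Q$. Here the soft large-deviation argument gives only the exponential rate, and one must analyse sharply the negative moments of the near-critical, driftless functional $\int_0^{T}e^{\gamma B_s}\rho_s\,ds$. I would do this through the event that the radial motion stays low up to scale $T$, which brings in the hitting-time asymptotics $\P(\max_{s\le T}B_s\le M)\asymp T^{-1/2}$; optimising over the barrier height $M$ and accounting for the $\kappa$-dependent weight then produces the powers of $\log\frac{1}{|z|}$ recorded in the statement, with the transition between the $\tfrac12$ and $\tfrac32$ exponents corresponding to whether the corner $\mu=0$ is a degenerate ($\alpha-Q=\kappa\gamma$) or non-degenerate ($\alpha-Q\in(0,\kappa\gamma)$) boundary minimum. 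The constant $E_\kappa^\gamma$ is then identified with the expectation of the corresponding limiting functional, yielding the probabilistic representation.

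The main obstacle I anticipate is twofold. First, one must justify the reduction to the radial Brownian functional uniformly in $z$, controlling the lateral GMC mass $\rho_s$ and the error between $G$ and its logarithmic approximation, and in particular gluing the sub-$T$ and super-$T$ scale contributions cleanly at the fusion scale. Second, and more seriously, the sharp boundary-Laplace analysis in the critical cases is the crux: separating the exact polynomial prefactor from the exponential rate requires precise control of the lower tail of $\int_0^{T}e^{\gamma B_s}\rho_s\,ds$, rather than the soft estimates that suffice in the interior regime $\alpha-Q>\kappa\gamma$. I expect this critical analysis, together with the uniform integrability needed to upgrade convergence of rates into convergence of the renormalised moments, to be where the bulk of the technical work lies.
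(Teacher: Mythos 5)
Your plan is, in outline, the paper's own: pass to cylinder coordinates, reduce the negative moment to an exponential functional of the drifted radial Brownian motion over the window $[0,T]$ with $T=\log\frac{1}{|z|}$, use a Girsanov/Cameron--Martin tilt to locate the optimal drift, and extract the logarithmic corrections from barrier events $\{\sup_{s\le T}B_s\le b\}$ via first-passage asymptotics and $\mathrm{BES}(3)$ limits. Your energy--entropy optimisation of $\tfrac{\nu^2}{2}+\kappa\gamma(\nu+(\alpha-Q))_+$ reproduces exactly the paper's exponents in all five regimes (interior optimum for $\alpha-Q>\kappa\gamma$, corner at $\mu=0$ otherwise), and your identification of the degenerate versus non-degenerate corner as the source of the $1/2$ versus $3/2$ powers of $\log\frac{1}{|z|}$ is the same mechanism the paper exploits; the obstacles you flag (lateral noise control, exchange of the $b$ and $T$ limits) are precisely where the paper invokes Kahane's inequality and the uniform estimates of [DKRV2].

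There is, however, one concrete gap. Your reduction truncates the fusion integral at $s=T$ and dismisses the scales $|w|\lesssim|z|$ as ``an $O(1)$ tail''. That mass is of order $e^{\gamma B_T}$ times an independent $O(1)$ functional --- the term $C_2e^{\gamma B_t^\lambda}$ in the paper's toy model \eqref{eq:idea1}, i.e.\ the right part $R_t$ of $Z_t$ --- and it cannot be dropped when $\alpha_1+\alpha_2>Q$. After the tilt, the expectation is dominated by paths whose terminal value $B_T$ is $O(1)$ (the reweighting $e^{(\alpha_1+\alpha_2-Q)B_T}$ pins it there), and on that event the discarded mass is comparable to the retained one, so omitting it changes the value of the limit; worse, in the regimes $\alpha-Q\ge\kappa\gamma$ it is the discarded part, not the $[0,T]$ integral, that survives as $T\to\infty$ (compare \eqref{eq:prob_rep2_3} and \eqref{eq:prob_rep2_4}, where the limiting constant is a moment of $F_{\alpha_2,\alpha_1}$, built entirely from the field at scales below $|z|$, and the analysis must be carried out viewing the path from its endpoint). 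The fix is to carry the terminal term along as a second, asymptotically independent factor, as the paper does with the pair $(L_t,R_t)$; once that is done, your barrier analysis does deliver the stated rates, with the extra half-power of $\log\frac{1}{|z|}$ in the $3/2$ regime arising from the Gaussian rescaling of $B_T/\sqrt{T}$ (the emergent zero mode), exactly as in \eqref{eq:lim_b}.
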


As mentioned in Section \ref{subsec:path_integral}, LCFT gives an exact expression for $E_{\kappa}^{\gamma}(\alpha_1, \alpha_2, \alpha_3, \alpha_4)$ in terms of the DOZZ formula when $\kappa = \frac{\sum_{i=1}^4 \alpha_i - 2Q}{\gamma}$. While this is not the case in general, we can still provide a probabilistic representation of the constant based on the radial/angular decomposition of the GFF on the infinite cylinder (see Section \ref{subsec:gff}). For this it is useful to introduce the random functional
\begin{equation}\label{eq:functional}
\begin{split}
F_{a_1, a_2}(u, f(\cdot))
& = e^{-\gamma u} \int_{|x| \ge 1} \frac{dM^\gamma(x)}{|x|^{4 - \gamma (a_1 + a_2)} |x-1|^{\gamma a_1}}
+\int_{\R_{s \ge 0} \times \S^1_\theta}e^{-\gamma(f(s) - a_1G(1, e^{-s - i\theta}))}d\widehat{M}^\gamma(s,\theta)\\
& = \int_{\mathcal{C}_\infty}e^{\gamma \left( (-u + B_s + (Q- a_2)s) 1_{\{s \le 0\}} - f(s)1_{\{s \ge 0\}} + a_1 G(1, e^{-s - i\theta}) \right)}d\widehat{M}^\gamma(s,\theta)
\end{split}
\end{equation}

\noindent where $(B_{-s})_{s \ge 0}$ is a Brownian motion independent of the GMC $d\widehat{M}^{\gamma}(s, \theta)$ associated with the lateral noise of GFF (see Lemma \ref{lem:radial_angular}). We will also write $(\widetilde{\beta}_{s}^u)_{s \ge 0}$ to denote a $\widetilde{\mathrm{BES}}_u(3)$-process (see Definition \ref{defi:con_Bessel}).

\begin{theorem}\label{theo:prob_rep1}
Let $\alpha_1 + \alpha_2 - Q \ge 0$. The constant $E_{\kappa}^{\gamma}(\alpha_1, \alpha_2, \alpha_3, \alpha_4)$ in \eqref{eq:gmc_const} has the following probabilistic representations.

\begin{itemize}[leftmargin=*]
\item If $\alpha_1 + \alpha_2 - Q = 0$, then
\begin{align}\label{eq:prob_rep2_1}
E_{\kappa}^{\gamma}(\alpha_1,  \alpha_2, \alpha_3, \alpha_4)
= \frac{1}{\kappa \gamma} \sqrt{\frac{2}{\pi}} \E\left[\left(F_{\alpha_3, \alpha_4}(\tau, \widetilde{\beta}_{\cdot}^{\tau})\right)^{-\kappa} \right]
\end{align}

\noindent where $\tau \sim \mathrm{Exp}(\kappa \gamma)$.

\item If $\alpha_1 + \alpha_2 - Q \in (0, \kappa \gamma)$, then 
\begin{align}
& E_{\kappa}^{\gamma}(\alpha_1, \alpha_2, \alpha_3, \alpha_4)
\notag = \frac{1}{\gamma}
\frac{B\left(\frac{\alpha_1 + \alpha_2 - Q}{\gamma}, \kappa - \frac{\alpha_1 + \alpha_2 - Q}{\gamma} \right) }{(\alpha_1 + \alpha_2 - Q)(\kappa \gamma - (\alpha_1 + \alpha_2 - Q))}
\sqrt{\frac{2}{\pi}}\\
\label{eq:prob_rep2_2}
& \qquad  \times
\E \left[\left(F_{\alpha_3, \alpha_4}(\tau, \widetilde{\beta}_{\cdot}^\tau)\right)^{-(\kappa - \frac{\alpha_1 + \alpha_2 - Q}{\gamma})}  \right]
\E \left[\left(F_{\alpha_2, \alpha_1}(\mathcal{T}, \widetilde{\beta}_{\cdot}^\mathcal{T})\right)^{-\frac{\alpha_1 + \alpha_2 - Q}{\gamma}}  \right]
\end{align}

\noindent where $\tau \sim \mathrm{Exp}(\kappa \gamma - (\alpha_1 + \alpha_2 - Q))$, $\mathcal{T} \sim \mathrm{Exp}(\alpha_1 + \alpha_2 - Q)$  and $B(x, y) = \frac{\Gamma(x)\Gamma(y)}{\Gamma(x+y)}$.

\item If $\alpha_1 + \alpha_2 - Q = \kappa \gamma$, then
\begin{align}\label{eq:prob_rep2_3}
E_{\kappa}^{\gamma}(\alpha_1, \alpha_2, \alpha_3, \alpha_4)
= \frac{1}{\kappa \gamma} \sqrt{\frac{2}{\pi}} \E\left[\left(F_{\alpha_2, \alpha_1}(\mathcal{T}, \widetilde{\beta}_{\cdot}^{\mathcal{T}})\right)^{-\kappa} \right].
\end{align}

\noindent where $\mathcal{T} \sim \mathrm{Exp}(\kappa \gamma)$.

\item If $\alpha_1 + \alpha_2 - Q > \kappa \gamma$, then
\begin{align}
\label{eq:prob_rep2_4}
E_{\kappa}^{\gamma}(\alpha_1, \alpha_2, \alpha_3, \alpha_4)
& = \E \left[ \left( F_{\alpha_2, \alpha_1}(0, -B_{\cdot}^{-(\alpha_1 + \alpha_2 - Q - \kappa\gamma)})\right)^{-\kappa}\right]
\end{align}

\noindent where $(B_s^{-(\alpha_1 + \alpha_2 - Q - \kappa \gamma)})_{s \ge 0}$ is a Brownian motion with negative drift $-(\alpha_1 + \alpha_2 - Q - \kappa \gamma)$.
\end{itemize}

\end{theorem}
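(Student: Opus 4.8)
The plan is to upgrade the proof of Theorem~\ref{theo:result_gmc} so that it tracks the limiting constant, and to read off the probabilistic representation from the structure of the dominant contribution. The starting point is the radial/angular decomposition of Lemma~\ref{lem:radial_angular}: mapping the sphere to the cylinder $\mathcal{C}_\infty$ via $z\mapsto e^{-s-i\theta}$, the field splits into an independent sum of a two-sided radial Brownian motion $B_s$ and a lateral part generating $\widehat M^\gamma$, and the fusion $z\to0$ becomes a neck of length $T=\log\frac1{|z|}\to\infty$. On this neck the mass $Y=\int e^{\gamma\sum_j\alpha_jG(z_j,\cdot)}dM^\gamma$ is, up to the lateral noise, the integral of $e^{\gamma M_s}$ with radial profile $M_s=B_s+(\alpha_1+\alpha_2-Q)s$, the linear drift being produced by the two merging insertions against the background charge $Q$. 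The sign of $\alpha_1+\alpha_2-Q$, and its comparison with $\kappa\gamma$, is exactly what separates the regimes.

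Since we are in the regime $\alpha_1+\alpha_2-Q\ge0$, the drift is nonnegative and $Y$ typically blows up as $T\to\infty$; the negative moment $\E[Y^{-\kappa}]$ therefore probes the lower tail of $Y$, i.e.\ atypically low profiles. Writing $\mathcal M=\max_{[0,T]}M_s$ and factoring $Y=e^{\gamma\mathcal M}S$ with an order-one shape functional $S$, one has
\[
\E\!\left[Y^{-\kappa}\right]\;\approx\;\E\!\left[e^{-\kappa\gamma\mathcal M}\,S^{-\kappa}\right],
\]
so that the exponential-in-$T$ part of the rate, $|z|^{(\alpha_1+\alpha_2-Q)^2/2}=e^{-T(\alpha_1+\alpha_2-Q)^2/2}$, comes from the Gaussian large-deviation cost of keeping $M_s$ low across the neck, while the polylogarithmic correction and the constant $E_\kappa^\gamma$ come from the fluctuations of the pinned profile. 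The mechanism that converts these conditioned fluctuations into the processes in the statement is a path decomposition of $M_s$ at its extremum (Williams-type): the two sides become independent, the side(s) forced to stay below the extremum become, after the exponential tilt $e^{-\kappa\gamma\,\cdot}$, a Brownian motion conditioned to stay positive, i.e.\ a $\widetilde{\mathrm{BES}}(3)$ process (Definition~\ref{defi:con_Bessel}), while the overshoot of the extremum is asymptotically exponential. This produces $\widetilde\beta_\cdot^\tau$, the drifted Brownian motion, the exponential variables $\tau,\mathcal T$, and the factor $\sqrt{2/\pi}$ (the entrance/normalisation constant of the conditioned process, equivalently the Gaussian density of the extremum).

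The four cases correspond to where the extremum sits once the tilt and the drift compete. When $\alpha_1+\alpha_2=Q$ the neck drift vanishes, the extremum is driven to the side glued to the $(\alpha_3,\alpha_4)$ insertions, only the right surface survives as a nondegenerate $F$-functional, and integrating the exponential overshoot (rate $\kappa\gamma$) against $e^{-\kappa\gamma\,\cdot}$ gives the prefactor $\frac1{\kappa\gamma}$ of \eqref{eq:prob_rep2_1}. When $\alpha_1+\alpha_2-Q\in(0,\kappa\gamma)$ the extremum sits in the interior and splits the neck into two macroscopic halves that decouple; the value of the extremum is a sum of two independent exponential overshoots with rates $\alpha_1+\alpha_2-Q$ and $\kappa\gamma-(\alpha_1+\alpha_2-Q)$, and integrating their joint law over the location of the extremum yields the Beta factor $B\!\big(\tfrac{\alpha_1+\alpha_2-Q}{\gamma},\kappa-\tfrac{\alpha_1+\alpha_2-Q}{\gamma}\big)$ multiplying the \emph{product} of the two functional expectations in \eqref{eq:prob_rep2_2}. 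The boundary case $\alpha_1+\alpha_2-Q=\kappa\gamma$ collapses the right half and returns the single left functional \eqref{eq:prob_rep2_3}, and for $\alpha_1+\alpha_2-Q>\kappa\gamma$ the drift dominates the tilt: the extremum sticks to the finite end, no conditioning survives, and the profile on the surviving side is simply a Brownian motion with negative drift $-(\alpha_1+\alpha_2-Q-\kappa\gamma)$, giving \eqref{eq:prob_rep2_4} with $u=0$.

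The main obstacle is not the path decomposition but the uniform control needed to justify the two approximations above: exchanging $T\to\infty$ with the negative moment, and replacing the pre-limit shape $S$ by the functional $F$ evaluated along the limiting $\widetilde{\mathrm{BES}}(3)$ or drifted Brownian path. Concretely, one must prove joint convergence of the shape and the extremum data and, crucially, that $\E[F_{\cdot,\cdot}^{-\kappa}]<\infty$ along the conditioned paths — i.e.\ that the limiting GMC mass on each piece has good enough negative-moment (small-ball) behaviour. This is exactly where the hypothesis $\alpha_3+\alpha_4>Q$ and the precise decay of $\widehat M^\gamma$ near the insertion at $1$ enter, and establishing this uniform integrability together with the convergence of the conditioned GMC is the technical heart of the argument.
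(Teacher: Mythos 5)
Your outline follows essentially the same route as the paper: radial/angular decomposition on the cylinder, reduction to the running maximum of the tilted radial process, $\mathrm{BES}(3)$ descriptions of the conditioned path on either side of the maximum, an exponential law for the height of the maximum under the tilt $e^{-\kappa\gamma\,\cdot}$, and the same identification of the four regimes. The paper implements this by first proving a discrete shell representation (Lemma \ref{lem:prob_rep}): it partitions $\{\sup_{[0,t]}B_s\le nh\}$ into shells $\{\sup B\in[(k-1)h,kh]\}$, passes to the limit $t\to\infty$ shell by shell using the dominated-convergence controls imported from \cite{DKRV2}, and then recognises the resulting series as a Riemann sum which, via Williams' decomposition (Theorem \ref{theo:path_dec}), converges as $h\to0^+$ to an expectation over an exponentially distributed starting height of a $\widetilde{\mathrm{BES}}(3)$-process. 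Your ``decompose directly at the argmax'' is morally the same; note however that the clean exponential law of the maximum only emerges after the entropy factor $n$ coming from $\P(\sup_{[0,t]}B_s\le nh)\sim\sqrt{2/\pi}\,nh/\sqrt{t}$ cancels against the probability $1/n$ that the $\mathrm{BES}_{nh}(3)$ path dips back into the top shell of width $h$; this cancellation, which is what the indicator $1_{\{\min_s\beta_s^{nh}\le h\}}$ in Lemma \ref{lem:prob_rep} encodes, is not visible in your sketch.

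The one genuine gap is in the intermediate regime \eqref{eq:prob_rep2_2}. Decomposing at the maximum gives independence of the two path pieces, hence in the limit an expression of the form $\E[(L+R)^{-\kappa}]$ with $L,R$ independent; but a negative moment of a sum of independent variables does not factorise, so your assertion that ``integrating the joint law of the two exponential overshoots over the location of the extremum yields the Beta factor'' does not by itself produce the product of two expectations with the Beta prefactor. The missing ingredient is the exact identity of Lemma \ref{lem:-moments}: writing $\lambda:=\alpha_1+\alpha_2-Q$, one must first exhibit the limit in the form $\E[(X+e^{-\gamma T}Y)^{-\kappa}]$ with $T\sim\mathrm{Exp}(\lambda)$ independent of $X,Y$ (this is where the $\mathrm{Gamma}(2,\lambda)$ overshoot is split into a sum of two independent exponentials using Theorem \ref{theo:path_dec}), and then the identity $\E[(X+e^{-\gamma T}Y)^{-\kappa}]=\tfrac{\lambda}{\gamma}B(\tfrac{\lambda}{\gamma},\kappa-\tfrac{\lambda}{\gamma})\E[X^{-(\kappa-\lambda/\gamma)}]\E[Y^{-\lambda/\gamma}]$ delivers both the factorisation and the Beta function simultaneously. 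Without this step the stated product form is not reached from the argmax decomposition alone. Your identification of the remaining technical work (uniform-in-$t$ integrability, joint convergence of the conditioned GMC and the extremum data, finiteness of the negative moments of $F$) is accurate and corresponds to what the paper handles by the estimates of \cite{DKRV2}.
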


\begin{remark}
When $\alpha_1 + \alpha_2 - Q > \kappa \gamma$, we can easily rewrite \eqref{eq:prob_rep2_4} as
\begin{align*}
E_{\kappa}^{\gamma}(\alpha_1, \alpha_2, \alpha_3, \alpha_4)
& = \E \left[ \left(\int_{\C}\frac{|x^{-1}|_+^{(\kappa+1)\gamma^2} dM^\gamma(x)}{|x|^{4 - \gamma (\alpha_1 + \alpha_2)}|x-1|^{\gamma \alpha_2}} \right)^{-\kappa}\right]
\end{align*}

\noindent which  is very similar to the subcritical regime $\alpha_1 + \alpha_2 - Q < 0$ where
\begin{align*}
E_{\kappa}^{\gamma}(\alpha_1, \alpha_2, \alpha_3, \alpha_4) = 
\E \left[ \left(\int_{\C} \frac{dM^\gamma(x)}{|x|^{\gamma(\alpha_1+\alpha_2)}|x-1|^{\gamma \alpha_3}|x|_+^{4 - \gamma  \sum_{j=1}^4 \alpha_j}}\right)^{-\kappa}\right]
\end{align*}

\noindent can be obtained immediately by dominated convergence.

\end{remark}

%\noindent where
%\begin{align*}
%\widehat{R}_{\infty}
%& = e^{-\gamma \mathcal{T}} \int_{|x| \ge 1} \frac{M^\gamma(d^2x)}{|x|^{4 - \gamma (\alpha_1+ \alpha_2)} |x-1|^{\gamma \alpha_2}}
%+\int_{\mathcal{C}_+}e^{-\gamma(\widetilde{\beta}_s^\mathcal{T} - \alpha_2 G(1, e^{-s - i\theta}))}dM^\gamma(s,\theta).
%\end{align*}

	%%%%%%%%%%%%%%%%%%%%%%%%%%%%%%%%%%%%%%%%%%%%%%%%%%%%%%%
	\subsection{Conjectured link with random planar maps}
	\label{subsec:rpm}
The result of Theorem \ref{thm:result} has an interesting counterpart in the world of 2d discretised quantum gravity via the famous KPZ conjecture which was originally formulated in the physics literature by Knizhnik, Polyakov and Zamolodchikov \cite{KPZ}. Roughly speaking, the authors conjectured that, in some sense, LCFT should be the scaling limit of large random planar maps weighted by some statistical mechanics model. 

We start by recalling some facts about planar maps, using the setting of \cite{Ku} section 1 (see also \cite{DKRV1} section 5.3). A \emph{planar map} is a graph together with an embedding into the sphere such that no two edges cross and viewed up to orientation preserving homeomorphisms. %We can define different types of maps by requiring e.g. that each face is a triangle (triangulation), a quadrangle (quadrangulation) etc.

For concreteness, we will work with triangulations, meaning that all the faces in the map are triangles. Let $\mc{T}_{N,3}$ be the set of planar triangulations with $N$ faces and 3 extra marked points (called \emph{roots}). The combinatorics of $\mc{T}_{N,3}$ is well known since the work of Tutte \cite{Tutte} and we have
\[\#\mc{T}_{N,3}\underset{N\to\infty}{\asymp}N^{-1/2}e^{-\mu_cN}\]
for some $\mu_c>0$. We mention that a wide class of planar maps fall into the same universality class (e.g. $2p$-angulations), meaning that they scale like $N^{-1/2}e^{-\mu_c N}$ where $\mu_c$ depends on the model.

 There is a way to conformally embed any triangulation $(\t,\x_1,\x_2,\x_3)$ into the sphere by first turning it into a topological manifold and second specifying complex coordinate charts. This endows the triangulation with a structure of Riemann surface with conical singularities at vertices with $n\neq6$ neighbours, and this embedding is unique if we add the extra requirement that the marked points $(\x_1,\x_2,\x_3)$ are sent to $(0,1,\infty)$ (see e.g. \cite{Ku}). Concretely, if $\bigtriangleup\subset\C$ is an equilateral triangle with unit (Lebesgue) volume, the embedding provides a conformal map $\psi_t:\bigtriangleup\to\widehat{\C}$ for each triangle $t$ in the map. For all $a>0$, we consider the pushforward measure $d\nu_{t,a}(z)=a^2|(\psi_t^{-1})'(z)|^2dz$ on $\psi_t(\bigtriangleup)$, which assigns a mass $a^2$ to each triangle of $\t$. The collection of $(\nu_{t,a})_{t\in\t}$ defines a measure $\nu^\t_a$ on $\widehat{\C}$, and in particular $\nu_a^\t(\widehat{\C})=Na^2$ for all $\t\in\mc{T}_{N,3}$.

The model becomes interesting when we choose the triangulation randomly. The simplest example is the case of \emph{pure gravity}, which amounts in sampling the triangulation with respect to the probability measure defined by
\[\P_a(\t,\x_1,\x_2,\x_3):=\frac{1}{Z_a}e^{-\mu|\t|}\]
where $\mu:=(1+a^2)\mu_c$, $|\t|$ is the number of faces of $\t$ and $Z_a$ is a normalising constant. Notice that $Z_a\to\infty$ as we send $a\to0$, which means that the measure selects larger and larger maps. When $(\t,\x_1,\x_2,\x_3)$ is sampled under $\P_a$, the KPZ conjecture states that the random measure $\nu_a=\nu_a^\t$ converges in distribution (with respect to the topology of weak convergence of measures) as $a\to0$ to a random Radon measure $\nu$ on $\S^2$. This limiting measure is expected to be given by the Liouville measure (see \cite{DKRV1} section 3.3 for a definition) and in particular, it should satisfy the property that for all measurable $A\subset\widehat{\C}$,
\[\E\left[\frac{\nu(A)}{\nu(\widehat{\C})}\right]=\int_Af_{\sqrt{8/3},\mu_c}\]
where we have defined the probability density function
\begin{equation}
\label{eq:prob_density}
f_{\gamma,\mu}(z):=\frac{\mu\gamma}{3\gamma-2Q}\frac{\langle V_\gamma(0)V_\gamma(z)V_\gamma(1)V_\gamma(\infty)\rangle}{C_\gamma(\gamma,\gamma,\gamma)}
\end{equation}
for all $\gamma\in(0,2)$ and $\mu>0$ (see Appendix \ref{app:constant} for the derivation of the normalising constant).
%
%For simplicity, we consider the uniform measure on $\mc{T}_{N,3}$ (see \cite{DKRV1} for measures weighted by some statistical mechanics model at criticality). This case is also known as \emph{pure gravity}. Set $\bar{\mu}:=\mu_c+a^2\mu$ with some $\mu$ fixed and define the random measure
%\[\E^{a,\bar{\mu}}[F(\nu_{a,\bar{\mu}})]=\frac{1}{Z_a}\sum_{N\geq0}e^{-\bar{\mu}N}\sum_{T\in\mc{T}_{N,3}}F(\nu_a^T)\]
%for all continuous (with respect to the topology of weak convergence) bounded function $F$. Here, $Z_a$ is the normalisation constant and we write $\P^{a,\bar{\mu}}$ for the associated probability measure. It is conjectured (\cite{DKRV1}, Conjecture 2) that, as $a\to0$, the $\nu_{a,\bar{\mu}}$ converges in distribution (with respect to the topology of weak convergence of measures) to some (random) probability measure on $\S^2$. The conjecture includes a description of the candidate limit using the correlation functions of LCFT.
%
%For each $\gamma\in(0,2)$, consider LCFT with parameter $\gamma$ and notice that the triple of Liouville momenta $(\gamma,\gamma,\gamma)$ always satisfies the extended Seiberg bounds of \cite{DKRV1}, Lemma 3.10. Suppose $\gamma>\sqrt{2}$ and define the probability density $f_{\gamma,\mu}:\hat{\C}\to(0,\infty)$ by:
%\[f_{\gamma,\mu}(z):=\frac{\mu\gamma}{3\gamma-2Q}\frac{\langle V_\gamma(0)V_\gamma(z)V_\gamma(1)V_\gamma(\infty)\rangle}{C_\gamma(\gamma,\gamma,\gamma)}\]
The critical case of Theorem \ref{thm:result} is given by $\gamma=\frac{2}{\sqrt{3}}$\footnote{We notice that this is a special value of $\gamma$ from the random maps perspective since it corresponds to the scaling limit of bipolar-oriented maps, see \cite{KMSW}}, so that $\gamma=\sqrt{\frac{8}{3}}$ falls into the supercritical case. Thus we have the asymptotic behaviour (note that $\Delta_\gamma=\frac{\gamma}{2}\times\frac{2}{\gamma}=1$)
\begin{equation}
f_{\gamma,\mu}(z)\underset{z\to0}{\sim}\frac{\mu\gamma}{2\sqrt{2\pi}(3\gamma-2Q)}|z|^{\frac{Q^2}{2}-4}\left(\log\frac{1}{|z|}\right)^{-3/2}\frac{(\partial_3C_\gamma(\gamma,\gamma,Q))^2}{C_\gamma(\gamma,\gamma,\gamma)}
\end{equation}

If we integrate this formula on a small disc of radius $\varepsilon$, we find
\[\int_0^\varepsilon r^{\frac{Q^2}{2}-4}\left(\log\frac{1}{r}\right)^{-3/2}rdr
=(Q^2/2-2)^{1/2}\int_{(Q^2/2-2)\log\frac{1}{\varepsilon}}^\infty e^{-u}u^{-3/2}du\underset{\varepsilon\to0}{\sim}2\frac{\varepsilon^{\frac{Q^2}{2}-2}}{\sqrt{\log\frac{1}{\varepsilon}}}\]
so that
\begin{equation}
\label{eq:density}
\int_{|z|\leq\varepsilon}f_{\gamma,\mu}(z)dz\underset{\varepsilon\to0}{\sim}\frac{\sqrt{2\pi}\mu\gamma}{3\gamma-2Q}\frac{(\partial_3C_\gamma(\gamma,\gamma,Q))^2}{C_\gamma(\gamma,\gamma,\gamma)}\frac{\varepsilon^{\frac{Q^2}{2}-2}}{\sqrt{\log\frac{1}{\varepsilon}}}
\end{equation}
If the conjecture holds true, the asymptotic behaviour \eqref{eq:density} gives the expected fraction of vertices which are close to 0 in a large planar map. In particular, the exponent of $\varepsilon$ is $\frac{Q^2}{2}-2=1/12$ for pure gravity.

Similar conjectures hold for random maps coupled with some statistical mechanics model (e.g. Ising, Potts... see \cite{DKRV1}). The conjectures are essentially the same in each case except that the value of $\gamma$ and $\mu$ may vary (e.g. Ising model corresponds to $\gamma=\sqrt{3}$). However one can still plug the good value of $\gamma$ in formula \eqref{eq:density} to conjecture the expected density of points around 0.

\subsection{Outline} The remainder of this article is organised as follows. In the next section, we provide a summary of GFF and GMC for the construction of Liouville correlation functions, and then explain the main idea of our proofs. Section \ref{sec:proof} is devoted to the proof of Theorem \ref{thm:result} (on the four-point correlation) and Theorem \ref{theo:result_gmc} (on the decay of arbitrary negative moments of GMC), while that of Theorem \ref{theo:prob_rep1} (on the probabilistic representations of the limiting constants) is treated in Section \ref{sec:proof_gmc}. In the appendices we collect the DOZZ formula, discuss our work from the perspective of surfaces with conical singularities and explain how to normalise the four-point correlation to a probability distribution.

\bigskip
%%%%%%%%%%%%%%%%%%%%%%
\paragraph*{\textbf{Acknowledgements}} The authors wish to thank R\'emi Rhodes and Vincent Vargas for bringing this problem to their knowledge and for interesting comments on this work and discussions on LCFT.

%%%%%%%%%%%%%%%%%%%%%%%%%%%%%%%%%%%%%%%%%%%%%%%%%%%%%%%%%%%%%%%%%%%%
\section{Background}
\label{sec:background}
In this section, we recall the mathematical foundation for the Liouville measure \eqref{eq:lcft_path_integral} and the derivation for the 4-point function, and explain the main idea of our approach. To commence with, we quickly review GFF and GMC and mention several facts about them.

	%%%%%%%%%%%%%%%%%%%%%%%%%%%%%%%%%%%%%%%%%%%%%%%%%%%%%%%%%
	\subsection{Gaussian Free Field}
	\label{subsec:gff}
Let $H^1_0(\S^2,g)$ (or simply $H^1_0$) be the Sobolev space of functions with distributional derivatives in $L^2(\S^2,g)$ and vanishing $g$-mean. This space is equipped with the norm
\[\norm{X}_\nabla^2:=\frac{1}{2\pi}\int_{\S^2}|\nabla X|^2=-\frac{1}{2\pi}\int_{\S^2}\Delta X\cdot X\]
that we call the \emph{Dirichlet energy}. Hence we can interpret the formal measure $\frac{1}{Z_\mathrm{GFF}}\int e^{-\frac{1}{2}\norm{X}_\nabla^2}DX$ as a Gaussian probability measure on the space $H^1_0$ (where $Z_\mathrm{GFF}$ is a ``normalising constant" which we will explain at the end of this section). Thus if $(e_n)_{n\geq 1}$ is an orthonormal basis of $H^1_0$, we define the formal series
\[X=\sum_{n\geq 1}\alpha_ne_n\]
where $(\alpha_n)_{n\geq 1}$ is a sequence of i.i.d. normal random variables. It can be shown that this series converges in $H^{-1}_0$, the topological dual of $H^1_0$. In particular, it is not defined as a function but rather as a distribution in the sense of Schwartz. We call this field the Gaussian Free Field (GFF). We write $\P$ for the probability measure of the GFF and $\E$ the associated expectation. The covariance kernel of the GFF is given by Green's function $G:=(-\frac{1}{2\pi}\Delta)^{-1}$, i.e. we formally write 
\[\E[X(x)X(y)]=G(x,y)\]
where the kernel of Green's function is explicitly given by
\[G(x,y)=\log\frac{1}{|x-y|}+\log|x|_++\log|y|_+\]
Thus the ``normalising constant" $Z_\mathrm{GFF}$ that we are looking for should be given by $Z_\mathrm{GFF}:=(\det(-\frac{1}{2\pi}\Delta))^{1/2}$, which is obtained via zeta-regularisation \cite{OPS}.

There is a convenient choice of basis for $H^1_0$, which is the family $(\sqrt{\frac{2\pi}{\lambda_n}}\varphi_n)_{n\geq1}$ where $(\varphi_n)_{n\geq0}$ is an orthonormal basis of $L^2$ of eigenfunctions of $-\Delta$ with eigenvalues $0=\lambda_0<\lambda_1\leq...\leq\lambda_n...$. This gives an $L^2$ decomposition of the GFF, except that we are missing the zero mode (the coefficient in front of the constant function $\varphi_0\equiv \Vol_g(\S^2)^{-1/2}$). 	This should be a Gaussian with infinite variance and we interpret this as Lebesgue measure, since $\sqrt{\frac{2\pi}{\lambda}}$ times the law of a Gaussian random variable with variance $\lambda^{-1}$ converges vaguely to Lebesgue measure as $\lambda\to0$. So our final interpretation of the measure $e^{-\frac{1}{2}\norm{X}_\nabla^2}DX$ is that we set for all continuous functional $F:H^{-1}\to\R$
\begin{equation}
\int F(X)e^{-\frac{1}{2}\norm{X}_\nabla^2}DX=\left(\frac{\det(-\frac{1}{2\pi}\Delta)}{\Vol_g(\S^2)}\right)^{-1/2}\int_\R\E[F(X+c)]dc
\end{equation}

Throughout the paper, we will make an extensive use of the so-called \emph{radial/angular} decomposition of the GFF, which is better understood in cylinder coordinates. Let $\mc{C}_\infty:=\R_s\times\S^1_\theta$ be the complete cylinder. Under the conformal change of coordinates $\psi:z\mapsto-\log z$, the Riemann sphere $(\widehat{\C}\setminus\{0,\infty\},g)$ endowed with the cr\^epe metric is mapped to $(\cyl,g_\psi)$ with $g_{\psi}(s,\theta)=e^{-2|s|}$. From now on, we write $G$ for Green's function on $(\mc{C}_\infty,g_\psi)$ with vanishing mean on $\{0\}\times\S^1$.

\begin{lemma}
\label{lem:radial_angular}
Let $X(s,\theta)$ be a GFF on $\cyl$. Then we can write $X(s,\theta)=B_s+Y(s,\theta)$ where
\begin{enumerate}
\item $(B_s)_{s\in\R}$ is a two-sided Brownian motion. We will call this process the \emph{radial part} of the field.
\item $Y$ is a $\log$-correlated field with covariance kernel
\begin{equation}
\label{eq:def_H}
H(s,\theta,s',\theta'):=\E[Y(s,\theta)Y(s',\theta')]=\log\frac{e^{-s}\vee e^{-s'}}{|e^{-s-i\theta}-e^{-s'-i\theta'}|}
\end{equation}
We will call this field the \emph{lateral noise} or \emph{angular part} of the field. Notice that the law of $Y$ is translation invariant.
\item $B$ is independent of $Y$.
\end{enumerate}
\end{lemma}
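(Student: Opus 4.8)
The plan is to produce the decomposition by an explicit orthogonal splitting: define the \emph{radial part} as the average of $X$ over each circle,
\[
B_s := \frac{1}{2\pi}\int_0^{2\pi} X(s,\theta)\,d\theta,
\]
interpreted by testing the distribution $X$ against the uniform measure on $\{s\}\times\S^1$ after a standard mollification, and set $Y := X - B$. Since $X$ is a centered Gaussian field, $B$ and $Y$ are jointly Gaussian, so the whole statement reduces to three covariance computations: the covariance of $B$, the vanishing of the cross-covariance $\E[B_s Y(s',\theta')]$, and the covariance of $Y$. Independence in item (3) then follows from joint Gaussianity, once the cross-covariance is shown to vanish.

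The single analytic input is the circle mean-value identity (Jensen's formula) $\frac{1}{2\pi}\int_0^{2\pi}\log|a - re^{i\phi}|\,d\phi = \log\max(|a|,r)$. Writing the covariance of the cylinder GFF as $G(s,\theta,s',\theta') = -\log|e^{-s-i\theta}-e^{-s'-i\theta'}| + r(s)+r(s')$ --- the pushforward under $\psi\colon z\mapsto -\log z$ of Green's function on $(\S^2,g)$, whose singular part is $-\log(\text{distance})$ and whose remaining terms are $\theta$-independent --- I would average over $\theta$ to get
\[
\E[B_s X(s',\theta')] = -\log\max(e^{-s},e^{-s'}) + r(s)+r(s') = \min(s,s') + r(s)+r(s'),
\]
which is \emph{independent of $\theta'$}; averaging once more over $\theta'$ shows $\E[B_s B_{s'}]$ equals the same quantity. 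The normalization that $G$ has vanishing mean on $\{0\}\times\S^1$ forces $\E[B_0 X(\cdot)] = 0$, hence $B_0=0$ and, solving the resulting constraints, $r(s) = -(s\wedge 0)$. Thus $\E[B_s B_{s'}] = \min(s,s') - (s\wedge 0) - (s'\wedge 0)$, which one checks case-by-case (both arguments positive, both negative, opposite signs) to be exactly the covariance of a two-sided Brownian motion pinned at $B_0=0$; continuity of paths upgrades this covariance match to the statement that $B$ is a genuine two-sided Brownian motion, giving (1).

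For (3), since $\E[B_s X(s',\theta')] = \E[B_s B_{s'}]$ from the computation above, $\E[B_s Y(s',\theta')] = \E[B_s X(s',\theta')] - \E[B_s B_{s'}] = 0$, so $B\perp Y$. For (2), expanding $Y=X-B$ and using that all three mixed $B$--$X$ terms reduce to $\E[B_s B_{s'}]$ gives $\E[Y(s,\theta)Y(s',\theta')] = G(s,\theta,s',\theta') - \E[B_s B_{s'}] = -\log|e^{-s-i\theta}-e^{-s'-i\theta'}| + \log\max(e^{-s},e^{-s'}) = H(s,\theta,s',\theta')$; note that the $r(s),r(s')$ terms cancel automatically, so the identification of $H$ is robust to the precise normalization, and translation invariance of $H$ in $s$ is read off directly from the formula.

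The main obstacle is not the algebra but the rigorous handling of the circle average for the distributional field $X$: $B_s$ and $Y(s,\theta)$ are not literally pointwise objects, so I would carry out the computations at the level of mollified fields, or, equivalently, define $B$ and $Y$ directly as the independent Gaussian processes with the covariances above and verify that $B+Y$ reproduces $G$, which sidesteps the regularity issues entirely. A secondary point requiring care is justifying that the pushforward of the sphere Green's function under $\psi$, together with the change of normalization to vanishing mean on $\{0\}\times\S^1$, produces a covariance whose only $\theta$-dependence sits in the $-\log|\cdot|$ term; this is where the explicit form of the cr\^epe metric $g_\psi(s,\theta)=e^{-2|s|}$ enters.
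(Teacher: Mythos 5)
Your argument is correct and is exactly the standard circle-average decomposition: the paper states Lemma \ref{lem:radial_angular} without proof, and the covariance identity \eqref{eq:decomposition_green} it records immediately afterwards is precisely your computation $\E[B_sB_{s'}]=(s\wedge s')-(s\wedge 0)-(s'\wedge 0)=(|s|\wedge|s'|)1_{ss'\geq 0}$ together with $G-\E[B_sB_{s'}]=H$. The two points you flag as needing care (defining the circle average of a distributional field, and checking that the pushforward of the sphere Green's function has $\theta$-dependence only in the $-\log|\cdot|$ term, with $r(s)=\log|e^{-s}|_+=-(s\wedge 0)$ matching the vanishing-mean normalisation on $\{0\}\times\S^1$) are indeed the only delicate steps, and your fallback of defining $B$ and $Y$ directly by their covariances and verifying $B+Y$ reproduces $G$ disposes of both.
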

Otherwise stated, Lemma \ref{lem:radial_angular} enables to rewrite Green's function (on the cylinder) as
\begin{equation}
\label{eq:decomposition_green}
\begin{aligned}
G(s,\theta,s',\theta')
&=(|s|\wedge|s'|)1_{ss'\geq0}+H(s,\theta,s',\theta')\\
&=(|s|\wedge|s'|)1_{ss'\geq0}+H(0,0,s'-s,\theta'-\theta)\\
&=(|s|\wedge|s'|)1_{ss'\geq0}+G(0,0,s'-s,\theta'-\theta)
\end{aligned}
\end{equation}
\begin{remark}
We will sometimes abuse notations and write the more compact form $G(s+i\theta,s'+i\theta')$ (resp. $H(s+i\theta,s'+i\theta')$) for $G(s,\theta,s',\theta')$ (resp. $H(s+i\theta,s'+i\theta')$).
\end{remark}

	%%%%%%%%%%%%%%%%%%%%%%%%%%%%%%%%%%%%%%%%%%%%%%%%%%%%
	\subsection{Gaussian Multiplicative Chaos}
	\label{subsec:gmc}
Recall that a GFF is only defined as a distribution, so the exponential term $e^{\gamma X}$ is ill-defined \textit{a priori}. However it is possible to make sense of the measure $e^{\gamma X(x)}g(x)d^2x$ using a regularising procedure based on Kahane's theory of Gaussian Multiplicative Chaos (GMC) (see \cite{RVb,RhVa,Be} for more detailed reviews). 

We use the regularisation called the circle average. For $\varepsilon>0$, let $X_{g,\varepsilon}$ be the average of $X$ on the geodesic circle of radius $\varepsilon$ in the metric $g$. The field $X_\varepsilon$ is continuous, so the measure
\[dM^\gamma_{g,\varepsilon}(x):=e^{\gamma X_{g,\varepsilon}(x)-\frac{1}{2}\gamma^2\E[X_{g,\varepsilon}(x)^2]}d^2x\]
is well defined for all $\gamma\in(0,2)$, and it is known that the sequence of measures $M^\gamma_{g,\varepsilon}$ converges weakly in probability to a (random) Radon measure $M^\gamma_g$ with no atoms.
%
%The case $\gamma=2$ is critical and requires an extra ``push" in the regularisation, namely the sequence
%\[dM^2_\epsilon(x)=\sqrt{\log\frac{1}{\epsilon}}e^{2X_\epsilon(x)-2\E[X_\epsilon(x)^2]}dx\]
%converges to a non-trivial measure weakly in probability.

An important property of GMC measure is its conformal covariance \cite{DKRV1,DRV,GRV} under conformal multiplication
\begin{proposition}
Let $\omega\in\mc{C}^\infty(\S^2,g)$. Let $X$ be a GFF on $(\S^2,g)$ and $M^\gamma_{\tilde{g}}$ be the GMC measure obtained when regularising the field with circle averages in the metric $\tilde{g}:=e^{\omega}g$. Then $M^\gamma_{\tilde{g}}=e^{\frac{\gamma Q}{2}}M^\gamma_g$.
\end{proposition}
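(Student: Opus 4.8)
The plan is to compare the two circle-average regularisations of the \emph{same} field $X$ and to track how the Weyl rescaling $g\mapsto\tilde g=e^{\omega}g$ affects each deterministic ingredient in the construction of Section~\ref{subsec:gmc}. It suffices to fix a bounded continuous test function $f$ on $\S^2$ and to show that $\int f\,dM^\gamma_{\tilde g,\varepsilon}$ converges in probability to $\int f\,e^{\frac{\gamma Q}{2}\omega}\,dM^\gamma_g$ as $\varepsilon\to0$, since the limiting measures are determined by their pairing against such $f$ and $M^\gamma_{g,\varepsilon}\to M^\gamma_g$.

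First I would establish the geometric comparison of the two regularising circles. Because $\tilde g=e^{\omega}g$ with $\omega$ smooth, the metric $\tilde g$ dilates lengths near a point $x$ by $e^{\omega(x)/2}$ to leading order, so the geodesic circle of $\tilde g$-radius $\varepsilon$ centred at $x$ agrees, up to an error that is negligible as $\varepsilon\to0$, with the geodesic circle of $g$-radius $\delta(x):=\varepsilon e^{-\omega(x)/2}$. Hence the averaged fields satisfy $X_{\tilde g,\varepsilon}(x)=X_{g,\delta(x)}(x)+o(1)$, with an error that does not survive in the GMC limit. Substituting this identity into the definition of $M^\gamma_{\tilde g,\varepsilon}$ rewrites it, up to that error, as a $g$-regularised measure at the position-dependent scale $\delta(\cdot)$.

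The exponent $\frac{\gamma Q}{2}$ then emerges from bookkeeping exactly two deterministic factors. The volume form transforms as $dv_{\tilde g}=e^{\omega}\,dv_g$, contributing $e^{\omega}$, i.e. the summand $1$ in $\frac{\gamma Q}{2}=\frac{\gamma^2}{4}+1$. The renormalising prefactor, which I would write in the form $\varepsilon^{\gamma^2/2}$ (equivalently, isolating the universal $\log\frac1\varepsilon$ divergence of the Gaussian normalisation $\tfrac{\gamma^2}{2}\E[X_{\tilde g,\varepsilon}(x)^2]$), must be matched to the effective scale $\delta(x)=\varepsilon e^{-\omega(x)/2}$ of the geodesic circle; since $\varepsilon^{\gamma^2/2}=\delta(x)^{\gamma^2/2}e^{\frac{\gamma^2}{4}\omega(x)}$, this produces the factor $e^{\frac{\gamma^2}{4}\omega}$, i.e. the summand $\frac{\gamma^2}{4}$. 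Collecting both, one finds $dM^\gamma_{\tilde g,\varepsilon}(x)=e^{\frac{\gamma Q}{2}\omega(x)}\,dM^\gamma_{g,\delta(x)}(x)$ up to negligible errors, using $Q=\frac{\gamma}{2}+\frac{2}{\gamma}$.

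It remains to pass to the limit, which is where I expect the main difficulty. One must show that the $g$-regularised measure at the position-dependent scale $\delta(\cdot)=\varepsilon e^{-\omega/2}$ converges to the canonical $M^\gamma_g$, despite $\delta$ depending on $x$; since $\omega$ is smooth and bounded this is a controlled perturbation of the uniform scale, and the convergence follows either from the general uniqueness theory of GMC (the limit is independent of the mollifying family, \cite{DKRV1,DRV,GRV}) or from a direct second-moment computation showing that $\int f\,dM^\gamma_{g,\delta(\cdot)}$ and $\int f\,dM^\gamma_{g,\varepsilon}$ share the same $L^2$ limit. The technical heart is therefore to make the two $o(1)$'s uniform enough — the geometric discrepancy between the $\tilde g$- and $g$-geodesic circles in the second step, and the position dependence of $\delta$ in this last step — so that neither contributes as $\varepsilon\to0$; these estimates are standard and are carried out in \cite{DKRV1,DRV,GRV}.
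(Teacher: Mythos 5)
Your proof is correct, and it is the standard argument; the paper itself offers no proof of this proposition, only the citations \cite{DKRV1,DRV,GRV}, so your sketch is in effect a reconstruction of what those references do. The bookkeeping is exactly right: the $\tilde g$-geodesic circle of radius $\varepsilon$ coincides to leading order with the $g$-geodesic circle of radius $\delta(x)=\varepsilon e^{-\omega(x)/2}$, the volume form contributes $e^{\omega}$, the rescaled normalisation $\varepsilon^{\gamma^2/2}=\delta(x)^{\gamma^2/2}e^{\frac{\gamma^2}{4}\omega(x)}$ contributes $e^{\frac{\gamma^2}{4}\omega}$, and $1+\frac{\gamma^2}{4}=\frac{\gamma Q}{2}$; note that the statement as printed drops the $\omega$ from the exponent, and the version you prove, $dM^\gamma_{\tilde g}=e^{\frac{\gamma Q}{2}\omega}dM^\gamma_g$, is the intended one (it is the form actually used in Appendix \ref{app:conical}). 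One point in your write-up deserves emphasis because it is the crux rather than a side remark: the identification of the correct normalisation. If one literally renormalised by the full Gaussian variance $e^{-\frac{\gamma^2}{2}\E[X_{\tilde g,\varepsilon}(x)^2]}$ against a metric-independent reference measure (as the display in Section \ref{subsec:gmc} is loosely written), then since $X_{\tilde g,\varepsilon}(x)$ and $X_{g,\delta(x)}(x)$ agree up to negligible error the limit would not depend on the regularising metric at all and no anomaly factor would appear; the factor $e^{\frac{\gamma Q}{2}\omega}$ arises precisely because one subtracts only the universal $\tfrac{\gamma^2}{2}\log\frac1\varepsilon$ divergence (equivalently multiplies by $\varepsilon^{\gamma^2/2}$) and integrates against the volume form of the regularising metric, which is what you do. The remaining technical points you defer --- uniformity of the comparison of geodesic circles and convergence of the circle-average approximation at the position-dependent scale $\delta(\cdot)$ --- are indeed where the estimates live, and sending them to the cited references is consistent with how the paper itself treats the whole statement.
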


\begin{remark}
For notational convenience, when the regularising metric is the background metric $g(x)=|x|_+^{-4}$ on $\widehat{\C}$, we will drop the subscript and write $M^\gamma=M^\gamma_g$.
\end{remark}

Another useful tool of GMC is Kahane's convexity inequality (\cite{RVb}, Theorem 2.2)
\begin{theorem}[Kahane 1985]
Let $X$ and $Y$ be two continuous Gaussian fields on $D\subset\S^2$ such that for all $x,y\in D$
\[\E[X(x)X(y)]\leq\E[Y(x)Y(y)]\]
Then for all convex function $F:\R_+\to\R$ with at most polynomial growth at infinity,
\[\E\left[F\left(\int_De^{\gamma X(x)-\frac{\gamma^2}{2}\E[X(x)^2]}d^2x\right)\right]\leq\E\left[F\left(\int_De^{\gamma Y(x)-\frac{\gamma^2}{2}\E[Y(x)^2]}d^2x\right)\right]\]
\end{theorem}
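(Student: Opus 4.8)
The plan is to prove this by the classical Gaussian interpolation argument, reducing first to a finite-dimensional (discretised) setting and then differentiating along a path of Gaussian fields whose covariance interpolates linearly between that of $X$ and that of $Y$. Since the two total masses whose laws are compared depend only on the marginal distributions of $X$ and $Y$, I may first realise the two fields on a common probability space as \emph{independent} Gaussian objects without changing either side of the inequality.

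I would begin with two reductions. Since $F$ is convex with at most polynomial growth, I would approximate it by smooth convex $F_n\in C^2(\R_+)$ with uniformly polynomial growth, so that it suffices to treat $F\in C^2$ with $F''\ge 0$; the general case follows by passing to the limit, the growth hypothesis guaranteeing the uniform integrability needed to justify it. Second, I would discretise $D$ into cells $(D_i)$ of volume $\Delta_i$ with sample points $x_i$, replace the fields by the Gaussian vectors $(X(x_i))$ and $(Y(x_i))$, and write the masses as finite sums $\sum_i \Delta_i e^{\gamma X(x_i)-\frac{\gamma^2}{2}\E[X(x_i)^2]}$ and its analogue for $Y$. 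Continuity of the fields ensures these Riemann-type sums converge (in probability, and in the relevant sense after using polynomial growth) to the continuum masses, so it suffices to prove the inequality at the discrete level and then let the mesh shrink.

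Next, set $Z_t = \sqrt{t}\,X + \sqrt{1-t}\,Y$ for $t\in[0,1]$, so that $Z_0=Y$, $Z_1=X$ and $\E[Z_t(x_i)Z_t(x_j)] = t\,\E[X(x_i)X(x_j)] + (1-t)\,\E[Y(x_i)Y(x_j)]$. Writing $M_i(t) = \Delta_i e^{\gamma Z_t(x_i) - \frac{\gamma^2}{2}\E[Z_t(x_i)^2]}$, $W_t=\sum_i M_i(t)$ and $\varphi(t)=\E[F(W_t)]$, I would differentiate $\varphi$ under the expectation. The derivative splits into a contribution from the variance normalisation $-\frac{\gamma^2}{2}\E[Z_t(x_i)^2]$ and one from $Z_t(x_i)$ itself, the latter being treated by Gaussian integration by parts. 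Using $\E[\dot Z_t(x_i)Z_t(x_j)] = \frac12\big(\E[X(x_i)X(x_j)] - \E[Y(x_i)Y(x_j)]\big)$ together with $\partial_{Z_t(x_j)}W_t = \gamma M_j(t)$, the integration by parts produces a term proportional to $F''$ plus a term that exactly cancels the variance-normalisation contribution, leaving the clean identity
\[
\varphi'(t) = \frac{\gamma^2}{2}\sum_{i,j}\Big(\E[X(x_i)X(x_j)] - \E[Y(x_i)Y(x_j)]\Big)\,\E\big[F''(W_t)\,M_i(t)M_j(t)\big].
\]
As $M_i(t),M_j(t)\ge 0$, $F''\ge 0$ by convexity, and each covariance difference is $\le 0$ by hypothesis, every summand is nonpositive, so $\varphi'(t)\le 0$. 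Hence $\varphi(1)\le\varphi(0)$, which is precisely $\E[F(W_1)]\le\E[F(W_0)]$; letting the mesh shrink and then removing the smoothness of $F$ yields the claim.

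I expect the main obstacle to be not the interpolation computation, whose cancellation is algebraically clean, but the analytic justifications surrounding it: legitimately differentiating under the expectation, controlling the Gaussian integration by parts, and above all the two limiting passages. Establishing uniform integrability of $F(W_t)$ and of the quantities $F''(W_t)M_i(t)M_j(t)$, uniformly in $t$ and in the mesh, so that both the continuum limit and the approximation of a general convex $F$ by smooth ones are valid, is where the polynomial-growth hypothesis on $F$ must be used with care.
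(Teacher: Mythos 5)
The paper does not actually prove this statement: it is quoted from the Rhodes--Vargas review (\cite{RVb}, Theorem 2.2) and attributed to Kahane, so there is no in-paper argument to compare yours against. Your Gaussian interpolation proof is the standard proof of this result, and the core computation is correct: realising $X,Y$ independently, setting $Z_t=\sqrt{t}X+\sqrt{1-t}Y$ so the covariance interpolates linearly, and applying Gaussian integration by parts does yield
\[
\varphi'(t)=\frac{\gamma^2}{2}\sum_{i,j}\Bigl(\E[X(x_i)X(x_j)]-\E[Y(x_i)Y(x_j)]\Bigr)\,\E\bigl[F''(W_t)M_i(t)M_j(t)\bigr],
\]
the diagonal $F'$ term produced by differentiating $M_i$ itself cancelling exactly against the variance-normalisation term, after which the sign is forced by $F''\ge0$, $M_i\ge0$ and the covariance comparison; the direction $\varphi(1)\le\varphi(0)$ is the stated inequality. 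One caveat worth adding to your list of analytic justifications: as used in this paper the inequality is applied to $F(x)=x^{-\kappa}$ and $F(x)=e^{-\lambda x}$, and ``at most polynomial growth at infinity'' says nothing about the singularity of $F$ at $0$. The clean way to handle this is to first prove the inequality for bounded convex $F$ (e.g.\ $F_\epsilon(x)=F(x+\epsilon)$ or a truncation) and pass to the limit by monotone convergence; at the discrete level this is harmless because $W_t$ dominates a single lognormal summand, all of whose negative moments are finite, so the uniform integrability you need near $0$ is available. With that addition, and the limiting passages you already flag carried out, the argument is complete.
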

In practice, one can apply this theorem to the GMC measure $\log$-correlated fields like the GFF after using the regularising procedure.

Now suppose $X,Y$ are $\log$-correlated fields with $|\E[X(x)X(y)-\E[Y(x)Y(y)]|\leq\varepsilon$ and write $M^\gamma,N^\gamma$ for their respective chaos measure. In particular we have
\[\E[X(x)X(y)]\leq \E[Y(x)Y(y)]+\varepsilon\]
Notice that the field $Z(x)=Y(x)+\sqrt{\varepsilon}\delta$ -- with $\delta\sim\mc{N}(0,1)$ independent of everything -- has covariance kernel $\E[Y(x)Y(y)]+\varepsilon$. Hence by Kahane's convexity inequality, we have for all $\kappa>0$
\[\E[M^\gamma(D)^{-\kappa}]\leq \E[e^{-r\gamma\sqrt{\varepsilon}\delta}N^\gamma(D)^{-\kappa}]=e^{\frac{1}{2}\gamma^2r^2\varepsilon}\E[N^\gamma(D)^{-\kappa}]\]
By the symmetry of the roles played by $X$ and $Y$, the converse inequality is also true, so
\[\E[M^\gamma(D)^{-\kappa}]=\E[N^\gamma(D)^{-\kappa}](1+O_{\varepsilon\to0}(\varepsilon))\]
Similarly, we have for all $c\in\R$,
\[\E\left[\exp(-\mu	e^{\gamma c}M^\gamma(D))\right]=\E\left[\exp(-\mu e^{\gamma c}N^\gamma(D))\right](1+O_{\varepsilon\to0}(\varepsilon))\]
	
	%%%%%%%%%%%%%%%%%%%%%%%%%%%%%%%%%%%%%%%%%%%%%%%%%%%%%%%%%%%%%%%%%
	\subsection{Derivation of the correlation function}
	\label{subsec:setup}
Using the GFF and GMC we are ready to state the definition of the correlation functions on the sphere. For $\varepsilon>0$, we can regularise the vertex operator $V_{\alpha_i}(z_i)$ by defining $V_{\alpha_i,\varepsilon}(z_i)=e^{\alpha_iX_\varepsilon(z_i)-\frac{\alpha_i^2}{2}\E[X_\varepsilon(z_i)^2]}$. By Cameron-Martin theorem, we have (recall $\sigma=\sum_{i=1}^N\frac{\alpha_i}{Q}-2>0$)
\begin{equation}
\label{eq:correl_regularised}
\left\langle\prod_{i=1}^NV_{\alpha,\varepsilon}(z_i)\right\rangle=2e^{C_\varepsilon(\z)}\int_\R e^{Q\sigma c}\E\left[\exp\left(-\mu e^{\gamma c}\int_{\widehat{\C}}e^{\gamma\sum_{i=1}^N\alpha_iG_\varepsilon(z_i,\cdot)}dM^\gamma\right)\right]dc
\end{equation}
where $C_\varepsilon(\z)=\sum_{i<j}\alpha_i\alpha_jG_\varepsilon(z_i,z_j)$. This regularised correlation function \eqref{eq:correl_regularised} converges to a positive finite limit as $\varepsilon\to0$ as long as the Seiberg bounds are satisfied as the GMC measure integrates the singularities around each insertion. We take this limit as our definition of the correlation function

\begin{equation}
\label{eq:def_correl}
\begin{aligned}
\left\langle\prod_{i=1}^NV_{\alpha_i}(z_i)\right\rangle
&=2e^{C(\z)}\int_\R e^{Q\sigma c}\E\left[\exp\left(-\mu e^{\gamma c}\int_{\widehat{\C}}e^{\gamma\sum_{i=1}^N\alpha_iG(z_i,\cdot)}dM^\gamma\right)\right]dc\\
&=2e^{C(\z)}\gamma^{-1}\mu^{-\frac{Q\sigma}{\gamma}}\Gamma\left(\frac{Q\sigma}{\gamma}\right)\E\left[\left(\int_{\widehat{\C}}e^{\gamma\sum_{i=1}^N\alpha_iG(z_i,\cdot)}dM^\gamma\right)^{-\frac{Q\sigma}{\gamma}}\right]
\end{aligned}
\end{equation}
after making the change of variable $u=e^{\gamma c}$. As can be seen from expression \eqref{eq:def_correl}, the finiteness of the correlation function in our probabilistic formulation is equivalent to the finiteness of the moments of the GMC measure. This holds provided the \emph{extended Seiberg bounds} are satisfied \cite{KRV2}
\[-\frac{Q\sigma}{\gamma}<\frac{4}{\gamma^2}\wedge\underset{1\leq i\leq N}{\min}(Q-\alpha_i)\qquad\qquad\forall i,\alpha_i<Q\]

 In particular, if $N=3$ with insertions at $(0,1,\infty)$ and Liouville momenta $(\alpha_1,\alpha_2,\alpha_3)$ satisfying the Seiberg bounds, the expression is simply
\begin{equation}
\begin{aligned}
\langle V_{\alpha_1}(0)V_{\alpha_2}(1)V_{\alpha_3}(\infty)\rangle
&=2\gamma^{-1}\mu^{-\frac{Q\sigma}{\gamma}}\Gamma\left(\frac{Q\sigma}{\gamma}\right)\E\left[\left(\int_{\widehat{\C}}e^{\gamma(\alpha_1G(0,\cdot)+\alpha_2G(1,\cdot)+\alpha_3G(\infty,\cdot))}dM^\gamma\right)^{-\frac{Q\sigma}{\gamma}}\right]
\end{aligned}
\end{equation}
and this expression equals the DOZZ formula $C_\gamma(\alpha_1,\alpha_2,\alpha_3)$ \cite{KRV2}.

As for the four-point correlation function with insertions at $(z_1,z_2,z_3,z_4)=(0,z,1,\infty)$ with $|z|<1$, we find
\begin{equation}
\begin{aligned}
\langle V_{\alpha_1}(0)V_{\alpha_2}(z)&V_{\alpha_3}(1)V_{\alpha_4}(\infty)\rangle\\
&=\frac{2}{|z|^{\alpha_1\alpha_2}|1-z|^{\alpha_2\alpha_3}}\int_\R e^{-Q\sigma c}\E\left[\exp\left(-\mu e^{\gamma c}\int_{\widehat{\C}}e^{\gamma\sum_{i=1}^4\alpha_iG(z_i,\cdot)}dM^\gamma\right)\right]dc
\end{aligned}
\end{equation}
%In the sequel our main object of study will be the integral
%\begin{equation}
%\label{eq:def_I}
%\begin{aligned}
%I(z)
%&:=\int_\R e^{Q\sigma c}\E\left[\exp\left(-\mu e^{\gamma c}\int_{\hat{\C}}e^{\gamma\sum_{i=1}^4\alpha_iG(z_i,\cdot)}dM^\gamma\right)\right]dc\\
%&=\gamma^{-1}\mu^{-\frac{Q\sigma}{\gamma}}\Gamma\left(\frac{Q\sigma}{\gamma}\right)\E\left[\left(\int_{\hat{\C}}e^{\gamma\sum_{i=1}^4\alpha_iG(z_i,\cdot)}dM^\gamma\right)^{-\frac{Q\sigma}{\gamma}}\right]
%\end{aligned}
%\end{equation}

%%%%%%%%%%%%%%%%%%%%%%%%%%%%%%%%%%%%%%%%%%%%%%%%%%%%%%
	\subsection{Main idea}
	\label{subsec:main_idea}
We now explain our approach which is inspired by \cite{DKRV2}. By applying the radial/angular decomposition of the GFF as we will see in Section \ref{subsec:proof}, we can effectively transform our problem to the study of exponential functionals of Brownian motion. 

To be more precise consider the following toy model. Let $(B_s^{\lambda})_{s \ge 0}$ be a Brownian motion with drift $\lambda$, and suppose $C_1, C_2 > 0$ are two fixed constants. Our goal is to understand the asymptotics of
\begin{align}\label{eq:idea1}
\E \left[ \left(C_1 + \int_0^t e^{\gamma B_s^\lambda}ds + C_2 e^{\gamma B_t^\lambda}\right)^{-\kappa}\right]
\end{align}

\noindent as $t \to \infty$. In order to extract the leading order in \eqref{eq:idea1}, we have to play the game of balancing energy (i.e. asking our drifted Brownian motion $(B_s^\lambda)_s$ to remain small) and entropy (i.e. paying a multiplicative cost given by the probability of such event).
\begin{itemize}[leftmargin=*]
\item When $\lambda < 0$, we don't have to do anything because $B_s^\lambda \xrightarrow{s \to \infty} -\infty$ anyway, and 
\begin{align*}
\E \left[ \left(C_1 + \int_0^t e^{\gamma B_s^\lambda}ds + C_2 e^{\gamma B_t^\lambda}\right)^{-\kappa}\right] 
\xrightarrow{t \to \infty} \E \left[ \left(C_1 + \int_0^\infty e^{\gamma B_s^\lambda}ds \right)^{-\kappa}\right]
\end{align*} 

\noindent by dominated convergence easily.

\item When $\lambda = 0$, we should demand our Brownian motion to never exceed an $O(1)$ threshold. On the event that $\{\sup_{s \le t} B_s \le N \}$, $(N - B_s)_{s \le t}$ behaves like a $\mathrm{BES}_N(3)$-process and drifts to $-\infty$, and therefore for suitably chosen $t' \ll t$ we see that
\begin{align*}
C_1 + \int_0^t e^{\gamma B_s^\lambda}ds + C_2 e^{\gamma B_t^\lambda}
\approx C_1 + \int_0^{t'} e^{\gamma B_s^\lambda}ds 
\end{align*}

\noindent  is expected to be $O(1)$ while the entropy cost is given by
\begin{align*}
\P\left(\sup_{s \le t} B_s \le N\right) \sim \sqrt{\frac{2}{\pi}}\frac{N}{\sqrt{t}} =  O\left(t^{-\frac{1}{2}}\right).
\end{align*}

\item When $\lambda \in (0, \kappa \gamma)$, we still demand our drifted Brownian motion $B_t^{\lambda}$ to remain below an $O(1)$ threshold, which requires an entropy cost of
\begin{align*}
\P\left(\sup_{s \le t} B_s^{\lambda} \le N\right) 
\sim \sqrt{\frac{2}{\pi}} \frac{e^{-\frac{\lambda^2}{2}t}}{\lambda^2 t^{\frac{3}{2}}} N e^{\lambda N}
= O\left(e^{-\frac{\lambda^2}{2}t} t^{-\frac{3}{2}}\right).
\end{align*}

\noindent The structural difference here is that even though $B_s^{\lambda}$ is rather negative in the intermediate time interval $s \in [t', t-t']$, the terminal value $B_t^{\lambda}$ is typically $O(1)$:
\begin{align*}
\P\left( B_t^{\lambda} \le x \bigg| \sup_{s \le t} B_s^{\lambda} \le N \right)
\xrightarrow{t \to \infty} e^{-\lambda(N-x)}(1+ \lambda(N-x)), \qquad x \le N.
\end{align*}

\noindent Therefore for the purpose of deriving the renormalised constant, we will have to keep
\begin{align*}
C_1 + \int_0^{t} e^{\gamma B_s^\lambda}ds + C_2 e^{\gamma B_t^\lambda}
\approx C_1 + \int_0^{t'} e^{\gamma B_s^\lambda}ds + \int_{t-t'}^{t} e^{\gamma B_s^\lambda}ds + e^{\gamma B_t^{\lambda}} C_2.
\end{align*}

\noindent which is $O(1)$ as $(B_s^{\lambda})_{s \le t'}$ and $(B_{t-s}^{\lambda} - B_t^{\lambda})_{s \le t'}$ behave like the negation of two independent $\mathrm{BES}(3)$-processes.

\item Moving beyond, we can only ask the $B_s^{\lambda}$ not to drift faster than $\lambda - \kappa\gamma$ or else the entropy cost would be too expensive. To proceed we first apply Cameron-Martin theorem to rewrite \eqref{eq:idea1} as
\begin{align}
\notag
& \E \left[e^{\kappa \gamma B_t - \frac{\kappa^2 \gamma^2}{2}t} \left(C_1 + \int_0^t e^{\gamma B_s^{\lambda - \kappa\gamma}}ds + C_2 e^{\gamma B_t^{\lambda - \kappa\gamma}}\right)^{-\kappa}\right]\\
\label{eq:idea2}
& \qquad = e^{-\kappa \gamma \lambda t + \frac{\kappa^2\gamma^2}{2}t}
\E \left[\left(C_1e^{-\gamma B_t^{\lambda-\kappa\gamma}} + \int_0^t e^{\gamma (B_s^{\lambda - \kappa\gamma} - B_t^{\lambda - \kappa\gamma})}ds + C_2\right)^{-\kappa}\right].
\end{align}

 If $\lambda = \kappa\gamma$, there isn't any drift in the expectation. The observation from the case $\lambda = 0$ suggests that we may want to demand $B_{t-s} - B_t$ to not exceed an $O(1)$ threshold for $s \le t$. This would imply again an entropy cost of $O(t^{-\frac{1}{2}})$, and we expect that 
\begin{align*}
C_1e^{-\gamma B_t^{\lambda-\kappa\gamma}} + \int_0^t e^{\gamma (B_s - B_t)}ds + C_2
\approx \int_0^{t'} e^{\gamma (B_s - B_t)}ds + C_2
\end{align*}

\noindent is $O(1)$ because $(B_{t-s} - B_t)_{s \le t'}$ behaves like the negation of a $\mathrm{BES}(3)$-process as before.

If $\lambda > \kappa \gamma$, the story is simpler because $B_{t-s}^{\lambda - \kappa\gamma} - B_t^{\lambda - \kappa\gamma}$ may be seen as a Brownian motion with negative drift.  Similar to the earlier case where $\lambda < 0$,
\begin{align*}
C_1e^{-\gamma B_t^{\lambda-\kappa\gamma}} + \int_0^t e^{\gamma (B_s^{\lambda - \kappa\gamma} - B_t^{\lambda -\kappa \gamma})}ds + C_2
\approx \int_0^{t'} e^{\gamma (B_s^{\lambda - \kappa\gamma} - B_t^{\lambda -\kappa \gamma})}ds + C_2.
\end{align*}

\noindent is already $O(1)$ without incurring any further entropy cost.
\end{itemize}

% In our actual situation, the constants $C_1$ and $C_2$ will be replaced by the mass of $M^{\gamma}$ from $\{|x| \ge 1\}$ and the rescaled mass from $\{|x| \le |z|\}$ respectively, the Lebesgue measure $ds$ in \eqref{eq:idea1} by the GMC associated with the lateral noise, and $t = -\log |z|$. The analysis will then require extra estimates regarding Gaussian multiplicative chaos, but in the end the two problems are conceptually the same.

	%%%%%%%%%%%%%%%%%%%%%%%%%%%%%%%%%%%%%%%%%%%%%%%%%%%%
	\subsection{Path decomposition of BES(3)-processes}
	\label{subsec:path_dec}
Before we proceed to the proofs, we collect Williams' path decomposition theorem \cite{Wil} for 3-dimensional Bessel processes (abbreviated as $\mathrm{BES}(3)$-processes) which will be helpful when we study the probabilistic representations of the renormalised constant \eqref{eq:gmc_const}.

\begin{theorem}[Williams 1974] \label{theo:path_dec}
Fix $x > 0$, and consider the following independent objects:
\begin{itemize}
\item $(B_s)_{s \ge 0}$ is a standard Brownian motion (starting from $0$).
\item $U$ is a $\mathrm{Uniform}[0,1]$ random variable.
\item $(\beta_s^0)_{s \ge 0}$ is a 3-dimensional Bessel process starting from $0$.
\end{itemize}

\noindent Then the process $(\widehat{\beta}_s^x)_{s\ge 0}$ defined by
\begin{align}\label{eq:path_dec}
\widehat{\beta}_s^x = \begin{cases}
x + B_s & s \le T_{-x(1-U)}, \\
xU + \beta_{s - T_{-x(1-U)}}^0 & s \ge T_{-x(1-U)}
\end{cases}
\end{align}

\noindent with
\begin{align*}
T_{-x(1-U)} 
= \inf \{s > 0: B_s = -x(1-U)\}
= \inf \{s > 0: x+ B_s = xU\}
\end{align*}

\noindent is a 3-dimensional Bessel process starting from $x$ (written as $\mathrm{BES}_x(3)$-process).
\end{theorem}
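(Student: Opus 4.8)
The plan is to recover the constructed process as the decomposition of a genuine $\mathrm{BES}_x(3)$-process at its global minimum, so that the two phases of the construction correspond to the pre- and post-minimum segments. First I would record the two analytic facts about $R := \mathrm{BES}_x(3)$ that drive everything: it is a transient diffusion with scale function $s(r) = -1/r$, so for $0 < a < x$ one has $\P_x(\inf_{t \ge 0} R_t < a) = \P_x(T_a < \infty) = a/x$. Hence the global infimum $m := \inf_{t \ge 0} R_t$ is $\mathrm{Uniform}[0,x]$-distributed, and, being a transient one-dimensional diffusion, $R$ attains $m$ at an a.s.\ unique time $\rho$, which coincides with the last visit $L_m := \sup\{t : R_t = m\}$ to level $m$. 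This already produces the random level $m = xU$ with $U \sim \mathrm{Uniform}[0,1]$.

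Next I would identify the pre-minimum path. Writing $\mathrm{BES}(3)$ as the Doob $h$-transform of Brownian motion killed at $0$ with $h(y) = y$, the law of $(R_t)_{t \le T_a}$ on $\{T_a < \infty\}$ is absolutely continuous with respect to that of Brownian motion $(x + B_t)$ run up to its first hitting time of $a$, with Radon--Nikodym density $h(a)/h(x) = a/x$; this density is \emph{constant} on the event because the terminal value is pinned at $a$. Thus, conditionally on $\{m = a\}$ --- on which $\rho$ is the first (hence only) time $R$ reaches $a$, the path staying strictly above $a$ before $\rho$ --- the segment $(R_t)_{t \le \rho}$ is distributed exactly as $(x + B_t)$ stopped at $\inf\{t : x + B_t = a\}$, matching Phase~1 of the construction with $a = xU$ and stopping level $a - x = -x(1-U)$.

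For the post-minimum path, conditionally on $\{m = a\}$ the process $(R_{\rho + t})_{t \ge 0}$ starts at $a$ and stays strictly above $a$ forever, so it is $\mathrm{BES}_a(3)$ conditioned never to return to $a$. Since $\hat h(r) := (r-a)/r$ is harmonic for the $\mathrm{BES}(3)$ generator $\tfrac12 \partial_r^2 + r^{-1}\partial_r$ and equals $\P_r(\text{avoid }a) = 1 - a/r$ for $r \ge a$, this conditioned process is the $\hat h$-transform of $\mathrm{BES}(3)$; a one-line computation gives its drift as $r^{-1} + \hat h'/\hat h = (r-a)^{-1}$, which is precisely the generator of $a + \mathrm{BES}_0(3)$. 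This matches Phase~2. The strong Markov property at $\rho$ (equivalently the last-exit decomposition at level $m$) yields conditional independence of the two segments given $m$, and assembling the three ingredients reproduces the construction verbatim, with $m = xU$ and $U, B, \beta^0 = \mathrm{BES}_0(3)$ independent.

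The one genuinely delicate point --- and the step I expect to cost the most work --- is that $\rho = L_m$ is a last-exit time rather than a stopping time and $\{m = a\}$ is a null event, so the decomposition at the minimum cannot be read off from the ordinary strong Markov property and must instead be justified through the last-exit decomposition theorem for transient diffusions (or by conditioning on $\{T_a < \infty\}$ and letting the level $a$ localize onto $m$). Once this is in place, the scale-function and $h$-transform computations above are entirely routine. Alternatively, one may follow Williams' original route via time reversal of $\mathrm{BES}_0(3)$ from a last-exit time, which produces the killed Brownian motion of Phase~1 directly; I would use whichever of the two makes the conditional independence cleanest to state.
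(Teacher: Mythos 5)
The paper does not prove this statement: it is quoted as a classical result of Williams and referred to \cite{Wil} without argument, so there is no in-paper proof to compare yours against. Judged on its own terms, your proposal is a correct outline of the standard modern proof. The three ingredients all check out: the scale function $s(r)=-1/r$ gives $\P_x(T_a<\infty)=a/x$ and hence $m\sim\mathrm{Uniform}[0,x]$; the $h$-transform identity with $h(y)=y$ gives a Radon--Nikodym density $h(a)/h(x)=a/x$ that is constant on $\{T_a<\infty\}$, so the pre-$T_a$ segment conditioned on that event is exactly Brownian motion stopped at $a$; and the $\hat h$-transform with $\hat h(r)=1-a/r$ (which is indeed harmonic for $\tfrac12\partial_r^2+r^{-1}\partial_r$) produces drift $1/(r-a)$, i.e.\ $a+\mathrm{BES}_0(3)$. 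You have also correctly isolated the one real difficulty --- $\rho=L_m$ is a last-exit time, not a stopping time, and $\{m=a\}$ is null --- and both remedies you name are legitimate. The cleanest version of the localization you sketch is to condition on $\{m\in(a,a+h)\}$, which up to null sets is $\{T_{a+h}<\infty\}\setminus\{T_a<\infty\}$, apply the strong Markov property at the genuine stopping time $T_{a+h}$ to get independence of the pre- and post-segments together with the identification of their laws, and let $h\to0$.

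Two small cautions. First, the sentence ``the strong Markov property at $\rho$ \ldots yields conditional independence'' is not literally available, as you yourself note a paragraph later; in a written-up proof the independence must come from the strong Markov property at $T_{a+h}$ inside the localization, or from the general last-exit decomposition. Second, the a.s.\ uniqueness of the time at which the infimum is attained is asserted up front but is most easily obtained \emph{a posteriori} from the decomposition itself (the post-minimum piece $m+\mathrm{BES}_0(3)$ is strictly above $m$ for $t>0$); if you want it beforehand you need the standard last-exit-time argument for transient regular diffusions. Neither point is a gap in the strategy, only in the order of presentation. Your alternative suggestion --- Williams' original route by time-reversing $\mathrm{BES}_0(3)$ at a last exit time --- is the one actually taken in \cite{Wil}.
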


In view of Theorem \ref{theo:path_dec}, we introduce the following definition.
\begin{definition}\label{defi:con_Bessel}
Let $(B_s)_{s \ge 0}$ and $(\beta_s^0)_{s \ge 0}$ be as in Theorem \ref{theo:path_dec}, and $x \ge 0$ an independent random variable. Then the process $(\widetilde{\beta}_s^x)_{s \ge 0}$ defined by
\begin{align}\label{eq:con_Bessel}
\widetilde{\beta}_s^x = \begin{cases}
x + B_s & s \le T_{-x}, \\
\beta_{s - T_{-x}}^0 & s \ge T_{-x}
\end{cases}
\end{align}

\noindent with
\begin{align*}
T_{-x} 
= \inf \{s > 0: x+ B_s = 0\}
\end{align*}

\noindent is called a 3-dimensional Bessel process starting from $x$ conditioned on hitting $0$, written as $\widetilde{\mathrm{BES}}_x(3)$-process.
\end{definition}

%%%%%%%%%%%%%%%%%%%%%%%%%%%%%%%%%%%%%%%%%%%%%%%%%%%%%%%%%%%%%
\section{Proof of Theorem \ref{thm:result}}
\label{sec:proof}

	%%%%%%%%%%%%%%%%%%%%%%%%%%%%%%%%%%%%%%%%%%%%%%%%%%%%%%%%%
	\subsection{Supercritical case}
	\label{subsec:proof}
 We set the insertions at $(z_1,z_2,z_3,z_4):=(0,z,1,\infty)$ with Liouville momenta $(\alpha_1,\alpha_2,\alpha_3,\alpha_4)$ satisfying the Seiberg bounds, and we write $-\log z=t+i\phi$ with $t>0$ and $\phi\in[0,2\pi)$. We assume that both $\alpha_3+\alpha_4-Q>0$ and $\alpha_1+\alpha_2-Q>0$ which corresponds to the supercritical case of Theorem \ref{thm:result}. Notice that this corresponds precisely to having $(\alpha_1,\alpha_2,Q)$ and $(Q,\alpha_3,\alpha_4)$ satisfying the Seiberg bounds (with respectively the 3$^{\mathrm{rd}}$ and $1^{\mathrm{st}}$ momenta saturating the second Seiberg bound).

\begin{proof}[Proof of \eqref{eq:larger}]
Let $X(s,\theta)=B_s+Y(s,\theta)$ be a GFF on $\cyl=\R_s\times\S^1_\theta$. By the conformal covariance of GMC, it is equivalent to study the chaos measure of $X$ with respect to $g_\psi$ or to consider the field $X(s,\theta)+\frac{Q}{2}\log g_\psi(s,\theta)=X(s,\theta)-Q|s|$ and do the regularisation with respect to Lebesgue measure. 

From now on, we write $d\widehat{M}^\gamma(s,\theta)$ for GMC measure of the lateral noise with respect to Lebesgue measure on $\cyl$ (while $dM^\gamma(x)$ will be used for GMC measure of the entire GFF in spherical coordinates).

We are interested in the total GMC mass
\begin{equation} \label{eq:W_t}
\begin{aligned}
W_t&:=\int_{\cyl}e^{\gamma(B_s+(\alpha_1-Q)s1_{s>0}-(\alpha_4-Q)s1_{s<0}+\alpha_3G(0,s+i\theta)+\alpha_2G(t+i\phi,s+i\theta))}d\widehat{M}^\gamma(s,\theta)\\
&=\int_{\cyl}e^{\gamma(B_s+(\alpha_1+\alpha_21_{s<t}-Q)s1_{s>0}-(\alpha_4-Q)s1_{s<0}+\alpha_3G(0,s+i\theta)+\alpha_2G(0,s-t+i(\theta-\phi)))}d\widehat{M}^\gamma(s,\theta)
\end{aligned}
\end{equation}

The behaviour of this integral is essentially governed by the radial process. From the expression above, we can see that on the negative real line the process is $(B_{-s}+(\alpha_4-Q)s)_{s\geq0}$ which is a Brownian motion with negative drift so the integrand is integrable at $s=-\infty$. On the positive real line, the radial process has a positive drift $\alpha_1+\alpha_2-Q$ up to time $t$, then a negative drift $\alpha_4-Q$ from $t$ to $\infty$. 

 The first step is to apply Cameron-Martin theorem to get rid of the $(\alpha_1+\alpha_2-Q)$ drift term in $[0,t]$, so that for all continuous and bounded function $F:\R\to\R$
\begin{equation}
\label{eq:rn}
\begin{aligned}
\E\left[F(W_t)\right]=\E\left[e^{(\alpha_1+\alpha_2-Q)B_t-\frac{1}{2}(\alpha_1+\alpha_2-Q)^2t}F(Z_t)\right]
\end{aligned}
\end{equation}
where $Z_t$ is the random variable defined by
\begin{equation}
Z_t:=\int_{\cyl}e^{\gamma(B_s+(\alpha_1-Q)(t-s)1_{s>t}-(\alpha_4-Q)s1_{s<0}+\alpha_2G(0,t-s+i(\phi-\theta))+\alpha_3G(0,s+i\theta))}d\widehat{M}^\gamma(s,\theta)
\end{equation}
Hence the correlation function takes the form (recall $t=\log\frac{1}{|z|}$)
\begin{equation}
\label{eq:correl}
\begin{aligned}
\langle V_{\alpha_1}(0)V_{\alpha_2}(z)&V_{\alpha_3}(1)V_{\alpha_4}(\infty)\rangle\\
&=2|z|^{2(\frac{Q^2}{4}-\Delta_1-\Delta_2)}|1-z|^{-\alpha_2\alpha_3}\int_\R e^{Q\sigma c}\E\left[e^{(\alpha_1+\alpha_2-Q)B_t}\exp(-\mu e^{\gamma c}Z_t)\right]dc
\end{aligned}
\end{equation}
where the exponent for $|z|$ was found by noticing that $\frac{1}{2}(\alpha_1+\alpha_2-Q)^2-\alpha_1\alpha_2=2(\frac{Q^2}{4}-\Delta_1-\Delta_2)$.

\begin{remark}
The change of measure \eqref{eq:rn} becomes trivial if $\alpha_1+\alpha_2=Q$. This is the reason why there is a phase transition at this value and why the case is easier to treat.
\end{remark}

\begin{remark}
From a geometric point of view, the change of measure \eqref{eq:rn} has the effect of changing the background metric from a cone to a cylinder as illustrated in Figure \ref{fig:rn} (see also Appendix \ref{app:conical} for links between changes of metrics and changes of probability measures).
\end{remark}

\begin{figure}[h!]
\centering
\includegraphics[scale=0.8]{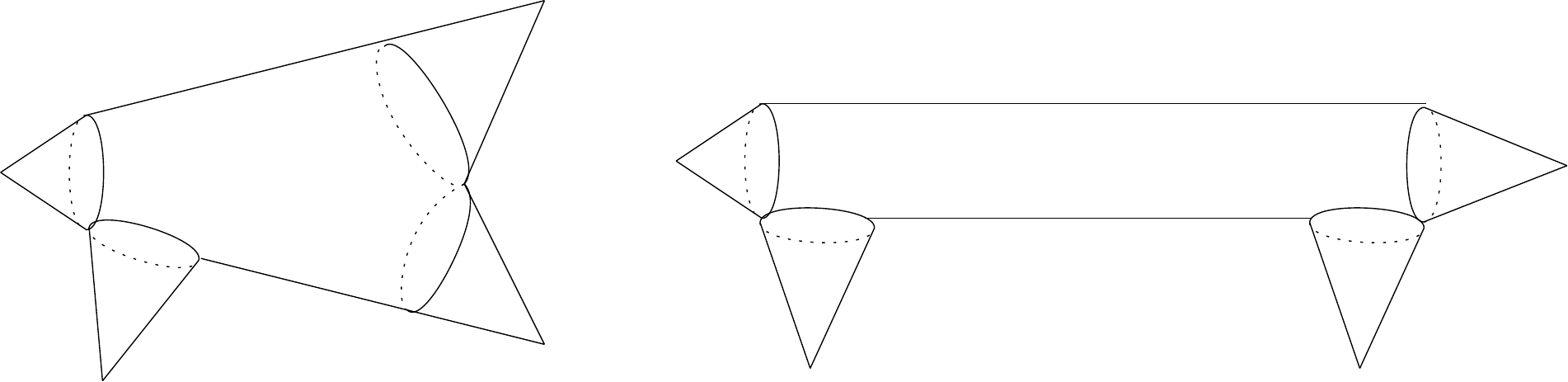}
\caption{\label{fig:rn}Change of measure from the cone to the cylinder}
\end{figure}

We can sample the radial part $(B_s)_{0\leq s\leq t}$ by the independent sum $B_s=Br_s+\frac{\delta}{\sqrt{t}}s$ where $(Br_s)_{0\leq s\leq t}$ is a standard Brownian bridge and $\delta\sim\mc{N}(0,1)$ (see Figure \ref{fig:field_decomposition}). We write $(\tilde{B}_s)_{0\leq s\leq t}$ the process on $\R$ where
\begin{enumerate}
\item $(\tilde{B}_{-s})_{s\geq 0}$ and $(\tilde{B}_s)_{s\geq t}$ are independent Brownian motions.
\item $(\tilde{B}_s)_{0\leq s\leq t}$ is a Brownian bridge in $[0,t]$ independent of the two other processes.
\end{enumerate}
 Similarly, we write $\tilde{Z}_t$ for the GMC mass defined similarly as $Z_t$ but with $\tilde{B}$ instead of $B$. The result will follow from an analysis of the behaviour of $\tilde{Z}_t$.
 
\begin{figure}[h!]
 \centering
 \includegraphics[scale=0.6]{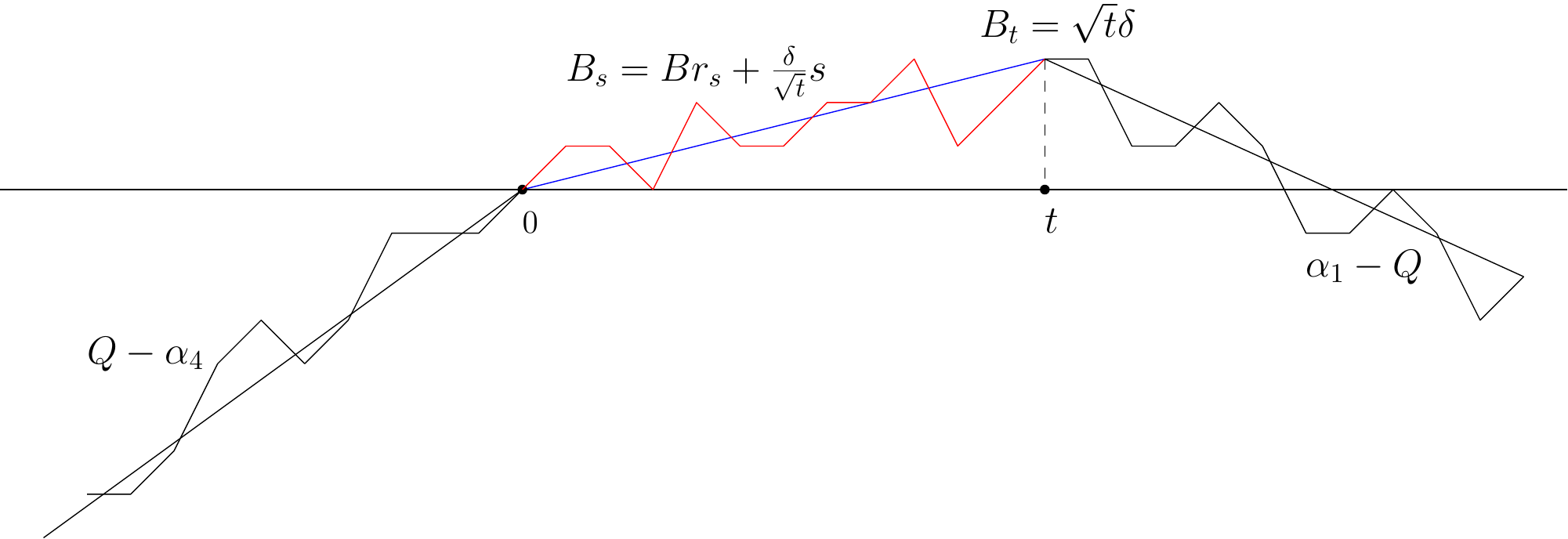}
 \caption{\label{fig:field_decomposition}The radial process in $(0,t)$ is the independent sum of a Brownian bridge (red) and a random drift (blue).}
 \end{figure}

 Let $\eta\in(0,1/2)$. We split $\tilde{Z}_t$ into three parts and write $\tilde{Z}_t=\tilde{L}_t+\tilde{C}_t+\tilde{R}_t$ where $\tilde{L}_t$, $\tilde{C}_t$ and $\tilde{R}_t$ are obtained by restricting the domain of integration to $(-\infty,t^{1/2-\eta})\times\S^1$, $(t^{1/2-\eta},t-t^{1/2-\eta})\times\S^1$ and $(t-t^{1/2-\eta},\infty)\times\S^1$ respectively. We define $Z_t=L_t+C_t+R_t$ similarly. These random variables are the ``left", ``central", and ``right" parts of the $\tilde{Z}_t$ and $Z_t$. 

For $b>0$, we introduce the event $\tilde{A}_{b,t}:=\left\lbrace\underset{0\leq s\leq t}{\sup}\tilde{B}_s \le b\right\rbrace$. This event has probability
\[\P(\tilde{A}_{b,t})=1-e^{-2b^2/t}=:f(b/\sqrt{t}).\]
Notice that $\underset{x\to\infty}{\lim}f(x)=1$ and $f(x)\underset{x\to0}{\sim}2x^2$.

Conditioning on $\tilde{A}_{b,t}$, the processes $(b-\tilde{B}_s)_{0\leq s\leq t/2}$ and $(b-\tilde{B}_{t-s})_{0\leq s\leq t/2}$ are absolutely continuous with respect to a BES$_b$(3)-process. Hence there exists $\eta'>0$ such that with high probability as $t\to\infty$, we have $\underset{t^{1/2-\eta}\leq s\leq t-t^{1/2-\eta}}{\sup}\tilde{B_s}\leq -t^{1/2-\eta'}$. It follows that $\tilde{C}_t\to0$ in probability as $t\to\infty$ when conditioned on $\tilde{A}_{b,t}$.

Let $\P_b$ the law of a field $X(s,\theta)=B_s+Y(s,\theta)$ where
\begin{enumerate}
\item $Y$ is a standard lateral noise.
\item $(B_{-s})_{s\geq0}$ is a standard Brownian motion.
\item $(b-B_s)_{s\geq0}$ is a BES$_b$(3)-process independent of $(B_{-s})_{s\geq0}$.
\end{enumerate}
We now describe the behaviour of $\tilde{L}_t$ and $\tilde{R}_t$.
%\begin{lemma}
%Under $\P_b$, the pair of random variables $(\tilde{L}_t,\tilde{R}_t)$ converges in distribution to the pair $(L_\infty,R_\infty)$ whose law is defined by
%\begin{enumerate}
%\item $L_\infty\overset{\mathrm{law}}{=}\int_{\cyl}e^{\gamma(B_s-(\alpha_4-Q)s1_{s\leq0}+\alpha_3G(0,s+i\theta))}dM^\gamma(s,\theta)$ where the field has law $\P_b$.
%\item $R_\infty\overset{\mathrm{law}}{=}\int_{\cyl}e^{\gamma(B_s+(\alpha_1-Q)s1_{s\leq0}+\alpha_2G(0,s+i\theta))}dM^\gamma(s,\theta)$ where the field has law $\P_b$
%\item $L_\infty$ and $R_\infty$ are independent of each other and are non-trivial and integrable.
%\end{enumerate}
%\end{lemma}
%\begin{proof}
 On $\tilde{A}_{b,t}$, the law of the process $(b-\tilde{B}_s)_{0\leq s\leq t^{1/2-\eta}}$ is absolutely continuous with respect to that of a BES$_b$(3)-process, and the Radon-Nikodym derivative tends to 1 a.s. and in $L^1$ as $t\to\infty$ (see e.g. \cite{MY} exercise 9.4). Hence the pair of processes $((b-\tilde{B}_s)_{0\leq s\leq t^{1/2-\eta}},(b-\tilde{B}_{t-s})_{0\leq s\leq t^{1/2-\eta}})$ converges in distribution to a pair of BES$_b$(3)-processes, and it is clear that these limit processes are independent of each other.
 
 As for the angular part, notice that for all $s<t^{1/2-\eta}$ and $s'>t-t^{1/2-\eta}$, we have for all $\theta,\theta'\in\S^1$,
 \begin{equation}
 \label{eq:green_estimate}
H(s+i\theta,s'+i\theta')=\log\frac{1}{|1-e^{-(s'-s)-i(\theta'-\theta)}|}\leq\log\frac{1}{1-e^{-(t-2t^{1/2-\eta})}}=O(e^{-t/2})
 \end{equation}
 
 Now let $Y^+,Y^-$ be independent lateral noises on $\cyl$ and define $Y'(s,\theta):=Y^+(s,\theta)1_{s<t/2}+Y^-(s,\theta)1_{s\geq t/2}$. Let $\tilde{L}_t^-$ (resp $\tilde{R}_t^+$) be the random variable defined like $\tilde{L}_t$ (resp. $\tilde{R}_t^-$) except we use $Y'$ rather than $Y$ for the lateral noise. Then under $\tilde{A}_{b,t}$, the pair $(\tilde{L}_t^-,\tilde{R}_t^+)$ converges in distribution to a pair of independent random variables $(L_\infty,R_\infty)$ with
\begin{equation*}
\begin{aligned}
&L_\infty\overset{\mathrm{law}}{=}\int_{\cyl}e^{\gamma(B_s-(\alpha_4-Q)s1_{s\leq0}+\alpha_3G(0,s+i\theta))}dM^\gamma(s,\theta)\\
&R_\infty\overset{\mathrm{law}}{=}\int_{\cyl}e^{\gamma(B_s-(\alpha_1-Q)s1_{s\leq0}+\alpha_2G(0,s+i\theta))}dM^\gamma(s,\theta)
\end{aligned}
\end{equation*} 
where the field is sampled from $\P_b$ in both cases. 
 
  Using the estimate \eqref{eq:green_estimate} and Kahane's convexity inequality, we have for all $c\in\R$
 \begin{equation*}
 \begin{aligned}
\E\left[\exp\left(-\mu e^{\gamma c}(\tilde{L}_t+\tilde{R}_t)\right)|\tilde{A}_{b,t}\right]
&=\E_b\left[\mc{E}_t\exp\left(-\mu e^{\gamma c}(\tilde{L}_t^-+\tilde{R}_t^+)\right)\right](1+O(e^{-t/2}))\\
&\underset{t\to\infty}{\to}\E_b\left[\exp(-\mu e^{\gamma c}(L_\infty+R_\infty))\right]\\
&=\E_b\left[\exp(-\mu e^{\gamma c}L_\infty)\right]\E_b\left[\exp(-\mu e^{\gamma c}R_\infty)\right]
\end{aligned}
 \end{equation*}

%\end{proof}
Putting pieces together, we find for all $c\in\R$
\begin{equation*}
\begin{aligned}
\underset{t\to\infty}{\lim}\E\left[\exp(-\mu e^{\gamma c}\tilde{Z}_t)|\tilde{A}_{b,t}\right]
&=\underset{t\to\infty}{\lim}\E\left[\exp(-\mu e^{\gamma c}\tilde{L}_t)\exp(-\mu e^{\gamma c}\tilde{C}_t)\exp(-\mu e^{\gamma c}\tilde{R}_t)|\tilde{A}_{b,t}\right]\\
&=\E_b\left[\exp(-\mu e^{\gamma c}L_\infty)\right]\E_b\left[\exp(-\mu e^{\gamma c}R_\infty)\right]
\end{aligned}
\end{equation*}

%Following the argument of \cite{DKRV2}, the sequence of functions $\E\left[\exp(-\mu e^{\gamma c}\tilde{Z}_t)1_{A_{b,t}}\right]$ converges uniformly in $b$ to $\E\left[\exp(-\mu e^{\gamma c}\tilde{Z_t})\right]$ for all $t\geq0$, hence
%\begin{equation}
%\begin{aligned}
%\underset{t\to\infty}{\lim}t\E\left[\exp(-\mu e^{\gamma c}\tilde{Z}_t)\right]
%&=\underset{b\to\infty}{\lim}\underset{t\to\infty}{\lim}t\E\left[\exp(-\mu e^{\gamma c}\tilde{Z}_t|A_{b,t}\right]\P(A_{b,t})\\
%&=2\underset{b\to\infty}{\lim}b^2\E_b\left[\exp(-\mu e^{\gamma c}L_\infty)\right]\E_b\left[\exp(-\mu e^{\gamma c}R_\infty)\right]
%\end{aligned}
%\end{equation}
To conclude we need to relate the behaviour of $\tilde{Z}_t$ with that of $Z_t$ as $t\to\infty$. To this end we will condition on the value of the drift $\delta\sim\mc{N}(0,1)$. For fixed $\delta\in\R$, we have $\frac{\delta}{\sqrt{t}}t^{1/2-\eta}=\delta t^{-\eta}$, and this will be sufficient to show that up to time $t^{1/2-\eta}$, the radial part of the GFF $(B_{t-s}-\frac{\delta}{\sqrt{t}}s)_{0\leq s\leq t^{1/2-\eta}}$ does not ``feel" the drift and therefore looks like a Brownian motion started from $\sqrt{t}\delta$. More precisely, we have 
\[e^{-\gamma|\delta|t^{-\eta}}\tilde{R}_t\leq e^{-\gamma\sqrt{t}\delta}R_t\leq e^{\gamma|\delta|t^{-\eta}}\tilde{R}_t\]
Taking expectations and rescaling $\delta$ by $t^{-1/2}$, we get for all $c\in\R$
\begin{equation*}
\begin{aligned}
&\sqrt{t}\E\left[e^{(\alpha_1+\alpha_2-Q)B_t}\exp(-\mu e^{\gamma c}R_t)|\tilde{A}_{b,t}\right]\\
%&=\sqrt{t}\int_\R\E\left[\exp(-\mu e^{\gamma(c+\sqrt{t}\delta+\delta O(t^{-\eta}))}\tilde{R}_t)|\tilde{A}_{b,t}\right]\frac{e^{-\frac{\delta^2}{2}}}{\sqrt{2\pi}}d\delta\\
&\qquad\qquad=\int_\R e^{(\alpha_1+\alpha_2-Q)\delta}\E\left[\exp(-\mu e^{\gamma(c+\delta+\delta O(t^{-1/2-\eta}))}\tilde{R}_t)|\tilde{A}_{b,t}\right]\frac{e^{-\frac{t\delta^2}{2}}}{\sqrt{2\pi}}d\delta\\
&\qquad\qquad\underset{t\to\infty}{\to}\frac{1}{\sqrt{2\pi}}\int_\R e^{(\alpha_1+\alpha_2-Q)\delta}\E_b[\exp(-\mu e^{\gamma(c+\delta)}R_\infty]d\delta
\end{aligned}
\end{equation*}
where we applied the dominated convergence theorem in the last line. 
\begin{remark}
The take-out message of this computation is that as $t$ gets large the value of the radial part at $t$ is distributed like $\sqrt{t}\delta$, so when properly rescaled, its law converges vaguely to Lebesgue measure. Hence the field in the right part looks like a usual GFF plus a constant which is ``distributed" with Lebesgue measure, so $\delta$ plays the role of an extra zero mode in the limit. This translates the fact that we see two independent surfaces in the limit.
\end{remark}
Recalling the expression of the correlation function \eqref{eq:correl}, we make the change of variable $(c,\delta)=(u,v-u)$ (with Jacobian equal to 1) and find
\begin{equation}
\label{eq:lim_b}
\begin{aligned}
\sqrt{t}\int_\R &e^{Q\sigma c}\E\left[e^{(\alpha_1+\alpha_2-Q)B_t}\exp(-\mu e^{\gamma c}Z_t)|\tilde{A}_{b,t}\right]dc\\
&=\sqrt{t}\int_\R e^{Q\sigma c}\int_\R e^{(\alpha_1+\alpha_2-Q)\sqrt{t}\delta}\E\left[\exp(-\mu e^{\gamma c}Z_t)|\tilde{A}_{b,t}\right]\frac{e^{-\frac{\delta^2}{2}}}{\sqrt{2\pi}}d\delta dc\\
&\underset{t\to\infty}{\to}\frac{1}{\sqrt{2\pi}}\int_{\R^2}e^{(\alpha_1+\alpha_2-Q)(c+\delta)}e^{(\alpha_3+\alpha_4-Q)c}\E_b\left[\exp(-\mu e^{\gamma c}L_\infty)\right]\E_b\left[\exp(-\mu e^{\gamma (c+\delta)}R_\infty)\right]d\delta dc\\
&=\frac{1}{\sqrt{2\pi}}\left(\int_\R e^{(\alpha_3+\alpha_4-Q)u}\E_b\left[\exp(-\mu e^{\gamma u}L_\infty)\right]du\right)\left(\int_\R e^{(\alpha_1+\alpha_2-Q)v}\E_b\left[\exp(-\mu e^{\gamma v}R_\infty)\right]dv\right)
\end{aligned}
\end{equation}
Thus we have for each $b>0$
\begin{equation*}
\begin{aligned}
\underset{t\to\infty}{\lim}&t^{3/2}\int_\R e^{Q\sigma c}\E[\exp(-\mu e^{\gamma c}Z_t)|\tilde{A}_{b,t}]\P(\tilde{A}_{b,t})dc\\
&=\sqrt{\frac{2}{\pi}}b^2\left(\int_\R e^{(\alpha_1+\alpha_2-Q)u}\E_b[\exp(-\mu e^{\gamma u}R_\infty)]du\right)\left(\int_\R e^{(\alpha_3+\alpha_4-Q)v}\E_b[\exp(-\mu e^{\gamma v}L_\infty)]dv\right)
\end{aligned}
\end{equation*}

It is shown in \cite{DKRV2} that $b\E_b\left[\exp(-\mu e^{\gamma v}L_\infty)\right]$ has a non-trivial limit as $b\to\infty$ and, exchanging limits, the authors conclude that
\begin{equation}
\label{eq:some_limit}
\underset{b\to\infty}{\lim}b\E_b\left[\exp(-\mu e^{\gamma v}L_\infty)\right]=\underset{t\to\infty}{\lim}\sqrt{\frac{\pi t}{2}}\E\left[\exp(-\mu e^{\gamma v}L_t)\right]
\end{equation}
On the other hand, one can recover the BES$_b$(3)-process by conditioning a Brownian motion with negative drift to stay below $b$ forever and letting the drift tend to 0. More precisely, if $\tau_{\alpha,b}=\inf\{s\geq0,\;B_s+(\alpha-Q)s\geq b\}$, then we have $\P(\tau_{\alpha,b}=\infty)\underset{\alpha\to Q^-}{\sim}2(Q-\alpha)b$. Now adding the drift $\alpha-Q$ in the definition of $L_\infty$ gives the correlation function $\frac{1}{2}C_\gamma(\alpha,\alpha_3,\alpha_4)$. In the end (see \cite{Bav} for details), we have the alternative characterisation of the limit \eqref{eq:some_limit}
\begin{equation}
\label{eq:lim_is_dozz}
\underset{b\to\infty}{\lim}b\int_\R e^{(\alpha_3+\alpha_4-Q)v}\E_b\left[\exp(-\mu e^{\gamma v}L_\infty)\right]dv=-\frac{1}{4}\underset{\alpha\to Q}{\lim}\frac{C_\gamma(\alpha,\alpha_3,\alpha_4)}{\alpha-Q}=-\frac{1}{4}\partial_1C_\gamma(\alpha,\alpha_3,\alpha_4)
\end{equation}

A similar statement holds for the $L_\infty$ term, so we have
\begin{equation*}
\begin{aligned}
\underset{b\to\infty}{\lim}\underset{t\to\infty}{\lim}t^{3/2}\int_\R e^{Q\sigma c}\E[\exp(-\mu e^{\gamma c}Z_t)1_{\tilde{A}_{b,t}}]dc=\frac{1}{8\sqrt{2\pi}}\partial_3C_\gamma(\alpha_1,\alpha_2,Q)\partial_1C_\gamma(Q,\alpha_3,\alpha_4)
\end{aligned}
\end{equation*}

 From \cite{DKRV2}, the family of functions $\E[\exp(-\mu e^{\gamma c}Z_t)1_{\tilde{A}_{b,t}}]$ converges uniformly with respect to $t$ as $b\to\infty$, enabling us to exchange limits in $b$ an in $t$. Hence
\begin{equation*}
\begin{aligned}
\underset{t\to\infty}{\lim}t^{3/2}\int_\R e^{Q\sigma c}\E[\exp(-\mu e^{\gamma c}Z_t)]dc
&=\underset{b\to\infty}{\lim}\underset{t\to\infty}{\lim}t^{3/2}\int_\R e^{Q\sigma c}\E[\exp(-\mu e^{\gamma c}Z_t)1_{\tilde{A}_{b,t}}]dc\\
&=\frac{1}{8\sqrt{2\pi}}\partial_3C_\gamma(\alpha_1,\alpha_2,Q)\partial_1C_\gamma(Q,\alpha_3,\alpha_4)
\end{aligned}
\end{equation*}
Recall equation \eqref{eq:correl} to find
\begin{equation*}
\begin{aligned}
\langle V_{\alpha_1}(0)V_{\alpha_2}(z)&V_{\alpha_3}(1)V_{\alpha_4}(\infty)\rangle\\
&\underset{z\to0}{\sim}\frac{1}{4\sqrt{2\pi}}|z|^{2(\frac{Q^2}{4}-\Delta_1-\Delta_2)}|1-z|^{-\alpha_2\alpha_3}(\log\frac{1}{|z|})^{-3/2}\partial_3C_\gamma(\alpha_1,\alpha_2,Q)\partial_1C_\gamma(Q,\alpha_3,\alpha_4)
\end{aligned}
\end{equation*}
\end{proof}

	\subsection{Critical case}
	\label{subsec:proof_critical}
We conclude the proof of Theorem \ref{thm:result} by proving the asymptotic formula \eqref{eq:equal}, i.e. we assume $\alpha_1+\alpha_2=Q$. 
\begin{proof}[Proof of \eqref{eq:equal}]
The analysis of Section \ref{subsec:proof} fails only because the limit identified in \eqref{eq:lim_b} becomes trivial in this case because the triplet $(\alpha_1,\alpha_2,Q)$ violates the first Seiberg bound. Geometrically, the random variable $R_t$ does not have enough mass as $t\to\infty$ in order to produce another surface.

However, the analysis is still valid up to equation \eqref{eq:rn} and the expression of $Z_t$ is the same with this new set of parameters. Consider the same decomposition $Z_t=L_t+C_t+R_t$ and write $\xi_t:=C_t+R_t$ with the same $\eta>0$.

 As before, we condition the radial part no to exceed a given value. For $b>0$, we define the event
\[A_{b,t}:=\left\lbrace\underset{0\leq s\leq t}{\sup}B_s \le b\right\rbrace\]
It is well-known that 
\[\P(A_{b,t})=\sqrt{\frac{2}{\pi}}\int_0^{b/\sqrt{t}}e^{-\frac{x^2}{2}}dx=:g(b/\sqrt{t})\]
Notice that $g(x)\underset{x\to\infty}{\to}1$ and $g(x)\underset{x\to0}{\sim}\sqrt{\frac{2}{\pi}}x$. The process $(B_s)_{s\geq0}$ conditioned on $A_{b,t}$ has the law of a BES$_b$(3)-process. Repeating the argument of the previous subsection, we find that $\xi_t\to0$ in probability as $t\to\infty$ when conditioned on $A_{b,t}$.

As for the radial part, we have the following estimate for $s<t^{1/2-\eta}$ and $\theta\in\S^1$
\[|H(s+i\theta,t+i\phi)|=\log\frac{1}{|1-e^{-(t-s)-i(\phi-\theta)}|}=O(e^{-t/2})\]

Let $\P_b$ be the law of the field when the radial part $(B_s)_{s\geq0}$ is conditioned not to exceed $b$. Applying exactly the same framework as before, we have for all $\kappa>0$
\begin{equation}
\label{eq:lim_critical_dozz}
\begin{aligned}
\underset{t\to\infty}{\lim}\sqrt{t}\E\left[Z_t^{-\kappa}\right]
&=\underset{t\to\infty}{\lim}\sqrt{t}\E\left[L_t^{-\kappa}\right]\\
&=\sqrt{\frac{2}{\pi}}\underset{b\to\infty}{\lim}b\E_b\left[L_t^{-\kappa}\right]\\
\end{aligned}
\end{equation}
So it follows from the result of \cite{Bav} that
\begin{equation}
\underset{t\to\infty}{\lim}\int_\R e^{-Q\sigma c}\E\left[\exp\left(-\mu e^{\gamma c}\int_{\widehat{\C}}e^{\gamma\sum_{i=1}^4\alpha_iG(z_i,\cdot)}dM^\gamma\right)\right]dc=-\frac{1}{2\sqrt{2\pi}}\partial_1C_\gamma(Q,\alpha_3,\alpha_4)
\end{equation}
which concludes the proof.
\end{proof}

\subsection{Proof of Theorem \ref{theo:result_gmc}} \label{subsec:gmc_proof}
As mentioned in Section \ref{subsec:result}, Theorem \ref{theo:result_gmc} follows easily from Theorem \ref{thm:result} by taking $\sigma$ to be arbitrary. We will use the notations in Section \ref{subsec:proof} and \ref{subsec:proof_critical}, outlining the differences with the Liouville case and leaving the details to the reader.

Let $(\alpha_1,\alpha_2,\alpha_3,\alpha_4)$ be such that the Seiberg bound is satisfied. 
%and suppose without loss of generality $\alpha_3+\alpha_4-Q>0$ (otherwise we have $\alpha_1+\alpha_2-Q>0$ and we can perform the analysis on the value of $\alpha_3+\alpha_4-Q$). 
If $\alpha_1+\alpha_2-Q< \kappa \gamma$, the previous analysis applies immediately modulo the obvious substitution $\frac{Q\sigma}{\gamma} \leftrightarrow \kappa$ in the relevant places. If $\alpha_1+\alpha_2-Q \ge \kappa \gamma$, however, we only apply Cameron-Martin to partially offset the positive drift in $[0, t]$ by $\kappa \gamma$, as motivated in Section \ref{subsec:main_idea}. This leads to
\begin{align} \label{eq:rn2}
\E \left[ W_t^{-\kappa}\right]
= e^{-\kappa \gamma (\alpha_1 + \alpha_2 - Q)t + \frac{\kappa^2 \gamma^2}{2}t }\E \left[\left(e^{-\gamma \left(B_t + (\alpha_1 + \alpha_2 - Q - \kappa \gamma)t\right)}\widehat{Z}_t\right)^{-\kappa} \right]
\end{align}

\noindent where $W_t$ is defined in \eqref{eq:W_t} and $\widehat{Z}_t$ is defined suitably. Notice that \eqref{eq:rn2} is identical to \eqref{eq:rn} when $\alpha_1 + \alpha_2 - Q = \kappa \gamma$, the analysis of which is similar to that of Section \ref{subsec:proof_critical} except that here we consider the event
\begin{align*}
A_{b, t}' := \left\{ \sup_{0 \le s \le t} (B_{t-s} - B_t) \le b\right\}
\end{align*}

\noindent so that $L_t$ becomes irrelevant in the limit while $R_t$ survives as $t \to \infty$ instead. The case $\alpha_1 + \alpha_2 - Q > \kappa \gamma$ is straightforward because $e^{-\gamma \left(B_t + (\alpha_1 + \alpha_2 - Q - \kappa \gamma)t\right)}\widehat{Z}_t$ is an integral involving the exponentiation of a two-sided Brownian motion with negative drifts in both directions, and we can even obtain \eqref{eq:prob_rep2_4} by dominated convergence directly.

%one would expect to be in the supercritical case of Theorem \ref{thm:result} but the change of variable $(c,\delta)=(u,v-u)$ gives something trivial. Actually, we need not introduce the extra variable $\delta$ but simply notice that at the Cameron-Martin stage, the correlation function reads
%\begin{equation}
%\begin{aligned}
%&\int_\R e^{Q\sigma c}\E\left[e^{(\alpha_1+\alpha_2-Q)B_t-\frac{1}{2}(\alpha_1+\alpha_2-Q)^2t}\exp(-\mu e^{\gamma c}Z_t)\right]dc\\
%&\qquad=\gamma^{-1}\mu^{-\frac{Q\sigma}{\gamma}}\Gamma\left(\frac{Q\sigma}{\gamma}\right)e^{\frac{1}{2}(\alpha_1+\alpha_2-Q)^2t}\E\left[e^{(\alpha_1+\alpha_2-Q)B_t}Z_t^{-\frac{\alpha_1+\alpha_2-Q}{\gamma}}\right]\\
%&\qquad=\gamma^{-1}\mu^{-\frac{Q\sigma}{\gamma}}\Gamma\left(\frac{Q\sigma}{\gamma}\right)e^{\frac{1}{2}(\alpha_1+\alpha_2-Q)^2t}\E\left[\left(e^{-\gamma B_t}Z_t\right)^{-\frac{\alpha_1+\alpha_2-Q}{\gamma}}\right]
%\end{aligned}
%\end{equation}
%Recalling the definition of $Z_t$ from the previous subsection, we find that this change of measure is the same as pinning down the Brownian motion at $t$ rather than 0, which means that we are exactly in the critical case of Theorem \ref{thm:result}, so the result follows.

%As mentioned in \ref{subsec:main_idea}, the case $\alpha_1+\alpha_2-Q>Q\sigma$ is straightforward.
\qed

%%%%%%%%%%%%%%%%%%%%%%%%%%%%%%%%%%%%%%%%%%%%%%%%%%%%%%%%%%%%
\section{Proof of Theorem \ref{theo:prob_rep1}} \label{sec:proof_gmc}
%We omit the proof of Theorem \ref{theo:result_gmc} here as it is essentially identical to that of Theorem \ref{thm:result}.%
The rest of this paper is devoted to the proof of Theorem \ref{theo:prob_rep1} which gives probabilistic representations for the limits \eqref{eq:lim_is_dozz} and \eqref{eq:lim_critical_dozz} for which we do not have exact formulae outside of the Liouville case. We will not discuss \eqref{eq:prob_rep2_4} which is basically explained in the last section.
	
	%%%%%%%%%%%%%%%%%%%%%%%%%%%%%%%%%%%%%%%%%%%%%%%%%%%%%%%%%
	\subsection{Infinite series representation of $E_{\kappa}^{\gamma}(\alpha_1, \alpha_2, \alpha_3, \alpha_4)$}
In order to obtain Theorem \ref{theo:prob_rep1} we need the following intermediate result.
\begin{lemma}\label{lem:prob_rep}
Fix $h > 0$. When $\alpha_1 + \alpha_2 - Q \in [0, \kappa \gamma]$, the constant $E_{\kappa}^{\gamma}(\alpha_1, \alpha_2, \alpha_3, \alpha_4)$ in \eqref{eq:gmc_const} has the following representations.
\begin{itemize}[leftmargin=*]
\item If $\alpha_1 + \alpha_2 - Q = 0$, we have
\begin{align}\label{eq:prob_rep1_1}
E_{\kappa}^{\gamma}(\alpha_1, \alpha_2, \alpha_3, \alpha_4)
= \sqrt{\frac{2}{\pi}}\sum_{n=1}^\infty nh e^{-\kappa \gamma nh} \E  \left[ \left(F_{\alpha_3, \alpha_4}(nh, \beta_{\cdot}^{nh})\right)^{-\kappa} 1_{\{\min_{s > 0} \beta_s^{nh} \le h\}}\right]
\end{align}

\noindent where $(\beta_s^u)_{s \ge 0}$ is a $\mathrm{BES}_u(3)$-process.

\item If $\alpha_1 + \alpha_2 - Q \in (0, \kappa \gamma)$, 
\begin{align}\label{eq:prob_rep1_2}
E_{\kappa}^{\gamma}(\alpha_1, \alpha_2, \alpha_3, \alpha_4)
=\sqrt{\frac{2}{\pi}} \sum_{n=1}^\infty \frac{ nh e^{-(\kappa \gamma - (\alpha_1 + \alpha_2 - Q))nh}}{(\alpha_1 + \alpha_2 - Q)^2 } 
\E \left[ \frac{1_{\{\min_{s > 0} \beta_{L, s}^{nh} \le h\}\cup \{\min_{s > 0} \beta_{R, s}^{\mathcal{T}} \le h\}}}{\left(F_{\alpha_3, \alpha_4}(nh, \beta_{L, \cdot}^{nh}) + F'_{\alpha_2, \alpha_1}(\mathcal{T}, \beta_{R, \cdot}^{\mathcal{T}})\right)^{\kappa}}
\right]
\end{align}

\noindent where $(\beta_{L, s}^u)_{s \ge 0}$ and $(\beta_{R, s}^\mathcal{T})_{s\ge0}$ are independent $\mathrm{BES}_u(3)$- and $\mathrm{BES}_{\mathcal{T}}(3)$-processes respectively with $\mathcal{T} \sim \mathrm{Gamma}(2, \alpha_1 + \alpha_2 - Q)$, and $F'$ is an independent copy of $F$.

\item If $\alpha_1 + \alpha_2 - Q = \kappa \gamma$, 
\begin{align}\label{eq:prob_rep1_3}
E_{\kappa}^{\gamma}(\alpha_1, \alpha_2, \alpha_3, \alpha_4)
= \sqrt{\frac{2}{\pi}}\sum_{n=1}^\infty nh e^{-\kappa \gamma nh} \E  \left[ \left(F_{\alpha_2, \alpha_1}(nh, \beta_{\cdot}^{nh})\right)^{-\kappa} 1_{\{\min_{s > 0} \beta_s^{nh} \le h\}}\right]
\end{align}
\noindent where $(\beta_s^u)_{s \ge 0}$ is a $\mathrm{BES}_u(3)$-process.

\end{itemize}
\end{lemma}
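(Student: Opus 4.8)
The plan is to push the decomposition of the proofs of Theorems~\ref{thm:result} and~\ref{theo:result_gmc} one step further, replacing the soft ``condition on $\{\sup_{s\le t}B_s\le b\}$ and let $b\to\infty$'' argument by an \emph{exact} partition of path space according to the value of the running maximum of the radial Brownian motion. After the radial/angular decomposition of Lemma~\ref{lem:radial_angular} and the (partial) Cameron--Martin tilt of \eqref{eq:rn}--\eqref{eq:rn2}, the first thing I would record is the algebraic identity linking the surviving limiting mass to the functional $F$ of \eqref{eq:functional}: if the radial part is conditioned to stay below a level $a$, then $a-B$ is a $\mathrm{BES}_a(3)$-process $\beta^a$ (the standard Doob transform, already used in the form ``$(B_s)$ conditioned on $A_{b,t}$ is a $\mathrm{BES}_b(3)$''), and substituting $B=a-\beta^a$ into the total mass shows that the left surface mass equals, exactly, $e^{\gamma a}F_{\alpha_3,\alpha_4}(a,\beta^a)$ and the right one $e^{\gamma a}F_{\alpha_2,\alpha_1}(a,\beta^a)$. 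Raising to $-\kappa$ is precisely what produces the factor $e^{-\kappa\gamma a}$.

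For the critical case $\alpha_1+\alpha_2=Q$ I would write, for fixed mesh $h>0$, the trivial partition $1=\sum_{n\ge1}1_{\{M_t\in[(n-1)h,nh)\}}$ with $M_t=\sup_{0\le s\le t}B_s$, and treat each bin through the event $\{M_t<nh\}$. Three ingredients then assemble the $n$-th summand: the entropic weight $\sqrt t\,\P(M_t<nh)\to\sqrt{2/\pi}\,nh$ (from $\P(M_t\le a)\sim\sqrt{2/\pi}\,a/\sqrt t$); the convergence, under $\P(\cdot\mid M_t<nh)$, of $nh-B$ to a $\mathrm{BES}_{nh}(3)$-process $\beta^{nh}$ together with the localisation of the mass near the neck, which turns $W_t^{-\kappa}$ into $e^{-\kappa\gamma nh}F_{\alpha_3,\alpha_4}(nh,\beta^{nh})^{-\kappa}$; and the translation of the lower bin-boundary $\{M_t\ge(n-1)h\}$ into the indicator $\{\min_s\beta^{nh}_s\le h\}$. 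Summing gives \eqref{eq:prob_rep1_1}, and since the partition is exact for every $h$ the resulting value is automatically $h$-independent, which is exactly the content of the lemma. The boundary case $\alpha_1+\alpha_2-Q=\kappa\gamma$ is identical after the symmetric conditioning on $A_{b,t}'$ of Section~\ref{subsec:gmc_proof}, which kills the left surface and keeps the right one, replacing $F_{\alpha_3,\alpha_4}$ by $F_{\alpha_2,\alpha_1}$ and giving \eqref{eq:prob_rep1_3}.

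In the generic range $\alpha_1+\alpha_2-Q\in(0,\kappa\gamma)$ the full tilt is used and \emph{both} surfaces survive, so the localised mass is the sum of two independent $\mathrm{BES}(3)$-functionals, which is why the sum $F_{\alpha_3,\alpha_4}(nh,\beta_L^{nh})+F'_{\alpha_2,\alpha_1}(\mathcal T,\beta_R^{\mathcal T})$ sits inside a single $(-\kappa)$-power. The left surface is still pinned to the global-maximum level $nh$ (now carrying the tilted weight $e^{-(\kappa\gamma-(\alpha_1+\alpha_2-Q))nh}$), while the right surface carries an independent level $\mathcal T$; the law $\mathcal T\sim\mathrm{Gamma}(2,\alpha_1+\alpha_2-Q)$ and the prefactor $1/(\alpha_1+\alpha_2-Q)^2$ arise from integrating the drift reweighting $e^{(\alpha_1+\alpha_2-Q)\,\cdot}$ against the (asymptotically uniform) location of the maximum in $[0,t]$ — the rigorous incarnation of the ``extra zero mode'' $\delta$ produced by the bridge/drift split $B_s=Br_s+\frac{\delta}{\sqrt t}s$ in the proof of \eqref{eq:larger}. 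The subsequent passage to the product form \eqref{eq:prob_rep2_2} of Theorem~\ref{theo:prob_rep1} then factorises this sum via a Beta-integral identity, producing the factor $B\!\left(\tfrac{\alpha_1+\alpha_2-Q}{\gamma},\kappa-\tfrac{\alpha_1+\alpha_2-Q}{\gamma}\right)$.

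The main obstacle is to make the termwise limit rigorous and to justify the two reductions that the heuristic hides. Concretely one must (i) prove the decoupling of the left and right contributions and the negligibility of the central part $C_t$, which I would do exactly as in the proof of \eqref{eq:larger}, via the Green's-function estimate \eqref{eq:green_estimate} and Kahane's convexity inequality, but now applied level by level; and (ii) produce a bound on the $n$-th summand, uniform in $t$ and summable in $n$, in order to interchange $\lim_{t\to\infty}$, $\sum_n$ and $\E$ — this is where the polynomial control of $\P(M_t<nh)$ and the negative-moment estimates of GMC (ensuring $F(\cdot)^{-\kappa}$ is integrable against the $\mathrm{BES}(3)$-law) must be combined. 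The identification of the $\mathrm{Gamma}(2)$ law in the generic case, i.e.\ that the relative heights of the two surfaces convolve correctly, is the most delicate point and the place where the argument genuinely goes beyond the $b\to\infty$ scheme of Theorem~\ref{thm:result}.
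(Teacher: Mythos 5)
Your overall strategy --- partitioning path space according to the level of the running maximum $M_t=\sup_{0\le s\le t}B_s$ into bins of width $h$, taking termwise limits via the Bessel description of the conditioned radial process, reading the lower bin edge $\{M_t\ge (n-1)h\}$ as the indicator $\{\min_{s>0}\beta_s^{nh}\le h\}$, and interchanging $\sum_n$ and $\lim_{t\to\infty}$ by a bound on the $n$-th summand that is uniform in $t$ and summable in $n$ --- is exactly the paper's proof. The endpoint cases $\alpha_1+\alpha_2-Q\in\{0,\kappa\gamma\}$ are handled just as you describe, by importing the dominated convergence argument of \cite{DKRV2} (Proposition 3.1 and Lemma 3.2 there) and its time-reversed analogue, and the decoupling/negligibility of the central part is indeed settled by \eqref{eq:green_estimate} and Kahane's inequality applied level by level.

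The one step you flag as delicate is, however, also the one where your proposed mechanism is wrong. You attribute the $\mathrm{Gamma}(2,\alpha_1+\alpha_2-Q)$ law of $\mathcal T$ to ``integrating the drift reweighting $e^{(\alpha_1+\alpha_2-Q)\,\cdot}$ against the asymptotically uniform location of the maximum''. Integrating an exponential tilt against a flat density produces an exponential law, not a $\mathrm{Gamma}(2)$, and the time at which the maximum is attained plays no role in the argument. The extra factor of $\mathcal T$ in the density $x\,e^{-(\alpha_1+\alpha_2-Q)x}$ comes from the entropic repulsion of the \emph{terminal value} from the barrier: conditioning on $B_t=x$, the paper computes $\P\left(A_{n,t}\mid B_t=x\right)\sim\frac{2}{t}\,n(n-x)$, and it is the factor $(n-x)$ --- the starting point of the right Bessel process $\beta_{R,\cdot}^{\,n-x}$ --- multiplied by the Cameron--Martin weight $e^{-(\alpha_1+\alpha_2-Q)(n-x)}$ and by the asymptotically Lebesgue law of $B_t$ that yields, after the change of variable $x\mapsto n-x$, the $\mathrm{Gamma}(2)$ density and the normalisation $(\alpha_1+\alpha_2-Q)^{-2}$ in \eqref{eq:prob_rep1_2} (the companion factor $n$ supplies the $nh$ in the summand). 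As stated, your heuristic would produce $\mathcal T\sim\mathrm{Exp}(\alpha_1+\alpha_2-Q)$ and a wrong constant, so this is not cosmetic: the conditional-on-$B_t$ computation is precisely the piece that must be supplied to close the proof of the middle case.
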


\begin{proof}
For the sake of brevity we only sketch the proof for the case $h = 1$ here and leave the details to the reader. The key idea is the partitioning of
\begin{align*}
A_{n, t}
= \left\{\sup_{0 \le s \le t} B_s \le n \right\}
= \bigcup_{k \le n} \left\{\sup_{0 \le s \le t} B_s \in [(k-1), k] \right\}
= \bigcup_{k \le n} \left\{\min_{0 \le s \le t} k - B_s \in [0, 1] \right\}.
\end{align*}

When $\alpha_1 + \alpha_2 - Q = 0$, our claim essentially follows from Proposition 3.1 and Lemma 3.2 in \cite{DKRV2}, where a dominated convergence argument (see the paragraph after Lemma 3.2 and Section 5.0.3 in that article) implies that the renormalised constant is given by
\begin{align*}
\sum_{n=1}^\infty \lim_{t \to \infty} \left(\sqrt{t} \E \left[L_t^{-\kappa} 1_{\{\min_{0 \le s \le t} n - B_s \le 1 \}} \big| A_{n, t}\right] \P(A_{n, t})\right)
= \sqrt{\frac{2}{\pi}}\sum_{n=1}^\infty  n \E_n \left[L_{\infty}^{-\kappa} 1_{\{\min_{s \ge 0} n - B_s \le 1\}}\right]
\end{align*}

\noindent which is equivalent to \eqref{eq:prob_rep1_1}. The proof of \eqref{eq:prob_rep1_3} is similar.

To apply the same dominated convergence approach to \eqref{eq:prob_rep1_2}, we need a control analogous to \cite[equation (3.18)]{DKRV2} when $\alpha_1 + \alpha_2 - Q \in (0, \kappa \gamma)$. Indeed the same argument there suggests that
\begin{align*}
t^{3/2} \E\left[ e^{(\alpha_1 + \alpha_2 - Q)B_t} (L_t + R_t)^{-\kappa} 1_{\{ \sup_{0 \le s \le t} B_s \in [(n-1), n]\}}\right]
\le C e^{-(\kappa \gamma - (\alpha_1 + \alpha_2 - Q))n}
\end{align*}

\noindent for some constant $C > 0$ independent of $t$ and $n$, and therefore $E_{\kappa}^{\gamma}(\alpha_1, \alpha_2, \alpha_3, \alpha_4)$ again has an infinite series representation of the form 
\begin{align}\label{eq:proof_prob_rep1}
\sum_{n=1}^\infty \lim_{t \to \infty} \left(t^{3/2} \E\left[ e^{(\alpha_1 + \alpha_2 - Q)B_t} (L_t + R_t)^{-\kappa} 1_{\{\sup_{0 \le s \le t} B_s \in [n-1, n] \}} \right]\right).
\end{align}

Let us highlight several observations. 
\begin{itemize}[leftmargin=*]
\item For every $n \in \N$, the event $\{\sup_{0 \le s \le t} B_s \in [n-1, n] \}$ may be replaced by 
\begin{align*}
\underbrace{\left\{\sup_{0 \le s \le t} B_s \le n\right\}}_{= A_{n, t} } \cap 
\underbrace{\left( \left\{\min_{0 \le s \le t^{1/2 - \eta}} n - B_s \le 1 \right\} \cup \left\{\min_{0 \le s \le t^{1/2-\eta}} n - B_t - (B_{t-s} - B_t) \le 1 \right\}\right)}_{=: \overline{A}_{n, t}}
\end{align*}

\noindent up to a cost of $o(1)$ for neglecting the unlikely event $\left\{\sup_{s \in [ t^{1/2-\eta}, t- t^{1/2-\eta}]} B_s \ge n-1\right\}$.

\item  Similar to the proof of Theorem \ref{thm:result}, if we condition on the event $A_{n, t}$ and $B_t = x$, then 
\begin{align*}
(n-B_s)_{0 \le s \le t^{1/2-\eta}},
\qquad (n - B_t - (B_{t-s} - B_t))_{0 \le s \le t^{1/2-\eta}}
\end{align*}
converge in distribution to independent $\mathrm{BES}_n(3)$- and $\mathrm{BES}_{n-x}(3)$-processes $(\beta_{L, s}^n)_{s \ge 0}$ and $(\beta_{R, s}^{n-x})_{s \ge 0}$ respectively. 
Consequently $L_t$ and $R_t$ converge in distribution to $e^{\gamma n}F_{\alpha_3, \alpha_4}(n, \beta_{L, \cdot}^n)$ and $e^{\gamma n}F'_{\alpha_2, \alpha_1}(n-x, \beta_{R, \cdot}^{n-x})$ respectively.

\end{itemize}

We now compute
\begin{align*}
& \E \left[1_{A_{n, t} \cap \overline{A}_{n, t}} \big| (B_s)_{s \in (-\infty, t^{1/2-\eta}] \cup [t-t^{1/2-\eta}, \infty)}\right] \\
& \qquad = 1_{ \left\{\min_{0 \le s \le t^{1/2-\eta}} n - B_s \le 1 \right\} \cup \left\{\min_{0 \le s \le t^{1/2-\eta}} n - B_t - (B_{t-s} - B_t) \le 1 \right\}}\\
& \qquad \qquad \times \P\left(A_{n, t} \big| (B_s)_{s \in (-\infty, t^{1/2-\eta}] \cup [t-t^{1/2-\eta}, \infty)} \right)
\end{align*}

\noindent where 
\begin{align*}
&\P\left(A_{n, t} \big| (B_s)_{s \in (-\infty, t^{1/2-\eta}] \cup [t-t^{1/2-\eta}, \infty)} \right)
= 1_{\{\sup_{0 \le s \le t^{1/2-\eta}} B_s \le n\}} 1_{\{\sup_{0 \le s \le t^{1/2-\eta}} B_{t-s} - B_t \le n - B_t\}}\\
& \qquad \qquad \qquad  \times \P\left(\sup_{t^{1/2-\eta} \le s \le  t - t^{1/2-\eta}} B_s \le n \bigg| B_{t^{1/2-\eta}}, B_{t-t^{1/2-\eta}}\right)
\end{align*}

\noindent and
\begin{align*}
\P\left(\sup_{t^{1/2-\eta} \le s \le  t - t^{1/2-\eta}} B_s \le n \bigg| B_{t^{1/2-\eta}}, B_{t-t^{1/2-\eta}}\right)
=  1- e^{-\frac{2}{t - 2t^{1/2 - \eta}}(n - B_{t^{1/2-\eta}})(n - B_t - (B_{t -t^{1/2-\eta}} - B_t))}
\end{align*}

\noindent is asymptotically $\frac{2}{t} (n - B_{t^{1/2-\eta}})(n - B_t - (B_{t -t^{1/2-\eta}} - B_t))$ when $t$ is large. In particular 
\begin{align*}
\P\left(A_{n, t} \big| B_t = x\right) \sim \frac{2}{t}n(n-x) + o(t^{-1}), \qquad t \to \infty.
\end{align*}

\noindent Substituting this into the summand in \eqref{eq:proof_prob_rep1}, we obtain
\begin{align*}
& \lim_{t \to \infty} t^{3/2} \int_{-\infty}^n 
\E \left[ e^{(\alpha_1 + \alpha_2 - Q)x} (L_t + R_t)^{-\kappa} 1_{\overline{A}_{n, t}} \bigg| A_{n, t}, B_t = x\right] \P(A_{n, t} \big| B_t = x) \P(B_t \in dx)\\
& = \frac{e^{(\alpha_1 + \alpha_2 - Q)n}}{\sqrt{2\pi}}\lim_{t \to \infty} t \int_{-\infty}^n 
\E \left[ e^{-(\alpha_1 + \alpha_2 - Q)(n-x)} (L_t + R_t)^{-\kappa} 1_{\overline{A}_{n, t}} \bigg| A_{n, t}, B_t = x\right] \P(A_{n, t} \big| B_t = x)  e^{-\frac{x^2}{2t}}dx\\
& = \frac{2 e^{(\alpha_1 + \alpha_2 - Q)n}}{\sqrt{2\pi}}\int_{-\infty}^n 
\E \left[  \frac{e^{-(\alpha_1 + \alpha_2 - Q)(n-x)}1_{\{\min_{s \ge 0} \beta_{L, s}^n \le 1\} \cup \{\min_{s \ge 0} \beta_{R, s}^{n-x} \le 1\}}}{(e^{\gamma n}F_{\alpha_3, \alpha_4}(n, \beta_{L, \cdot}^n) + e^{\gamma  n}F'_{\alpha_3, \alpha_4}(n-x, \beta_{R, \cdot}^{n-x}))^{\kappa}}\right] n(n-x)dx
\end{align*}

\noindent where the last line follows by dominated convergence, and is equal to
\begin{align*}
\sqrt{\frac{2}{\pi}}n e^{-(\kappa \gamma - (\alpha_1 + \alpha_2 - Q))n}\int_{0}^\infty 
\E \left[  \frac{1_{\{\min_{s \ge 0} \beta_{L, s}^n \le 1\} \cup \{\min_{s \ge 0} \beta_{R, s}^{x} \le 1\}}}{(F_{\alpha_3, \alpha_4}(n, \beta_{L, \cdot}^n) + F'_{\alpha_3, \alpha_4}(x, \beta_{R, \cdot}^{x}))^{\kappa}}\right] x e^{-(\alpha_1 + \alpha_2 - Q)x}dx
\end{align*}

\noindent so we are done.

\end{proof}

\begin{remark}
The careful reader may notice that the proof above when $\alpha_1 + \alpha_2 - Q \in (0, \kappa \gamma)$ differs slightly from that in Section \ref{subsec:proof} where one considers the event $\widetilde{A}_{n, t} = \left\{\sup_{0 \le s \le t} \widetilde{B}_s \le n\right\}$ instead of $A_{n, t}= \left\{\sup_{0 \le s \le t} B_s \le n\right\}$. The current approach, which addresses the partitioning of probability space instead of factorisation in the first place, may have the drawback that \eqref{eq:prob_rep1_2} does not give a product of two negative moments immediately but it allows for an easier side-by-side comparison with the analysis in \cite{DKRV2}.
\end{remark}

	%%%%%%%%%%%%%%%%%%%%%%%%%%%%%%%%%%%%%%%%%%%%%%%%%%%%%%%%%
	\subsection{Proof of Theorem \ref{theo:prob_rep1}}
The infinite series representation in Lemma \ref{lem:prob_rep} is reminiscent of Riemann sums. We now explain how to obtain the simplified expressions in Theorem \ref{theo:prob_rep1}.

\begin{proof}[Proof of \eqref{eq:prob_rep2_1} and \eqref{eq:prob_rep2_3}]
We begin with $\alpha_1 + \alpha_2 - Q = 0$. Fix some $N > 0$, and without loss of generality choose a sequence of $h \to 0^+$ such that $h$ always divides both $N^{-1}$ and $N$. Then by Lemma \ref{lem:prob_rep} we have
\begin{align}\label{eq:proof_prob_rep2}
E_{\kappa}^{\gamma}(\alpha_1, \alpha_2, \alpha_3, \alpha_4)
= \sqrt{\frac{2}{\pi}} \sum_{n = 1 / Nh+ 1}^{N / h}  nh e^{-\kappa \gamma nh} \E  \left[ \left(F_{\alpha_3, \alpha_4}(nh, \beta_{\cdot}^{nh})\right)^{-\kappa} 1_{\{\min_{s > 0} \beta_s^{nh} \le h\}}\right]
+ C_N
\end{align}

\noindent for some constant $C_N > 0$ which depends on $N$ and the other parameters but not on $h$, with the property that $\lim_{N \to \infty} C_N = 0$.

Recall \eqref{eq:functional} for the definition of the random functional $F$. By Theorem \ref{theo:path_dec}, we can rewrite the sum in \eqref{eq:proof_prob_rep2} as
\begin{align*}
& \sum_{n = 1 / Nh+ 1}^{N / h}  nh e^{-\kappa \gamma nh}\int_0^{\frac{1}{n}}
\E  \left[ \left(
e^{-\gamma nh} \int_{|x| \ge 1} \frac{dM^\gamma(x)}{|x|^{4 - \gamma (\alpha_3 + \alpha_4)} |x-1|^{\gamma \alpha_3}}
\right.\right. \\
& \quad \left.\left.
+\int_{\R_{s \ge 0} \times \S^1_\theta}e^{-\gamma((nh + B_s) 1_{\{s \le T_{-nh(1-u)}\}} + (nhu + \beta_{s - T_{-nh(1-u)}}^0)1_{\{ s \ge T_{-nh(1-u)}\}} - \alpha_3G(1, e^{-s - i\theta}))}d\widehat{M}^\gamma(s,\theta)
\right)^{-\kappa}\right] du\\
& \overset{x = nh(1-u)}{=}
 \sum_{n = 1 / Nh+ 1}^{N / h}   e^{-\kappa \gamma nh}\int_{(n-1)h}^{nh}
\E  \left[ \left(
e^{-\gamma nh} \int_{|x| \ge 1} \frac{dM^\gamma(x)}{|x|^{4 - \gamma (\alpha_3 + \alpha_4)} |x-1|^{\gamma \alpha_3}}
\right.\right. \\
& \qquad \left.\left.
+\int_{\R_{s \ge 0} \times \S^1_\theta} e^{-\gamma((nh + B_s) 1_{\{s \le T_{-x}\}} + (nh - x + \beta_{s - T_{-x}}^0)1_{\{ s \ge T_{-x}\}} - \alpha_3G(1, e^{-s - i\theta}))}d\widehat{M}^\gamma(s,\theta)
\right)^{-\kappa}\right] dx\\
& = (1+o(1)) \int_{1 / N}^{N}  e^{-\kappa \gamma x}
\E  \left[ \left(F_{\alpha_3, \alpha_4}(x, \widetilde{\beta}_{\cdot}^x)\right)^{-\kappa}\right] dx
\end{align*}

\noindent where the $o(1)$ error is with respect to $h \to 0^+$ and comes from the fact that
\begin{align*}
e^{-\gamma nh} = (1+o(1))e^{-\gamma x},
\qquad e^{-\gamma (nh - x)} = (1+o(1))
\end{align*}

\noindent uniformly in $h > 0$ and $n \in \N$ for all $x \in [(n-1)h, nh]$. The desired formula \eqref{eq:prob_rep2_1} is recovered by sending $h \to 0^+$ and $N \to \infty$. The proof of  \eqref{eq:prob_rep2_3} is similar.
\end{proof}

The case where $\alpha_1 + \alpha_2 - Q \in (0, \kappa \gamma)$ is slightly more involved and the following elementary formula will be useful.
\begin{lemma} \label{lem:-moments}
Fix $\kappa, \gamma, \lambda > 0$ such that $\lambda < \kappa \gamma$. Let $X, Y$ be independent non-negative random variables and $T$ an independent $\mathrm{Exp}(\lambda)$ random variable. Provided that all the moments below exist, we have
\begin{align}\label{eq:-moments}
\E\left[\left(X+e^{-\gamma T} Y\right)^{-\kappa}\right]
= \frac{\lambda}{\gamma}B\left(\frac{\lambda}{\gamma}, \kappa - \frac{\lambda}{\gamma}\right)\E\left[X^{-(\kappa - \frac{\lambda}{\gamma})}\right]
\E\left[Y^{-\frac{\lambda}{\gamma}}\right].
\end{align}

\noindent where $B(x, y) = \frac{\Gamma(x) \Gamma(y)}{\Gamma(x+y)}$ is the beta function.
\end{lemma}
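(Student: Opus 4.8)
The plan is to collapse the whole expectation onto a single deterministic scaling integral and then factorise through the independence of $X$ and $Y$; the only genuine analytic ingredient is the classical beta integral, so the argument is short once the integrability bookkeeping is in place. Concretely, I would first use that $T$ is independent of $(X,Y)$ to condition on $(X,Y)$ and average over $T$ alone, the point being that the $\mathrm{Exp}(\lambda)$-law of $e^{-\gamma T}$ contributes exactly a homogeneous (Mellin) weight in the scaling variable.

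The cleanest route is to linearise the negative power with the Gamma representation $z^{-\kappa} = \Gamma(\kappa)^{-1}\int_0^\infty r^{\kappa - 1} e^{-rz}\,dr$, valid for $z > 0$. Applying it to $z = X + e^{-\gamma T}Y$ and using the independence of $X$, $Y$ and $T$ gives
\[
\E\left[(X + e^{-\gamma T} Y)^{-\kappa}\right] = \frac{1}{\Gamma(\kappa)} \int_0^\infty r^{\kappa - 1} \, \E\left[e^{-rX}\right] \, \E\left[e^{-r e^{-\gamma T} Y}\right] dr .
\]
The substitution $u = e^{-\gamma t}$ turns the $\mathrm{Exp}(\lambda)$-average into the weight $\frac{\lambda}{\gamma} u^{\frac{\lambda}{\gamma} - 1}$, so that $\E[e^{-r e^{-\gamma T} Y}]$ becomes $\frac{\lambda}{\gamma} \Gamma\left(\frac{\lambda}{\gamma}\right) r^{-\lambda/\gamma}\,\E[Y^{-\lambda/\gamma}]$; feeding this back and evaluating the remaining $r$-integral as a Gamma function produces the factor $\frac{\Gamma(\lambda/\gamma)\,\Gamma(\kappa - \lambda/\gamma)}{\Gamma(\kappa)} = B\left(\frac{\lambda}{\gamma}, \kappa - \frac{\lambda}{\gamma}\right)$ together with $\E[X^{-(\kappa - \lambda/\gamma)}]$ and $\E[Y^{-\lambda/\gamma}]$, which is the claimed identity. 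An entirely equivalent route avoids the Gamma trick: after averaging over $T$ one recognises the beta integral $\int_0^\infty \frac{w^{a - 1}}{(1 + w)^{\kappa}}\,dw = B(a, \kappa - a)$ (with $a = \lambda/\gamma$) via the scaling $w = uY/X$, which already exhibits the homogeneity $X^{\lambda/\gamma - \kappa} Y^{-\lambda/\gamma}$ before the outer expectation is taken.

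I expect the main obstacle to be one of rigour rather than of ideas: justifying the interchanges of expectation and integration and pinning down the range of the scaling integral. This is exactly where the hypotheses are used — non-negativity of $X$ and $Y$ lets me invoke Tonelli, the assumed finiteness of $\E[X^{-(\kappa - \lambda/\gamma)}]$ and $\E[Y^{-\lambda/\gamma}]$ guarantees the iterated integrals are finite, and the standing constraint $\lambda < \kappa\gamma$, i.e. $a = \lambda/\gamma < \kappa$, is precisely what makes the beta integral converge at $w = \infty$ (while $\lambda > 0$ secures convergence at $w = 0$). Once these integrability points are settled, the factorisation into the product of the two negative moments is immediate from the independence of $X$ and $Y$.
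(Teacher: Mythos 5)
Your overall strategy (the Gamma--Laplace linearisation of the negative power, or directly the scaling substitution) is the natural one and is essentially the change-of-variable argument the paper points to. However, the key evaluation fails. Since $T\sim\mathrm{Exp}(\lambda)$ is supported on $(0,\infty)$, the variable $u=e^{-\gamma T}$ has density $\tfrac{\lambda}{\gamma}u^{\lambda/\gamma-1}$ on $(0,1)$ only, so
\begin{align*}
\E\left[e^{-r e^{-\gamma T}Y}\right]
=\frac{\lambda}{\gamma}\,\E\left[\int_0^1 u^{\frac{\lambda}{\gamma}-1}e^{-ruY}\,du\right]
=\frac{\lambda}{\gamma}\,\E\left[(rY)^{-\frac{\lambda}{\gamma}}\int_0^{rY}s^{\frac{\lambda}{\gamma}-1}e^{-s}\,ds\right],
\end{align*}
a lower incomplete Gamma function, not $\tfrac{\lambda}{\gamma}\Gamma\left(\tfrac{\lambda}{\gamma}\right)r^{-\lambda/\gamma}\,\E[Y^{-\lambda/\gamma}]$ as you assert. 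The same truncation afflicts your second route: after $w=uY/X$ the integral runs over $(0,Y/X)$, not $(0,\infty)$, so you obtain an incomplete Beta integral. You correctly flagged ``pinning down the range of the scaling integral'' as the main obstacle, but the stated hypotheses do not remove it.

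This gap cannot be closed as the lemma is printed: taking $X=Y=1$ deterministic, $\kappa=1$ and $\lambda=\gamma/2$, the left-hand side of \eqref{eq:-moments} equals $\E[(1+e^{-\gamma T})^{-1}]<1$, while the right-hand side equals $\tfrac12 B(\tfrac12,\tfrac12)=\tfrac{\pi}{2}$. What the change of variables in \eqref{eq:lim_b} actually produces --- and what the argument needs --- is the version in which the expectation over $T$ is replaced by the integral $\int_{\R}\lambda e^{-\lambda t}\,\E\left[(X+e^{-\gamma t}Y)^{-\kappa}\right]dt$ against the (infinite) measure $\lambda e^{-\lambda t}\,dt$ on the whole real line: the contribution from $t<0$ converges precisely because $\lambda<\kappa\gamma$, the substitution $u=e^{-\gamma t}$ then sweeps all of $(0,\infty)$, and both of your computations close up and yield the stated right-hand side verbatim (with Tonelli and the finiteness of the two negative moments justifying the interchanges exactly as you describe). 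So the correct move is to prove that amended identity --- your argument then goes through word for word --- rather than the one stated.
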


The proof of Lemma \ref{lem:-moments} follows from the same change-of-variable argument in \eqref{eq:lim_b} and is skipped here. For a sanity check one may quickly verify that both the LHS and RHS of \eqref{eq:-moments} converge to $\E[X^{-\kappa}]$ as $\lambda / \gamma \to 0$.

\begin{proof}[Proof of \eqref{eq:prob_rep2_2}]
Our starting point is \eqref{eq:prob_rep1_2} from Lemma \ref{lem:prob_rep}. It is clear that
\begin{align*}
& \E \left[ \frac{1_{\{\min_{s > 0} \beta_{L, s}^{nh} \le h\}\cup \{\min_{s > 0} \beta_{R, s}^{\mathcal{T}} \le h\}}}{\left(F_{\alpha_3, \alpha_4}(nh, \beta_{L, \cdot}^{nh}) + F'_{\alpha_2, \alpha_1}(\mathcal{T}, \beta_{R, \cdot}^{\mathcal{T}})\right)^{\kappa}}
\right]
= \E \left[ \frac{1_{\{\min_{s > 0} \beta_{L, s}^{nh} \le h\}}}{\left(F_{\alpha_3, \alpha_4}(nh, \beta_{L, \cdot}^{nh}) + F'_{\alpha_2, \alpha_1}(\mathcal{T}, \beta_{R, \cdot}^{\mathcal{T}})\right)^{\kappa}}
\right]\\
& \qquad + \E \left[ \frac{1_{\{\min_{s > 0} \beta_{R, s}^{\mathcal{T}} \le h\}}}{\left(F_{\alpha_3, \alpha_4}(nh, \beta_{L, \cdot}^{nh}) + F'_{\alpha_2, \alpha_1}(\mathcal{T}, \beta_{R, \cdot}^{\mathcal{T}})\right)^{\kappa}}
\right]
- \E \left[ \frac{1_{\{\min_{s > 0} \beta_{L, s}^{nh} \le h\}\cap \{\min_{s > 0} \beta_{R, s}^{\mathcal{T}} \le h\}}}{\left(F_{\alpha_3, \alpha_4}(nh, \beta_{L, \cdot}^{nh}) + F'_{\alpha_2, \alpha_1}(\mathcal{T}, \beta_{R, \cdot}^{\mathcal{T}})\right)^{\kappa}}
\right]
\end{align*}

\noindent where the last term is $O(h^2)$ and may be safely ignored. Arguing as before, we see that
\begin{align}
\notag & \sqrt{\frac{2}{\pi}} \sum_{n=1}^{\infty} \frac{ nh e^{-(\kappa \gamma - (\alpha_1 + \alpha_2 - Q))nh}}{(\alpha_1 + \alpha_2 - Q)^2 } 
\E \left[ \frac{1_{\{\min_{s > 0} \beta_{L, s}^{nh} \le h\}}}{\left(F_{\alpha_3, \alpha_4}(nh, \beta_{L, \cdot}^{nh}) + F'_{\alpha_2, \alpha_1}(\mathcal{T}, \beta_{R, \cdot}^{\mathcal{T}})\right)^{\kappa}}
\right]\\
\label{eq:proof_rep2_2a}
& = \frac{\sqrt{2/\pi}}{(\alpha_1 + \alpha_2 - Q)^2 (\kappa \gamma - (\alpha_1 + \alpha_2 - Q))}
\E \left[ \left(F_{\alpha_3, \alpha_4}(\tau, \widetilde{\beta}_{L, \cdot}^{\tau}) + F'_{\alpha_2, \alpha_1}(\mathcal{T}, \beta_{R, \cdot}^{\mathcal{T}})\right)^{-\kappa}
\right] + o(1)
\end{align}

\noindent where $\tau \sim \mathrm{Exp}(\kappa \gamma - (\alpha_1 + \alpha_2 - Q))$ and $\mathcal{T} \sim \mathrm{Gamma}(2, \alpha_1 + \alpha_2 - Q)$. Recall that if $U$ is an independent $\mathrm{Uniform}[0,1]$ random variable, then $(\mathcal{T}_1, \mathcal{T}_2) : = (\mathcal{T}U, \mathcal{T}(1-U))$ is a pair of independent $\mathrm{Exp}(\alpha_1 + \alpha_2 - Q)$ random variables. Combining this fact with Theorem \ref{theo:path_dec}, we obtain
\begin{align*}
F'_{\alpha_2, \alpha_1}(\mathcal{T}, \beta_{R, \cdot}^{\mathcal{T}})
\overset{d}{=}e^{-\gamma \mathcal{T}_1} F'_{\alpha_2, \alpha_1}(\mathcal{T}_2, \widetilde{\beta}_{R, \cdot}^{\mathcal{T}_2})
\end{align*}

\noindent and we can rewrite the expectation in \eqref{eq:proof_rep2_2a} as
\begin{align*}
&  \E \left[ \left(F_{\alpha_3, \alpha_4}(\tau, \widetilde{\beta}_{L, \cdot}^{\tau}) + e^{-\gamma \mathcal{T}_1} F'_{\alpha_2, \alpha_1}(\mathcal{T}_2, \widetilde{\beta}_{R, \cdot}^{\mathcal{T}_2})\right)^{-\kappa}\right].
\end{align*}

\noindent Similarly, if we let $\tau_1, \tau_2$ be independent $\mathrm{Exp}(\kappa \gamma - (\alpha_1 + \alpha_2 - Q))$, then
\begin{align}
\notag &  \sqrt{\frac{2}{\pi}}\sum_{n=1}^{\infty} \frac{ nh e^{-(\kappa \gamma - (\alpha_1 + \alpha_2 - Q))nh} }{(\alpha_1 + \alpha_2 - Q)^2} 
\E \left[ \frac{1_{\{\min_{s > 0} \beta_{R, s}^{\mathcal{T}} \le h\}}}{\left(F_{\alpha_3, \alpha_4}(nh, \beta_{L, \cdot}^{nh}) + F'_{\alpha_2, \alpha_1}(\mathcal{T}, \beta_{R, \cdot}^{\mathcal{T}})\right)^{\kappa}}
\right]\\
\label{eq:proof_rep2_2b}
& = \frac{\sqrt{2/\pi}}{(\alpha_1 + \alpha_2 - Q)(\kappa \gamma - (\alpha_1 + \alpha_2 - Q))^2}\E \left[ \left(e^{-\gamma\tau_1}F_{\alpha_3, \alpha_4}(\tau_2, \widetilde{\beta}_{L, \cdot}^{\tau_2}) + F'_{\alpha_2, \alpha_1}(\mathcal{T}_2, \widetilde{\beta}_{R, \cdot}^{\mathcal{T}_2})\right)^{-\kappa}
\right] + o(1).
\end{align}

\noindent The claim then follows by sending $h \to 0^+$ and applying Lemma \ref{lem:-moments} to \eqref{eq:proof_rep2_2a} and \eqref{eq:proof_rep2_2b}. 
\end{proof}

\section{Fusion in boundary Liouville Conformal Field Theory}
\subsection{Boundary Liouville Conformal Field Theory}
Boundary LCFT is LCFT on proper simply connected domains $D\subset\C$. We start by a brief review of the theory and refer to \cite{huang} for details. Like LCFT on the sphere, the theory is conformally invariant, so by the Riemann uniformisation theorem, it is enough to study it on the upper-half plane $\H:=\{\Im z>0\}$ (the unit disc $\D$ is also a common choice) equipped with some background metric $g$. In this context, the Liouville action with boundary term is given by\footnote{As in the sphere case, we omit the Ricci and geodesic curvature terms.}
\begin{equation}
\label{eq:boundary_liouville_action}
S_\mathrm{L}(X,g):=\frac{1}{4\pi}\int_\H\left(|\nabla X|^2+4\pi\mu e^{\gamma X}g(z)\right)d^2z+\mu_\partial\int_\R e^{\frac{\gamma}{2}X}g(x)^{1/2}dx
\end{equation}
where $\mu_\partial>0$ is the boundary cosmological constant. One recognises the Dirichlet energy in the first term of the action, giving rise to a GFF which we take to have Neumann boundary conditions. The GFF is weighted by its bulk GMC mass $M^\gamma(\H)$ and its boundary GMC mass $M^\gamma_\partial(\R)$, where the boundary GMC is formally
\[dM^\gamma_\partial(x)=e^{\frac{\gamma}{2}X(x)-\frac{\gamma^2}{8}\E[X(x)^2]}g(x)^{1/2}dx\]
and is obtained via a regularisation of the field using semi-circle averages.

 As in the sphere case, the observables are the vertex operators $V_\alpha(z)$ for insertions $z\in\H$. The main difference is that one can consider insertions on the boundary, which we formally write 
 \[B_\beta(x):=e^{\frac{\beta}{2}X(x)}\]
 for $x\in\R$ and $\beta$ in a range to be determined. The correlation functions $\langle\prod_{i=1}^NV_{\alpha_i}(z_i)\prod_{j=1}^MB_{\beta_j}(x_j)\rangle$ exist iff the Seiberg bounds are satisfied, which in this context are given by
\begin{equation}
\begin{aligned} 
 &\sigma:=\sum_{i=1}^N\frac{\alpha_i}{Q}+\sum_{j=1}^M\frac{\beta_j}{2Q}-1>0\\
 &\forall i,\;\alpha_i<Q\\
 &\forall j,\;\beta_j<Q
 \end{aligned}
 \end{equation}
 If these are satisfied, the correlation function has the following form\footnote{We chose the prefactor 2 so that the asymptotic behaviour of the bulk 1-point function with $\mu=0$ coincides with that of \cite{fateev} equation (2.24).} \cite{huang}:
\begin{equation}
\label{eq:correl_disc}
\left\langle\prod_{i=1}^NV_{\alpha_i}(z_i)\prod_{j=1}^MB_{\beta_j}(x_j)\right\rangle=2e^{C(\z,\x)}\int_\R e^{Q\sigma c}\E\left[\exp\left(-\mu e^{\gamma c}\int_\H e^{\gamma H}dM^\gamma-\mu_\partial e^{\frac{\gamma}{2}c}\int_\R e^{\frac{\gamma}{2}H}dM^\gamma_\partial\right)\right]dc
\end{equation}
where $H$ and $C(\z,\x)$ are the functions defined by
\begin{equation}
\begin{aligned}
H=&\sum_{i=1}^N\alpha_iG(z_i,\cdot)+\sum_{j=1}^M\frac{\beta_j}{2}G(x_j,\cdot)\\
C(\z,\x)=&\sum_{i<i'}\alpha_i\alpha_{i'}G(z_i,z_i')+\sum_{i,j}\frac{\alpha_i\beta_j}{2}G(z_i,x_j)+\sum_{j<j'}\frac{\beta_j\beta_{j'}}{4}G(x_j,x_j')
\end{aligned}
\end{equation}
with $G$ being Green's function with Neumann boundary conditions on $(\H,g)$. Notice that the usual change of variable $u=e^{\gamma c}$ does not give a nicer expression in this case since the exponential term in the expectation is quadratic in $e^{\frac{\gamma}{2}c}$.

Correlation functions are conformally covariant, and if $\psi:\H\to\H$ is a M\"obius transformation, then (recall that $\Delta_{\alpha}=\frac{\alpha}{2}(Q-\frac{\alpha}{2})$)
\[\left\langle\prod_{i=1}^NV_{\alpha_i}(\psi(z_i))\prod_{j=1}^MB_{\beta_j}(\psi(x_j))\right\rangle=\prod_{i=1}^N|\psi'(z_i)|^{-2\Delta_{\alpha_i}}\prod_{j=1}^M|\psi'(x_j)|^{-\Delta_{\beta_j}}\left\langle\prod_{i=1}^NV_{\alpha_i}(z_i)\prod_{j=1}^MB_{\beta_j}(x_j)\right\rangle\]

M\"obius transforms of $\H$ have three real parameters, so when the location of the insertions have less than (or exactly) three real parameters, the correlation functions are determined by conformal invariance, and we have the following structure constants
\begin{enumerate}
\item Bulk-boundary two-point function
\begin{equation}
\langle V_\alpha(z)B_\beta(x)\rangle=\frac{R(\alpha,\beta)}{|z-\bar{z}|^{2\Delta_\alpha-\Delta_\beta}|z-x|^{2\Delta_\beta}}
\end{equation}
As a special case of this equation for $\beta=0$, we have the bulk one-point function
\begin{equation}
\label{eq:bulk_one_point}
\langle V_\alpha(z)\rangle=\frac{U(\alpha)}{|z-\bar{z}|^{2\Delta_\alpha}}
\end{equation}
\item Boundary three-point function
\begin{equation}
\label{eq:boundary_three_point}
\langle B_{\beta_1}(x_1)B_{\beta_2}(x_2)B_{\beta_3}(x_3)\rangle=\frac{c(\beta_1,\beta_2,\beta_3)}{|x_1-x_2|^{\Delta_{\beta_1}+\Delta_{\beta_2}-\Delta_{\beta_3}}|x_2-x_3|^{\Delta_{\beta_2}+\Delta_{\beta_3}-\Delta_{\beta_1}}|x_3-x_1|^{\Delta_{\beta_3}+\Delta_{\beta_1}-\Delta_{\beta_2}}}
\end{equation}
\end{enumerate}
\begin{remark}
There is also a definition for a boundary two-point function, which we omit here since we will not be needing it for the purpose of this paper. Let us just mention that this object is to the reflection coefficient of \cite{KRV2} what the boundary three-point function is to the DOZZ formula.%so we expect that it should play an important role in the proof of \eqref{eq:boundary_three_point}.
\end{remark}
The above structure constants are to be understood as meromorphic functions of the parameters and they arise naturally in the bootstrap formalism. Physicists have conjectured exact formulae for the values of these structure constants \cite{fateev,ponsot}, and there are works in progress by Gwynne and Remy establishing the validity of \eqref{eq:bulk_one_point} and Remy and Zhu addressing \eqref{eq:boundary_three_point}. %Indeed, it looks like all the concepts of \cite{KRV2} -- BPZ equation, reflection coefficient, tail expansion of GMC etc. -- have their counterpart in the disc case, except that the model is the strip instead of the cylinder.

\subsection{Main results}
The cases we treat are the fusion on two boundary-insertions, the absorption of a bulk-insertion on the boundary and the fusion of two bulk-insertions.
\begin{theorem}[Boundary four-point]
\label{thm:boundary_four}
Let $(\beta_1,\beta_2,\beta_3,\beta_4)$ satisfying the Seiberg bounds and suppose that $\beta_3+\beta_4>Q$. Then the following asymptotic holds:
\begin{enumerate}
\item \emph{Supercritical case}

If $\beta_1+\beta_2>Q$, then
\begin{equation}
\langle B_{\beta_1}(0)B_{\beta_2}(x)B_{\beta_3}(1)B_{\beta_4}(\infty)\rangle\underset{x\to0}{\sim}\frac{1}{4\sqrt{\pi}}\frac{|x|^{\frac{Q^2}{4}-\Delta_{\beta_1}-\Delta_{\beta_2}}}{\log^{3/2}\frac{1}{|x|}}\partial_3c(\beta_1,\beta_2,Q)\partial_1c(Q,\beta_3,\beta_4)
\end{equation}
\item \emph{Critical case}

If $\beta_1+\beta_2=Q$, then
\begin{equation}
\label{eq:fusion_boundary_critical}
\langle B_{\beta_1}(0)B_{\beta_2}(x)B_{\beta_3}(1)B_{\beta_4}(\infty)\rangle\underset{x\to0}{\sim}-\frac{1}{\sqrt{\pi}}\frac{|x|^{-\frac{1}{2}\beta_1\beta_2}}{\log^{1/2}\frac{1}{|x|}}\partial_1c(Q,\beta_3,\beta_4)
\end{equation}
\item \emph{Subcritical case}

If $\beta_1+\beta_2<Q$, then
\begin{equation}
\langle B_{\beta_1}(0)B_{\beta_2}(x)B_{\beta_3}(1)B_{\beta_4}(\infty)\rangle\underset{x\to0}{\sim}|x|^{-\frac{1}{2}\beta_1\beta_2}c(\beta_1+\beta_2,\beta_3,\beta_4)
\end{equation}
\end{enumerate}
\end{theorem}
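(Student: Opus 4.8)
The plan is to mirror the sphere argument of Section~\ref{subsec:proof}, transplanting each ingredient to the upper half-plane $\H$ with Neumann boundary conditions and tracking the bulk mass $\int_\H e^{\gamma H}dM^\gamma$ together with the boundary mass $\int_\R e^{\frac{\gamma}{2}H}dM^\gamma_\partial$ simultaneously. First I would fix the insertions at $(0,x,1,\infty)\subset\R$, set $t=\log\frac{1}{|x|}$, and conformally map $\H$ onto the strip $\R_s\times(0,\pi)_\theta$ (via $w=-\log z$) so that $\beta_3,\beta_4$ sit near $s=0,-\infty$ while $\beta_1,\beta_2$ sit near $s=+\infty,t$, and the fusion $x\to0$ becomes $t\to\infty$. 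Under this map the Neumann GFF admits a radial/angular decomposition $X=B_s+Y$ analogous to Lemma~\ref{lem:radial_angular}, where $(B_s)_s$ is a two-sided Brownian motion (the Neumann zero mode survives) and $Y$ is the lateral noise; the boundary insertions feed into the radial drift, whose sign changes precisely at $\beta_1+\beta_2=Q$---the threshold of the statement.

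Next I would run Cameron--Martin as in~\eqref{eq:rn} to strip the drift off $[0,t]$, producing the prefactor $|x|^{\frac{Q^2}{4}-\Delta_{\beta_1}-\Delta_{\beta_2}}$ in the supercritical case. It is essential to keep the $c$-integral in the Laplace-transform form~\eqref{eq:correl_disc}: since the bulk and boundary masses scale as $e^{\gamma c}$ and $e^{\frac\gamma2 c}$, the substitution $u=e^{\gamma c}$ no longer collapses the correlation into a single negative moment, so unlike the sphere I cannot appeal to Theorem~\ref{theo:result_gmc} and must carry the two-term exponential throughout. I would then split both masses into left/central/right parts, condition on $\tilde{A}_{b,t}=\{\sup_{0\le s\le t}\tilde{B}_s\le b\}$, and invoke the $\mathrm{BES}(3)$ absolute-continuity argument to show that the central part vanishes and the left/right functionals decouple into independent pieces $L_\infty$ (carrying $\beta_3,\beta_4$) and $R_\infty$ (carrying $\beta_1,\beta_2$). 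The Brownian-bridge decomposition $B_s=Br_s+\frac{\delta}{\sqrt t}s$ manufactures the extra zero mode which, exactly as in~\eqref{eq:lim_b}, supplies the additional $(\log\frac1{|x|})^{-1/2}$ and hence the power $-3/2$; the critical case $\beta_1+\beta_2=Q$ has a trivial change of measure and only one surviving surface, so one recovers $-1/2$ as in Section~\ref{subsec:proof_critical}, while the subcritical case $\beta_1+\beta_2<Q$ follows directly from dominated convergence since the merged charge $\beta_1+\beta_2$ still satisfies the boundary Seiberg bounds.

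The crux, and the step I expect to be the main obstacle, is to identify each factorised integral with a derivative of the boundary three-point structure constant, i.e. to establish the boundary analogue of~\eqref{eq:lim_is_dozz},
\begin{equation*}
\lim_{b\to\infty}b\int_\R e^{(\beta_3+\beta_4-Q)v}\,\E_b\!\left[\exp\!\left(-\mu e^{\gamma v}L_{\mathrm{bulk}}-\mu_\partial e^{\frac\gamma2 v}L_\partial\right)\right]dv=(\mathrm{const})\cdot\partial_1 c(Q,\beta_3,\beta_4),
\end{equation*}
and symmetrically $\partial_3 c(\beta_1,\beta_2,Q)$ for the right factor. This amounts to recognising the conditioned, drift-saturated boundary GMC integral as the boundary three-point function at the point where one momentum saturates the Seiberg bound at $Q$, together with the $\beta\to Q$ derivative obtained by realising the $\mathrm{BES}(3)$ radial part as a negatively drifted Brownian motion conditioned to stay below $b$ with drift tending to $0$. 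Whereas the sphere case imported this identification from \cite{Bav,DKRV2} through the DOZZ formula, here it hinges on the rigorous construction of the boundary three-point function (Remy--Zhu); granting that input, the argument is structurally identical. Finally I would collect the numerical factors: the half-line boundary integral and the Neumann doubling of the boundary Green's function are responsible for the normalisation of the radial Brownian motion, hence for the replacement $\sqrt{2\pi}\mapsto\sqrt\pi$ and the prefactors $\frac{1}{4\sqrt\pi}$ and $-\frac{1}{\sqrt\pi}$ in the two non-trivial regimes.
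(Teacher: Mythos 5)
Your proposal follows essentially the same route as the paper: conformal map to a strip, radial/angular decomposition with the boundary-doubled Green's function, Cameron--Martin removal of the drift on $[0,t]$, the Brownian-bridge-plus-random-drift decomposition producing the extra zero mode and the $\log^{-3/2}$ rate, and identification of the two factorised limits with $\partial_3 c(\beta_1,\beta_2,Q)$ and $\partial_1 c(Q,\beta_3,\beta_4)$ via truncated correlation functions and the $\mathrm{BES}(3)$/vanishing-drift correspondence, with the subcritical case by dominated convergence. The only inessential difference is that the exact formula for $c$ (Remy--Zhu) is not actually needed -- the theorem only uses the probabilistic definition of the boundary three-point function and its $\beta\to Q$ derivative, exactly as the sphere case only needed the probabilistic three-point function before invoking DOZZ.
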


The next theorem is about the fusion in the bulk two-point function. %For this statement and this one only, it is more convenient to compute the correlation function in the disk, which we write $\langle V_{\alpha_1}(0)V_{\alpha_2}(z)\rangle_\D$.
\begin{theorem}[Bulk two-point: Fusion]
\label{thm:bulk_two_fusion}
Let $(\alpha_1,\alpha_2,\beta)$ satisfying the Seiberg bounds. Then the following asymptotics hold:
\begin{enumerate}
\item If $\beta=0$, then
\begin{equation}
\langle V_{\alpha_1}(i)V_{\alpha_2}(i+z)\rangle\underset{z\to 0}{\sim}-\frac{2^{-\alpha_1\alpha_2}}{\sqrt{2\pi}}\frac{|z|^{2(\frac{Q^2}{4}-\Delta_{\alpha_1}-\Delta_{\alpha_2})}}{\log^{1/2}\frac{1}{|z|}}\partial_3C_\gamma(\alpha_1,\alpha_2,Q)
\end{equation}
\item If $\beta>0$, then
\begin{enumerate}
\item\emph{Supercritical case}

If $\alpha_1+\alpha_2>Q$, then
\begin{equation}
\langle V_{\alpha_1}(i)V_{\alpha_2}(i+z)B_\beta(0)\rangle\underset{z\to0}{\sim}\frac{2^{\Delta_\beta-\frac{Q^2}{2}-\alpha_1\alpha_2}}{4\sqrt{2\pi}}\frac{|z|^{2(\frac{Q^2}{4}-\Delta_{\alpha_1}-\Delta_{\alpha_2})}}{\log^{3/2}\frac{1}{|z|}}\partial_3C_\gamma(\alpha_1,\alpha_2,Q)\partial_1R(Q,\beta)
\end{equation}
\item \emph{Critical case}

If $\alpha_1+\alpha_2=Q$, then
\begin{equation}
\langle V_{\alpha_1}(i)V_{\alpha_2}(i+z)B_\beta(0)\rangle\underset{z\to0}{\sim}-\frac{2^{\Delta_\beta-\frac{Q^2}{2}-\alpha_1\alpha_2}}{\sqrt{2\pi}}\frac{|z|^{-\alpha_1\alpha_2}}{\log^{1/2}\frac{1}{|z|}}\partial_1R(Q,\beta)
\end{equation}
\item\emph{Subcritical case}

If $\alpha_1+\alpha_2<Q$, then
\begin{equation}
\langle V_{\alpha_1}(i)V_{\alpha_2}(i+z)B_\beta(0)\rangle\underset{z\to0}{\sim}2^{\Delta_\beta-\frac{Q^2}{2}-\alpha_1\alpha_2}|z|^{-\alpha_1\alpha_2}R(\alpha_1+\alpha_2,\beta)
\end{equation}
\end{enumerate}
\end{enumerate}
\end{theorem}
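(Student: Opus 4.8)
The plan is to reduce $\langle V_{\alpha_1}(i)V_{\alpha_2}(i+z)B_\beta(0)\rangle$ to a negative moment of total GMC mass through \eqref{eq:correl_disc}, and then run the radial/angular machinery of Sections \ref{subsec:main_idea}--\ref{subsec:proof_critical} localised at the bulk fusion point. The structural observation driving everything is that both $V_{\alpha_1}(i)$ and $V_{\alpha_2}(i+z)$ lie in the interior of $\H$, at distance of order $1$ from $\R$, whereas the fusion happens at scale $|z|\to0$. Hence near the fusion point the Neumann field is indistinguishable from a whole-plane GFF up to an $O(|z|)$ correction to Green's function, and the local model is the full cylinder $\cyl=\R_s\times\S^1_\theta$ with the complete angular circle, exactly as on the sphere. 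This is precisely what forces the sphere constant $C_\gamma$ (rather than the boundary constant $c$) and the full-circle normalisation $\sqrt{2\pi}$ to appear, in contrast with Theorem \ref{thm:boundary_four}.

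Concretely, I would conformally send a neighbourhood of $i$ to one end of $\cyl$, mapping $i+z$ to radial time $t=\log\frac1{|z|}$, and write the radial part as a Brownian motion carrying the drift $\alpha_1+\alpha_2-Q$ on $[0,t]$. The Cameron--Martin step \eqref{eq:rn} (trivial when $\alpha_1+\alpha_2=Q$) and the decomposition $Z_t=L_t+C_t+R_t$ then go through with only cosmetic changes, and the energy/entropy balance of Section \ref{subsec:main_idea} applies verbatim. The ``bubble'' end, carrying $\alpha_1,\alpha_2$ together with a neck of charge $Q$, yields the cusp factor $\partial_3C_\gamma(\alpha_1,\alpha_2,Q)$ by the saturation argument \eqref{eq:lim_is_dozz}. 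The complementary end is now the rest of $\H$ bearing a bulk insertion of charge $Q$ at the neck and, for $\beta>0$, the spectator boundary insertion $B_\beta(0)$; this is precisely a bulk--boundary configuration whose contribution should be the structure constant $R$.

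The genuinely new input is the identification of this complementary contribution. For $\beta>0$ I would establish a boundary analogue of \eqref{eq:lim_is_dozz}, namely that the relevant $b\to\infty$ limit of the conditioned Laplace functional reproduces $\partial_1R(Q,\beta)$ in the super/critical regimes (using $\lim_{\alpha\to Q}R(\alpha,\beta)/(\alpha-Q)=\partial_1R(Q,\beta)$ and the existence of the bulk--boundary two-point function), and $R(\alpha_1+\alpha_2,\beta)$ in the subcritical regime by dominated convergence. The boundary GMC term $\mu_\partial\int_\R e^{\frac\gamma2 H}\,dM^\gamma_\partial$ is a true spectator: since the fusion is interior it produces no divergence as $z\to0$ and is simply absorbed into $R(Q,\beta)$, which is what renders harmless the fact that $u=e^{\gamma c}$ no longer linearises \eqref{eq:correl_disc}. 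The three regimes of $\alpha_1+\alpha_2-Q$ and the log powers $-\tfrac32,-\tfrac12,0$ then mirror Theorem \ref{thm:result}, with $R(Q,\beta)$ playing the role of $\partial_1C_\gamma(Q,\alpha_3,\alpha_4)$. The case $\beta=0$ is degenerate in a manner analogous to the critical case of Theorem \ref{thm:result}: the complementary surface reduces to the bulk one-point function at charge $Q$, which saturates the single-insertion Seiberg bound, so no independent second surface splits off. This removes one cusp and one zero mode, leaving the single factor $\partial_3C_\gamma(\alpha_1,\alpha_2,Q)$ and the reduced power $\log^{-1/2}$.

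I expect the main obstacle to be the bookkeeping of the conformal prefactors $2^{\Delta_\beta-\frac{Q^2}2-\alpha_1\alpha_2}$ (and $2^{-\alpha_1\alpha_2}$ when $\beta=0$): these arise from evaluating $C(\z,\x)$, the background metric, and Green's function at the fixed points $i,-i,0$, equivalently from the Möbius factors $|\psi'|^{-2\Delta}$ relating the configuration $(i,i+z,0)$ to a normalised one, and tracking them faithfully through the cylinder change of coordinates is delicate. A secondary difficulty is making the boundary analogue of \eqref{eq:lim_is_dozz} rigorous, i.e. showing that adding a vanishing negative drift to the complementary radial functional reproduces exactly the structure constant $R$ and its derivative near the saturation $\alpha=Q$; this relies on the existence and regularity of $R(\alpha,\beta)$ at $\alpha=Q$, available from the works cited after \eqref{eq:boundary_three_point}.
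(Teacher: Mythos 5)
Your proposal is correct and follows essentially the same route as the paper: the paper likewise maps the domain to a (semi-infinite) cylinder with the fusing pair sent to radial time $t=\log\frac1{|z|}$, kills the drift $\alpha_1+\alpha_2-Q$ on $[0,t]$ by Cameron--Martin, conditions on $B_t=\sqrt{t}\delta$ and on the bridge staying below $b$ to extract the $\sqrt{2/\pi}\,t^{-3/2}$ factor, and identifies the two ends as $-\frac14\partial_3C_\gamma(\alpha_1,\alpha_2,Q)$ and $-\frac14\partial_\alpha\langle V_\alpha(i)B_\beta(0)\rangle|_{\alpha=Q}$ (your $\partial_1R(Q,\beta)$ up to the conformal prefactor). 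Your explanation of the $\beta=0$ degeneration and of the $\sqrt{2\pi}$-versus-$\sqrt{\pi}$ normalisation also matches the mechanism the paper leaves implicit under ``the others being similar.''
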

Another interesting limit of the bulk two-point function is sending one insertion to the boundary.
\begin{theorem}[Bulk two-point: Absorption]
\label{thm:bulk_two_absorption}
Let $(\alpha_1,\alpha_2)$ satisfying the Seiberg bounds, and suppose $\alpha_1>\frac{Q}{2}$. Then the following asymptotic holds:
\begin{enumerate}
\item\emph{Supercritical case}

 If $\alpha_2>\frac{Q}{2}$, then
\begin{equation}
\langle V_{\alpha_1}(i)V_{\alpha_2}(z)\rangle\underset{z\to0}{\sim}\frac{2^{2(\frac{Q^2}{4}-\Delta_{\alpha_1}-\Delta_{\alpha_2})}}{4\sqrt{\pi}}\frac{|z|^{(\alpha_2-\frac{Q}{2})^2}}{\log^{3/2}\frac{1}{|z|}}\partial_2R(\alpha_1,Q)\partial_2R(\alpha_2,Q)
\end{equation}
\item\emph{Critical case}

If $\alpha_2=\frac{Q}{2}$, then
\begin{equation}
\langle V_{\alpha_1}(i)V_{\alpha_2}(z)\rangle\underset{z\to0}{\sim}-\frac{2^{\frac{Q^2}{2}-2\Delta_{\alpha_1}}}{\sqrt{\pi}\log^{1/2}\frac{1}{|z|}}\partial_2R(\alpha_1,Q)
\end{equation}
\item\emph{Subcritical case}

If $\alpha_2<\frac{Q}{2}$, then
\begin{equation}
\langle V_{\alpha_1}(i)V_{\alpha_2}(z)\rangle\underset{z\to0}{\sim}\frac{R(\alpha_1,2\alpha_2)}{2^{2\Delta_{\alpha_1}-\Delta_{2\alpha_2}}}
\end{equation}
\end{enumerate}
\end{theorem}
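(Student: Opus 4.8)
\emph{Setup.} The plan is to adapt the radial/angular analysis of Section~\ref{subsec:proof} to the half-plane, viewing the absorption of $V_{\alpha_2}(z)$ at the boundary point $0$ as the fusion of the bulk charge $\alpha_2$ with its mirror image across $\R$. Starting from the representation \eqref{eq:correl_disc} with the two bulk insertions at $i$ and $z$ and no boundary insertions, I would apply the conformal map $w\mapsto-\log w$, which sends $\H$ to the half-cylinder $\R_s\times(0,\pi)_\theta$ with the two boundary rays mapped to $\{\theta=0\}$ and $\{\theta=\pi\}$. Writing $t:=\log\frac{1}{|z|}$, the insertion $V_{\alpha_2}(z)$ then sits at radial coordinate $s=t\to\infty$ while $V_{\alpha_1}(i)$ and the background stay at bounded $s$. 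In the Neumann radial/angular decomposition (the analog of Lemma~\ref{lem:radial_angular}) the semicircle average is a Brownian motion whose quadratic variation grows like $2s$ rather than $s$ (the method of images doubles the contribution on the boundary), and the image structure of the Neumann Green's function makes the bulk charge $\alpha_2$ produce an effective radial drift $2\alpha_2-Q$. The three regimes are thus governed by the boundary Seiberg bound $2\alpha_2=Q$, i.e.\ $\alpha_2=Q/2$. The standing assumption $\alpha_1>Q/2$ is precisely the condition that $(\alpha_1,Q)$ satisfies the bulk-boundary Seiberg bound, guaranteeing that the piece retaining $V_{\alpha_1}(i)$ survives as a genuine surface in the limit (the analog of $\alpha_3+\alpha_4>Q$ in Theorem~\ref{thm:result}). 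Throughout one must carry the boundary GMC mass $M^\gamma_\partial$ from the $\mu_\partial$-term of \eqref{eq:boundary_liouville_action}, which, as noted after \eqref{eq:correl_disc}, does not collapse into a single negative moment.

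\emph{Subcritical case $\alpha_2<Q/2$.} Here $2\alpha_2<Q$, so the absorbed charge forms an admissible boundary insertion $B_{2\alpha_2}(0)$. As in the subcritical case of Theorem~\ref{thm:result}, a dominated-convergence argument shows that the GMC integrand converges and the limit is exactly the bulk-boundary two-point function $\langle V_{\alpha_1}(i)B_{2\alpha_2}(0)\rangle$; evaluating the prefactors $|i-\bar i|=2$ and $|i-0|=1$ in the bulk-boundary two-point formula recovers $R(\alpha_1,2\alpha_2)\,2^{-(2\Delta_{\alpha_1}-\Delta_{2\alpha_2})}$.

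\emph{Critical and supercritical cases $\alpha_2\geq Q/2$.} These follow the scheme of Sections~\ref{subsec:proof} and \ref{subsec:proof_critical} almost verbatim. I would use Cameron-Martin to remove the drift $2\alpha_2-Q$ on $[0,t]$; combined with the conformal prefactor $e^{C(\mathbf{z})}$ this produces the polynomial factor $|z|^{(\alpha_2-Q/2)^2}$, the exponent $\tfrac14(2\alpha_2-Q)^2$ being precisely the Gaussian cost of the drift against the factor-$2$ quadratic variation. Conditioning on $\{\sup_{0\leq s\leq t}B_s\leq b\}$ turns the radial process into (a pair of) $\mathrm{BES}(3)$-processes, and splitting the total mass into left/central/right parts kills the central part. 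In the supercritical regime the long neck forces the half-surface to split into two independent pieces, each carrying one insertion together with a boundary cusp at charge $Q$; sending $t\to\infty$ then $b\to\infty$ yields the product $\partial_2R(\alpha_1,Q)\,\partial_2R(\alpha_2,Q)$ together with the extra zero-mode factor $(\log\frac{1}{|z|})^{-1/2}$, giving the $(\log\frac{1}{|z|})^{-3/2}$ rate of the statement as in \eqref{eq:larger}. In the critical case $\alpha_2=Q/2$ the drift vanishes, only one cusp survives, and one obtains the single factor $\partial_2R(\alpha_1,Q)$ with a $(\log\frac{1}{|z|})^{-1/2}$ correction, exactly paralleling \eqref{eq:equal}.

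\emph{Main obstacle.} The step replacing the appeal to \eqref{eq:lim_is_dozz} is to identify each one-sided limit with the $Q$-saturated derivative of the bulk-boundary reflection coefficient, i.e.\ to establish the boundary analog $\lim_{b\to\infty}b\int_\R e^{(2\alpha-Q)v}\E_b[\exp(-\cdots)]\,dv = c\,\partial_2R(\alpha,Q)$ for an explicit constant $c$. This requires a GMC representation of $R(\alpha,\beta)$ together with the statement that the one-sided functional integral, with a Brownian motion conditioned to stay below $b$, computes $R(\alpha,\beta)$ and that its $\beta\to Q$ saturation extracts $\partial_2R(\alpha,Q)$ --- the bulk-boundary counterpart of the result of \cite{Bav}. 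Tracking the boundary GMC term $M^\gamma_\partial$ consistently through the left/right factorisation is the remaining delicate point; once these two ingredients are in hand, assembling the numerical constants $\tfrac{1}{4\sqrt\pi}$, $-\tfrac{1}{\sqrt\pi}$ and the powers of $2$ is routine bookkeeping.
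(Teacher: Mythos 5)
Your proposal is correct and follows essentially the same route as the paper, which in fact does not write out this case but declares it ``similar'' to Theorems \ref{thm:boundary_four} and \ref{thm:bulk_two_fusion}: map to the strip, extract the effective boundary charge $2\alpha_2$ via the Neumann image term (hence the threshold $\alpha_2=Q/2$ and the exponent $\tfrac14(2\alpha_2-Q)^2$ against the variance-$2s$ radial part), apply Cameron--Martin and the $b$-conditioning, and identify each one-sided limit with a $Q$-saturated derivative structure constant. The ``main obstacle'' you flag --- the boundary analogue of \eqref{eq:lim_is_dozz} identifying $\lim_{b\to\infty} b\,\E_b[\cdots]$ with $\partial_2R(\alpha,Q)$ --- is handled in the paper by exactly the hitting-probability comparison it uses to produce $\partial_1c(Q,\beta_2,\beta_3)$ and $\partial_1R(Q,\beta)$ in the proofs of Theorems \ref{thm:boundary_four} and \ref{thm:bulk_two_fusion}, so your outline is complete at the paper's own level of detail.
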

We now turn to the bulk-boundary three-point function $\langle V_\alpha(z)B_{\beta_1}(0)B_{\beta_2}(\infty)\rangle$. There is not much to say about the merging of the bulk insertion with a boundary insertion since for all $r>0$ and $\theta\in(0,\pi)$, the correlation function $\langle V_\alpha(re^{i\theta})B_{\beta_1}(0)B_{\beta_2}(\infty)\rangle$ is deduced from $\langle V_\alpha(e^{i\theta})B_{\beta_1}(0)B_{\beta_2}(\infty)\rangle$ by scaling. The non-trivial parameter we can vary is $\theta$, and the limit $\theta\to0$ corresponds to the absorption of an bulk insertion on a boundary point which is not an insertion. Thus we will study the correlation function $\langle V_\alpha(z)B_{\beta_1}(1)B_{\beta_2}(\infty)\rangle$ in the limit $z\to0$. Notice that by M\"obius invariance, this is the same as studying the function $\langle V_\alpha(i)B_{\beta_1}(0)B_{\beta_2}(x)\rangle$ in the limit $x\to0$, i.e. merging the two boundary insertions.

\begin{theorem}[Bulk-boundary three-point]
\label{thm:bulk_boundary}
Let $(\alpha,\beta_1,\beta_2)$ satisfying the Seiberg bounds, and assume that $\beta_1+\beta_2>\frac{Q}{2}$. Then the following asymptotic holds
\begin{enumerate}
\item\emph{Supercritical case}

If $\alpha>\frac{Q}{2}$, then
\begin{equation}
\langle V_\alpha(z)B_{\beta_1}(1)B_{\beta_2}(\infty)\rangle\underset{z\to0}{\sim}\frac{2^{\frac{Q^2}{4}-2\Delta_\alpha}}{4\sqrt{\pi}}\frac{|z|^{(\alpha-\frac{Q}{2})^2}}{\log^{3/2}\frac{1}{|z|}}\partial_2R(\alpha,Q)\partial_1c(Q,\beta_1,\beta_2)
\end{equation}
\item\emph{Critical case}

If $\alpha=\frac{Q}{2}$, then
\begin{equation}
\langle V_\alpha(z)B_{\beta_1}(1)B_{\beta_2}(\infty)\rangle\underset{z\to0}{\sim}-\frac{1}{\sqrt{\pi}\log^{1/2}\frac{1}{|z|}}\partial_1c(Q,\beta_1,\beta_2)
\end{equation}
\item\emph{Subcritical case}

If $\alpha<\frac{Q}{2}$, then
\begin{equation}
\langle V_\alpha(z)V_{\beta_1}(1)V_{\beta_2}(\infty)\rangle\underset{z\to0}{\sim}c(2\alpha,\beta_1,\beta_2)
\end{equation}
\end{enumerate}
\end{theorem}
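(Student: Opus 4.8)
The plan is to transport the radial/angular machinery of Section~\ref{subsec:proof} to the boundary geometry, where the $z$-dependence turns out to be governed by exactly the same bulk-insertion absorption already analysed in Theorem~\ref{thm:bulk_two_absorption}; only the spectator configuration changes, the single bulk insertion there being replaced by the boundary pair $B_{\beta_1},B_{\beta_2}$. First I would start from the GMC representation \eqref{eq:correl_disc} of $\langle V_\alpha(z)B_{\beta_1}(1)B_{\beta_2}(\infty)\rangle$, retaining the zero-mode integral as a double Laplace transform in $c$, since the substitution $u=e^{\gamma c}$ is unavailable here because of the boundary GMC term. Mapping $\H$ to the half-strip $\R_s\times(0,\pi)$ via $w\mapsto-\log w$ places the bulk insertion $V_\alpha(z)$ at radial time $t=\log\frac{1}{|z|}\to\infty$, the boundary insertion $B_{\beta_1}(1)$ at $s=0$ and $B_{\beta_2}(\infty)$ at $s=-\infty$; the Neumann analogue of Lemma~\ref{lem:radial_angular} then writes $X=B_s+Y$ and turns both the bulk mass $\int_\H e^{\gamma H}dM^\gamma$ and the boundary mass $\int_\R e^{\frac{\gamma}{2}H}dM_\partial^\gamma$ into exponential functionals of the radial Brownian motion $B_s$. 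The key structural fact is that, because $V_\alpha(z)$ is a bulk insertion limiting onto the boundary, the method of images doubles its effective charge to $2\alpha$, so the radial drift on $[0,t]$ equals $2\alpha-Q$ and the phase transition sits precisely at $\alpha=\tfrac{Q}{2}$.

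In the supercritical case $\alpha>\tfrac{Q}{2}$ I would apply Cameron--Martin as in \eqref{eq:rn} to remove the positive drift $2\alpha-Q$ on $[0,t]$; tracking the $e^{C(\z,\x)}$ prefactor together with the boundary normalisation (the width-$\pi$ strip and the factor $\tfrac12$ in the boundary charges) produces the polynomial rate $|z|^{(\alpha-\frac{Q}{2})^2}$ and the explicit power of $2$. Splitting the shifted mass into left, central and right parts and conditioning on $\{\sup_{s\le t}B_s\le b\}$, the central part vanishes in probability while the two ends decouple via the boundary analogue of the estimate \eqref{eq:green_estimate}, now applied to \emph{both} the bulk and the boundary GMC. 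The right end carries $V_\alpha$ and the $Q$-cusp created by the pinch, and converges to the derivative bulk--boundary two-point $\partial_2R(\alpha,Q)$ --- this is precisely the absorption constant of Theorem~\ref{thm:bulk_two_absorption}; the left end carries the spectator pair $B_{\beta_1},B_{\beta_2}$ together with the $Q$-cusp and converges to $\partial_1 c(Q,\beta_1,\beta_2)$ by the boundary analogue of \eqref{eq:lim_is_dozz} used for Theorem~\ref{thm:boundary_four}. The hypothesis $\beta_1+\beta_2>\tfrac{Q}{2}$ guarantees this spectator surface is non-degenerate, so that $\partial_1 c(Q,\beta_1,\beta_2)$ (defined through meromorphic continuation of the boundary three-point) is the correct leading coefficient. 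Finally, the extra zero mode released by the splitting --- the terminal drift becoming Lebesgue-distributed exactly as in the computation \eqref{eq:lim_b} --- upgrades the rate to $\left(\log\frac{1}{|z|}\right)^{-3/2}$ and assembles the stated product.

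The remaining cases are lighter. When $\alpha=\tfrac{Q}{2}$ the Cameron--Martin change of measure is trivial, the $V_\alpha$-side no longer has enough mass to form a second surface, and only the spectator survives; repeating the critical analysis of Section~\ref{subsec:proof_critical} yields the single factor $\left(\log\frac{1}{|z|}\right)^{-1/2}\partial_1 c(Q,\beta_1,\beta_2)$. When $\alpha<\tfrac{Q}{2}$ the radial drift is negative throughout, no splitting occurs, and dominated convergence (as in the subcritical regime of Theorems~\ref{thm:result} and \ref{thm:boundary_four}) shows that $V_\alpha$ is absorbed into an effective boundary insertion of charge $2\alpha$, giving $c(2\alpha,\beta_1,\beta_2)$.

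The main obstacle is not the drift analysis, which mirrors Theorem~\ref{thm:bulk_two_absorption}, but the simultaneous control of the bulk and boundary GMC masses under the decomposition: one must show that both localise to the correct ends and decouple through Kahane's inequality, which requires verifying the decoupling estimate \eqref{eq:green_estimate} for the boundary measure as well. Because the clean reduction to a single negative moment is unavailable, the zero mode has to be handled directly as a double Laplace transform, and the uniform integrability needed to exchange the $b\to\infty$ and $t\to\infty$ limits (in the spirit of \cite{DKRV2}) must be re-established for this mixed bulk/boundary functional. Together with the careful tracking of the powers of $2$ through the conformal map and the insertion positions, this is where the genuinely new work lies.
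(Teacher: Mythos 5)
Your proposal follows essentially the same route as the paper: the paper omits an explicit proof of Theorem \ref{thm:bulk_boundary}, declaring it ``similar'' to the proofs of Theorems \ref{thm:boundary_four} and \ref{thm:bulk_two_fusion}, and your sketch (strip coordinates, image-charge doubling giving the $\alpha=\frac{Q}{2}$ transition, Cameron--Martin, left/centre/right splitting with the zero-mode rescaling producing the $\log^{-3/2}$ factor, and identification of the two limiting surfaces with $\partial_2R(\alpha,Q)$ and $\partial_1c(Q,\beta_1,\beta_2)$) is exactly the intended adaptation, consistent with the fusion rules recorded in the remark following the theorem. The only remaining work is bookkeeping you already flag, chiefly the variance normalisation of the radial part of the Neumann field on the strip, which is what converts the drift $2\alpha-Q$ into the stated exponent $(\alpha-\frac{Q}{2})^2$.
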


\begin{remark}
More generally, the fusion rules in the supercritical case are the following:
\begin{enumerate}
\item Fusion of boundary-boundary $(\beta_1,\beta_2)$-insertions produces a boundary three-point function $\partial_3c(\beta_1,\beta_2,Q)$.
\item Absorption of a bulk $\alpha$-insertion produces a bulk-boundary function $\partial_2R(\alpha,Q)$.
\item Fusion of bulk-bulk $(\alpha_1,\alpha_2)$-insertions produces a DOZZ formula $\partial_3C_\gamma(\alpha_1,\alpha_2,Q)$.
\end{enumerate}
This rule, as well as the rate functions of the above theorems, can be used to compute the asymptotic behaviour of all correlation functions upon fusion of insertions, and express the limit with a lower order correlation function.
\end{remark}

%\begin{remark}
As such, we haven't said anything about the fusion of bulk-boundary insertions. This is because it can be seen as a two-step procedure of first absorbing the bulk insertion into the boundary and then merging the boundary insertions. Hence the operation does not produce a structure constant. As an example, consider the correlation function $\langle V_\alpha(z)B_{\beta_1}(0)B_{\beta_2}(1)B_{\beta_3}(\infty)\rangle$ in the limit $z\to0$, for $(\alpha,\beta_1,\beta_2,\beta_3)$ satisfying the Seiberg bounds, and suppose that both $\beta_3+\beta_4>Q$ and $2\alpha+\beta_1>Q$, so that we are in the supercritical case. Then the asymptotic is given by
\begin{equation}
\langle V_\alpha(z)B_{\beta_1}(0)B_{\beta_2}(1)B_{\beta_3}(\infty)\rangle\underset{z\to0}{\sim}\frac{1}{4\sqrt{\pi}}\frac{|z|^{(\alpha-\frac{Q}{2})^2-\alpha\beta_1}}{\log^{3/2}\frac{1}{|z|}}\frac{\partial}{\partial\beta}\langle V_\alpha(i)B_{\beta_1}(0)B_\beta(\infty)\rangle_{|\beta=Q}\partial_1c(Q,\beta_2,\beta_3)
\end{equation}
%\end{remark}

\begin{remark}
Even though the correlation functions can no longer be expressed in terms of negative moments of GMC (unless $\mu \mu_{\partial} = 0$), it is still possible to give probabilistic representations of the renormalised constants in the aforementioned theorems by performing the same partitioning-of-probability-space procedure on
\begin{align*}
\E\left[ \exp\left(-\mu e^{\gamma c}\int_\H e^{\gamma H}dM^\gamma-\mu_\partial e^{\frac{\gamma}{2}c}\int_\R e^{\frac{\gamma}{2}H}dM^\gamma_\partial\right) \right]
\end{align*}

\noindent as we did in Section \ref{sec:proof_gmc}. We omit the details here.
\end{remark}

We now turn to proving Theorems \ref{thm:boundary_four}, \ref{thm:bulk_two_fusion}, \ref{thm:bulk_two_absorption} and \ref{thm:bulk_boundary}. We only deal with Theorems \ref{thm:boundary_four} and \ref{thm:bulk_two_fusion} since the other cases are similar. 

Subcritical cases follow from dominated convergence so we won't treat them. The rest of the proofs are very similar to that of Theorem \ref{thm:result} so we will be brief.

\begin{proof}[Proof of Theorem \ref{thm:boundary_four}]
The setting is the upper-half plane $\H$ equipped with the metric $g(z)=4|z|_+^{-4}$. We use the same procedure as for the sphere and apply the conformal change of coordinate $\psi:z\mapsto e^{-z/2}$ from the infinite strip $\mc{S}:=\R\times(0,2\pi)$ to $\H$. Then Green's function on the strip is given by the even part of Green's function on the cylinder, i.e. if $X$ is a GFF on $\R_s\times(0,2\pi)_\theta$, we have (recall \eqref{eq:decomposition_green})
\begin{equation}
\begin{aligned}
\E[X(s,\theta)X(s',\theta')]&=G(\frac{s}{2},\frac{\theta}{2},\frac{s'}{2},\frac{\theta'}{2})+G(\frac{s}{2},\frac{\theta}{2},\frac{s'}{2},-\frac{\theta'}{2})\\&=(|s|\wedge|s'|)1_{ss'\geq0}+H(\frac{s}{2},\frac{\theta}{2},\frac{s'}{2},\frac{\theta'}{2})+H(\frac{s}{2},\frac{\theta}{2},\frac{s'}{2},-\frac{\theta'}{2})\\
&=(|s|\wedge|s'|)1_{ss'\geq0}+G(0,0,\frac{s'-s}{2},\frac{\theta'-\theta}{2})+G(0,0,\frac{s'-s}{2},\frac{\theta'+\theta}{2})
\end{aligned}
\end{equation}
Hence the field decomposes into the independent sum $X=B+Y$ where $(B_s)_{s\in\R}$ is standard two-sided Brownian motion and $Y$ is a $\log$-correlated field whose covariance kernel is given by the sum of $G$ functions on the right-hand side of the previous equation. It is also clear from the definition that the law of $Y$ is translation invariant. The pullback measure of $g$ on the strip is $g_\psi(s,\theta)=e^{-|s|}$ so we can take the GMC measure of $Y$ with respect to Lebesgue measure on $\mc{S}$ and take the drifted process $B_s-\frac{Q}{2}|s|$ for the radial part of the GFF.

First we have to explain how to make sense of boundary (derivative) $Q$-insertions. A boundary insertion with momentum $\beta$ at $\infty$ (on the strip) amounts in adding a positive drift $\frac{\beta}{2}$ to the radial process (on the positive real line), so the total drift vanishes when $\beta=Q$. For $t>0$, define $\H_t:=\H\setminus(e^{-t/2}\D)$ (resp. $\R_t:=\R\setminus(-e^{-t/2},e^{-t/2})$) and $\langle B_Q(0)B_{\beta_2}(1)B_{\beta_3}(\infty)\rangle_t$ the correlation function where we integrate the bulk (resp. boundary) GMC measure of \eqref{eq:correl_disc} on $\H_t$ (resp. $\R_t$) instead of $\H$ (resp. $\R$). Viewed in the strip, this is the same as taking $\mc{S}_t:=(-\infty,t)\times(0,2\pi)$ and $(-\infty,t)\times\{0,2\pi\}$ as domains of integration for the bulk and boundary measures. 

For fixed $b>0$, we have
\begin{equation}
\begin{aligned}
&\P\left(\underset{0\leq s\leq t}{\sup}B_s\leq b\right)\underset{t\to\infty}{\sim}\sqrt{\frac{2}{\pi t}}b\\
&\P\left(\underset{t\geq0}{\sup}B_s+\frac{1}{2}(\beta-Q)s\leq b\right)\underset{\beta\to Q^-}{\sim}(Q-\beta)b
\end{aligned}
\end{equation}
 so by previous arguments we have
\[\underset{t\to\infty}{\lim}\sqrt{\frac{\pi t}{2}}\langle B_Q(0)B_{\beta_2}(1)B_{\beta_3}(\infty)\rangle_t=\underset{\beta\to Q^-}{\lim}\frac{1}{Q-\beta}\langle B_{\beta}(0)B_{\beta_2}(1)B_{\beta_3}(\infty)\rangle=-\partial_1c(Q,\beta_2,\beta_3)\]
The critical case \eqref{eq:fusion_boundary_critical} follows easily from this equality.

Now we turn to the supercritical case. We write $t:=2\log\frac{1}{|x|}$. The radial process has a positive drift $\frac{1}{2}(\beta_1+\beta_2-Q)$ in $(0,t)$, which we kill by Cameron-Martin's theorem (recall \eqref{eq:rn}, yielding the Radon-Nikodym derivative $e^{\frac{1}{2}(\beta_1+\beta_2-Q)B_t-\frac{1}{8}(\beta_1+\beta_2-Q)^2t}$. This accounts for the polynomial rate in $|x|$. 

Similarly as in Figure \ref{fig:field_decomposition}, we condition on value of the process at time $t$ and introduce $B_t=\sqrt{t}\delta$ with $\delta\sim\mc{N}(0,1)$ independent of everything. Thus the process in $[0,t]$ is the sum of a random drift $\frac{\delta}{\sqrt{t}}$ and an independent Brownian bridge in $[0,t]$ (see Figure \ref{fig:field_decomposition_disc}). Conditioning the Brownian bridge in $(0,t)$ to stay below $b$, we get a contribution of $\sqrt{\frac{2}{\pi}}t^{-3/2}=\frac{1}{2\sqrt{2\pi}\log^{3/2}\frac{1}{|x|}}$. Taking $t\to\infty$ then $b\to\infty$, the limiting integral on the left is a strip with a $\beta_4$-insertions at $-\infty$, a $\beta_3$-insertion at 0 and a (derivative) $Q$-insertion at $+\infty$ (see Figure \ref{fig:field_decomposition_disc}), hence the limit is $-\frac{1}{2}\partial_1c(Q,\beta_3,\beta_4)$ (recall the prefactor 2 in the definition of \eqref{eq:correl_disc}). Similarly the limiting integral on the left is $-\frac{1}{2}\partial_1c(\beta_1,\beta_2,Q)$, yielding the result.

\begin{figure}[hbtp]
\centering
\includegraphics[scale=0.6]{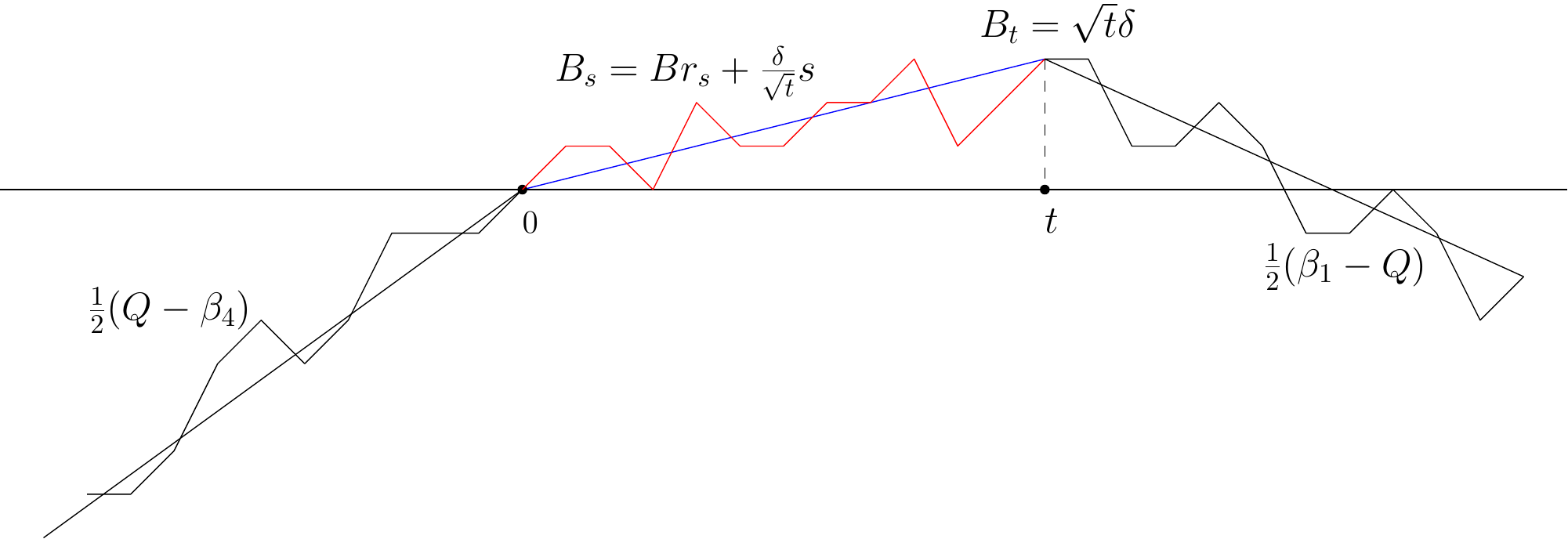}
\caption{\label{fig:field_decomposition_disc} The radial process on the strip in $[0,t]$ is the sum of a Brownian bridge (red) and a random independent drift (blue).}
\end{figure}

\end{proof}

\begin{proof}[Proof of Theorem \ref{thm:bulk_two_fusion}]
In this proof, we use the flat disc $(\D,dz)$ as set-up, which is mapped to the semi-infinite cylinder $\mc{C}_+=\R_+\times\S^1$ equipped with the metric $g(s,\theta)=e^{-2s}$ under the conformal transformation $z\mapsto e^{-z}$. So the GFF decomposes as the sum of a drifted Brownian motion $(B_s-Qs)_{s\geq0}$ and an independent lateral noise $Y$ from which we take the GMC measure with respect to Lebesgue measure.

We treat the case $\beta>0$ and $\alpha_1+\alpha_2>Q$, the others being similar. Let $t:=\log\frac{1}{|z|}$. With the presence of the insertions, the radial part has a positive drift $\alpha_1+\alpha_2-Q$ in $(0,t)$ and negative drift $\alpha_1-Q$ in $(t,\infty)$. Killing the drift in $(0,t)$ with Cameron-Martin's theorem gives the exponent in $|z|$. Conditioning on the value of $B_t=\sqrt{t}\delta$ and conditioning the Brownian bridge not to exceed some $b>0$ gives a prefactor of $\sqrt{\frac{2}{\pi}}t^{-3/2}$. Taking $t\to\infty$ then $b\to\infty$, we find that the integral on the right is an infinite cylinder with insertions $(\alpha_1,\alpha_2,Q)$ at $(+\infty,0,-\infty)$, so its value is $-\frac{1}{4}\partial_3C_\gamma(\alpha_1,\alpha_2,Q)$ (the lateral noise is close to the one used before in this region and can be dealt with using Kahane's convexity inequality). On the other hand, the integral on the left is a semi-infinite cylinder with a $Q$-insertion at $\infty$ and a $\beta$-insertion on the boundary, so its value is $-\frac{1}{4}\frac{\partial}{\partial\alpha}\langle V_\alpha(i)B_\beta(0)\rangle_{|\alpha=Q}$.
\end{proof}
\subsection{Links with random planar maps}
The above results can be interpreted with respect to the KPZ conjecture on random planar maps with the topology of the disc. For concreteness, let $\mc{T}_{n,m}$ be the set of triangulations of the disc with $n$ internal vertices and $m+2$ boundary vertices, with two marked vertices (one internal and one on the boundary). Then it is known \cite{angel} that there exists $\mu^c,\mu_\partial^c>0$ such that
\[\#\mc{T}_{n,m}\asymp e^{\mu^cn}e^{\mu^c_\partial m}m^{1/2}n^{-5/2}\]
We suppose that for a triangulation $(\t,\z,\x)$, we have conformal mapped $\t$ to $\H$ (in the manner of section \ref{subsec:rpm}) and that $\z$ is mapped to $i$ and $\x$ is mapped to $0$. For each such triangulation and $a>0$, we can construct measures $\nu^{\t,a}$ (resp. $\nu^{\t,a}_\partial$) giving mass $a^2$ (resp. $a$) to each triangle (resp. each boundary edge). Now we let $\mu:=(1+a^2)\mu^c$ and $\mu_\partial:=(1+a)\mu_\partial^c$, and sample the triangulations at random with the probability measure
\[\P_a(\t,\z,\x)=\frac{1}{Z_a}e^{-\mu|\t|}e^{-\mu_\partial\ell(\t)}\]
where $Z_a$ is the normalising constant and $\ell(\t)$ is the boundary length of $\t$. Additionally we choose the internal marked vertex uniformly in the internal vertices of $\t$ and similarly for the boundary marked vertex.

It is conjectured \cite{huang} that the pair of random measures $(\nu^{\t,a},\nu^{\t,a}_\partial)$ converges in distribution to a pair of random measures on $(\D,\partial\D)$, and the limit $(\nu,\nu_\partial)$ should be given by (some form of) LQFT on the disc. In particular, it should be the case that for all measurable sets $A\subset\H,B\subset\R$,
\begin{equation}
\begin{aligned}
&\E\left[\frac{\nu(A)}{\nu(\H)}\right]=\int_Af_{\sqrt{\frac{8}{3}},\mu^c,\mu_\partial^c}(z)d^2z\\&\E\left[\frac{\nu_\partial(B)}{\nu_\partial(\R)}\right]=\int_B\lambda_{\sqrt{\frac{8}{3}},\mu^c,\mu_\partial^c}(x)dx
\end{aligned}
\end{equation}
where we define for all $\gamma\in(0,2)$ and $\mu,\mu_\partial>0$,
\begin{equation}
\label{eq:normalise_boundary}
\begin{aligned}
&f_{\gamma,\mu,\mu_\partial}(z):=\frac{1}{Z}\langle V_\gamma(z)V_\gamma(i)B_\gamma(0)\rangle\\
&\lambda_{\gamma,\mu,\mu_\partial}(x):=\frac{1}{Z_\partial}\langle B_\gamma(x)V_\gamma(i)B_\gamma(0)\rangle
\end{aligned}
\end{equation}
where $Z,Z_\partial$ are normalising constants whose values are discussed in Appendix \ref{app:constant}. 

Similarly to the discussion of section \ref{subsec:rpm}, the result of Theorems \ref{thm:bulk_boundary} and \ref{thm:bulk_two_fusion} gives precise estimates on the expected density of vertices in different settings: internal or boundary vertices around the marked point on the boundary, internal vertices around the internal marked point, and internal vertices around the boundary.

Finally, we mention that one can formulate other conjectures involving different values of $\gamma$ (e.g. by weighting the measure $\P_a$ by some statistical mechanics model), $\mu$ and $\mu_\partial$ (e.g. by considering other types of maps).

\appendix
%%%%%%%%%%%%%%%%%%%%%%%%%%%%%%%%%%%%%%%%%%%%%%%%%%%%%%%%%%%%
\section{The DOZZ formula}
\label{app:dozz}
The DOZZ formula is the expression of the 3-point correlation function on the sphere $\langle V_{\alpha_1}(0)V_{\alpha_2}(1)V_{\alpha_3}(\infty)\rangle_{\S^2}$. The formula reads
\begin{equation}
\label{eq:dozz}
\begin{aligned}
C_\gamma(\alpha_1,\alpha_2,\alpha_3)=&\left(\pi\mu\left(\frac{\gamma}{2}\right)^{2-\frac{\gamma^2}{2}}\frac{\Gamma(\gamma^2/4)}{\Gamma(1-\gamma^2/4)}\right)^{-\frac{\bbar{\alpha}-2Q}{\gamma}}\\
&\times\frac{\Upsilon'_{\frac{\gamma}{2}}(0)\Upsilon_{\frac{\gamma}{2}}(\alpha_1)\Upsilon_{\frac{\gamma}{2}}(\alpha_2)\Upsilon_{\frac{\gamma}{2}}(\alpha_3)}{\Upsilon_{\frac{\gamma}{2}}\left(\frac{\bbar{\alpha}-2Q}{2}\right)\Upsilon_{\frac{\gamma}{2}}\left(\frac{\bbar{\alpha}}{2}-\alpha_1\right)\Upsilon_{\frac{\gamma}{2}}\left(\frac{\bbar{\alpha}}{2}-\alpha_2\right)\Upsilon_{\frac{\gamma}{2}}\left(\frac{\bbar{\alpha}}{2}-\alpha_3\right)}
\end{aligned}
\end{equation}
where $\bbar{\alpha}=\alpha_1+\alpha_2+\alpha_3$ and $\Upsilon_{\frac{\gamma}{2}}$ is Zamolodchikov's special function. It has the following integral representation for $\Re z\in(0,Q)$ 
\[\log\U(z)=\int_0^\infty\left(\left(\frac{Q}{2}-z\right)^2e^{-t}-\frac{\sinh^2\left(\left(\frac{Q}{2}-z\right)\frac{t}{2}\right)}{\sinh\left(\frac{\gamma t}{4}\right)\sinh\left(\frac{t}{\gamma}\right)}\right)\frac{dt}{t}\]
and it extends holomorphically to $\C$.

It satisfies the functional relation $\U(Q-z)=\U(z)$ and it has a simple zero at $0$ if $\gamma^2\in\R\setminus\Q$\footnote{This is not really a restriction since the theory is continuous in $\gamma$}, so it has a simple zero at $Q$ too and $\U'(Q)=-\U'(0)\neq0$.

Let us introduce the notation
\[\bbar{C}_\gamma(\alpha_1,\alpha_2,\alpha_3)=\frac{\U'(0)\U(\alpha_1)\U(\alpha_2)\U(\alpha_3)}{\U(\frac{\bar{\alpha}}{2}-Q)\U(\frac{\bar{\alpha}}{2}-\alpha_1)\U(\frac{\bar{\alpha}}{2}-\alpha_2)\U(\frac{\bar{\alpha}}{2}-\alpha_3)}\]

Now we assume $\alpha_1+\alpha_2=Q$ and $\alpha_3=Q-iP$ and show the limit \eqref{eq:dozz_limit}. Then $\bar{\alpha}=2Q-iP$ and
\[\bbar{C}_\gamma(\alpha_1,Q-\alpha_1,Q-iP)=\frac{\U'(0)\U(\alpha_1)^2\U(iP)}{\U(-\frac{iP}{2})\U(\alpha_1+\frac{iP}{2})\U(\alpha_1-\frac{iP}{2})\U(\frac{iP}{2})}\underset{P\to0}{\sim}\frac{4i}{P}\]
So the product of DOZZs appearing in the bootstrap equation \eqref{eq:bootstrap} becomes
\begin{equation}
\begin{aligned}
\bbar{C}_\gamma(\alpha_1,\alpha_2,&Q-iP)\bbar{C}_\gamma(Q+iP,\alpha_3,\alpha_4)\\
&\underset{P\to0}{\sim}\frac{4\U'(0)^2\U(\alpha_3)\U(\alpha_4)}{\U(\frac{\alpha_3+\alpha_4-Q+iP}{2})\U(\frac{\alpha_3+\alpha_4-Q-iP}{2})\U(\frac{\alpha_4+Q+iP-\alpha_3}{2})\U(\frac{\alpha_3+Q+iP-\alpha_4}{2})}\\
&\underset{P\to0}{\sim}-4\partial_1\bbar{C}_\gamma(Q,\alpha_3,\alpha_4)
\end{aligned}
\end{equation}
Notice that $2Q-(\alpha_1+\alpha_2+Q-iP)\underset{P\to0}{\to}0$, so the prefactor in the DOZZ formula with Liouville momenta $(\alpha_1,Q-\alpha_1,Q-iP)$ is simply 1 in this limit. Hence
\[\underset{P\to0}{\lim}C_\gamma(\alpha_1,Q-\alpha_1,Q-iP)C_\gamma(Q+iP,\alpha_3,\alpha_4)=4\partial_1C_\gamma(Q,\alpha_3,\alpha_4)\]

%%%%%%%%%%%%%%%%%%%%%%%%%%%%%%%%%%%%%%%%%%%%%%%%%%%%%%%%%%%%
\section{Conical singularities}
\label{app:conical}
Here we reproduce \cite{Bav}, Appendix B for the commodity of the reader.

We study the effect of a change of measure with respect to the Liouville field. Let $X$ be a GFF on $\S^2$ with some background metric $g$ and $dM^\gamma_g$ be the associated chaos measure (regularised in $g$). Let $\omega\in H^1_0$ be a function such that $e^{\frac{Q}{2}\omega}\in L^1(dM^\gamma_g)$. Let $\hat{g}:=e^{\omega}g$ and $dM^\gamma_{\hat{g}}$ be the chaos of $X$ regularised in $\hat{g}$. Then for all $\kappa>0$, applying successively Girsanov's theorem and conformal covariance, we find
\begin{equation}
\E\left[e^{\langle X,\frac{Q}{2}\omega\rangle_\nabla-\frac{Q^2}{8}\norm{\omega}_\nabla^2}M^\gamma_g(\S^2)^{-\kappa}\right]=\E\left[\left(\int_{\S^2}e^{\frac{\gamma Q}{2}\omega}dM^\gamma_g\right)^{-\kappa}\right]=\E\left[M^\gamma_{\hat{g}}(\S^2)^{-\kappa}\right]
\end{equation}
In particular, the vertex operator which is formally written $V_\alpha(z)=e^{\alpha X(z)-\frac{\alpha^2}{2}\E[X(z)^2]}$ is a special case of the previous setting with $\omega=\frac{2\alpha}{Q}G(z,\cdot)$. Hence, after regularising, we find that adding a vertex operator is the same as conformally multiplying the metric by Green's function, i.e. we have $\hat{g}=e^{\frac{2\alpha}{Q}G(z,\cdot)}g$. Hence the metric behaves like $|x-z|^{-\frac{2\alpha}{Q}}$ near 0 so it has a conical singularity of order $\alpha/Q$.

\begin{figure}[h!]
\centering
\includegraphics[scale=0.7]{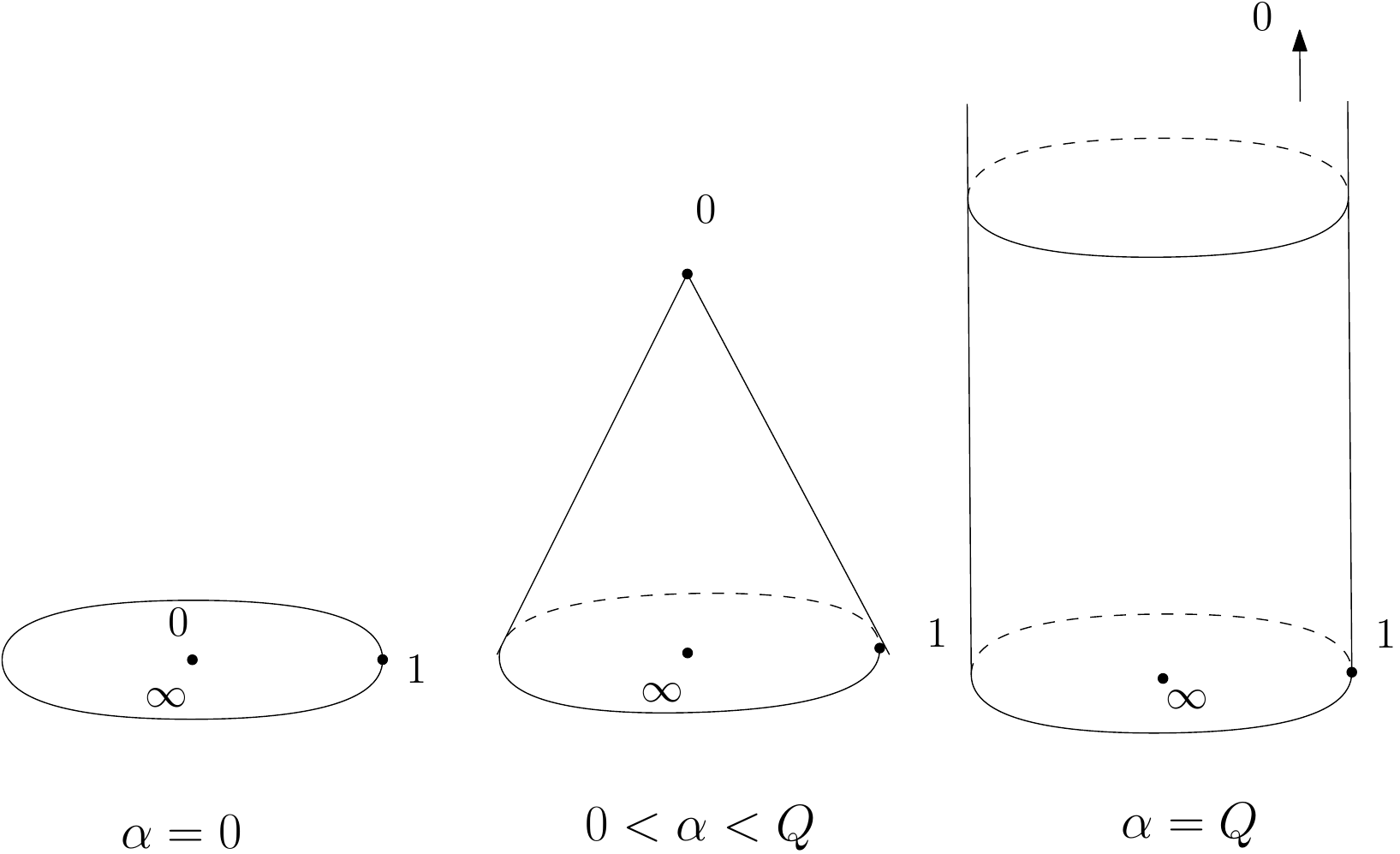}
\caption{The effect of the the vertex operator $V_\alpha(0)$ in the cr\^epe metric.}
\end{figure}

If $\alpha=Q$, the singularity is no longer integrable, so the volume is infinite and the surface has a semi-infinite cylinder. Loosely, we will refer to this situation as a cusp, even though the hyperbolic cusp has finite volume because of the extra $\log$-correction in the metric:
\[\log\hat{g}(z+h)=-2\log|h|-2\log\log\frac{1}{|h|}+O(1)\]
The reason for this abuse of terminology is that we are interested in GMC measure. Indeed, suppose $z=0$ in the sphere coordinates. By conformal covariance, if we use the cylinder coordinates, the $\log$-correction term is the same as shifting the radial part of the GFF from the Brownian motion $(B_s)_{s\geq0}$ to $(B_s-Q\log(1+s))_{s\geq0}$. Up to time $t$, this corresponds to a change of measure given by the exponential martingale $e^{-Q\int_0^t\frac{dB_s}{1+s}-\frac{Q^2}{2}\int_0^t\frac{1}{(1+s)^2}ds}$, which is uniformly integrable since $\int_0^\infty\frac{1}{(1+t)^2}dt<\infty$. So the new field is absolutely continuous with respect to the old one, meaning that GMC does not make a difference between a Euclidean cylinder and a hyperbolic cusp.

Another way to see this is to look at the curvature, which reads in the distributional sense
\[K_{\hat{g}}=e^{-\frac{2\alpha}{Q}G(z,\cdot)}\left(K_g+\frac{4\pi\alpha}{Q}\left(\delta_z-\frac{1}{\Vol_g(\S^2)}\right)\right)\]
where $\Vol_g(\S^2)$ is the volume of the sphere in the metric $g$. Thus the metric has an atom of curvature at $z$, meaning it has a conical singularity.

Of course, when $\alpha=Q$, the singularity is no longer integrable and the metric looks like a semi-infinite (flat) cylinder near 0.

\section{The normalising constant in \eqref{eq:prob_density} and \eqref{eq:normalise_boundary}}
\label{app:constant}
We present the computation of the normalising constant for $f_{\gamma,\mu}$ (in a more general setting). The idea is that integrating over the location of a $\gamma$-insertion is the same as differentiating with respect to the cosmological constant. We present the main steps and leave the details to the reader. 

Let $N\geq3$ and $z_1,...,z_N\in\widehat{\C}$ pairwise disjoint and $(\alpha_1,...,\alpha_N)$ satisfying the Seiberg bounds. For notational convenience, we write $\mc{G}(x):=\sum_{i=1}^N\alpha_iG(z_i,x)$ and as usual $\sigma=\sum_{i=1}^N\frac{\alpha_i}{Q}-2$.

Using Cameron-Martin's theorem to go from the second to third line we find
\begin{equation}
\begin{aligned}
&\frac{1}{2}e^{-\underset{1\leq i<j}{\sum}\alpha_i\alpha_j G(z_i,z_j)}\int_{\widehat{\C}}\left\langle V_\gamma(z)\prod_{i=1}^NV_{\alpha_i}(z_i)\right\rangle dz\\
&\qquad=\int_{\widehat{\C}}e^{\gamma\mc{G}(z)}\int_\R e^{(Q(\sigma+\frac{\gamma}{Q})c}\E\left[\exp\left(-\mu e^{\gamma c}M^\gamma\left(e^{\gamma(\mc{G}+\gamma G(z,\cdot))}\right)\right)\right]dcd^2z\\
&\qquad=\E\left[\int_\R e^{Q\sigma c}e^{\gamma c}M^\gamma\left(e^{\gamma\mc{G}}\right)\exp\left(-\mu e^{\gamma c}M^\gamma\left(e^{\gamma\mc{G}}\right)\right)dc\right]\\
&\qquad=-\frac{1}{2}e^{-\underset{1\leq i<j}{\sum}\alpha_i\alpha_j G(z_i,z_j)}\frac{\partial}{\partial\mu}\left\langle\prod_{i=1}^NV_{\alpha_i}(z_i)\right\rangle
%=\frac{3\gamma-2Q}{\gamma\mu}\left\langle\prod_{i=1}^NV_{\alpha_i}(z_i)\right\rangle
%&\qquad=2\gamma^{-1}\mu^{-\frac{4\gamma-2Q}{\gamma}}\Gamma\left(\frac{4\gamma-2Q}{\gamma}\right)\int_{\widehat{\C}}e^{\gamma(G(0,z)+G(1,z)+G(\infty,z))}\\
%&\qquad\qquad\qquad\times\E\left[e^{\gamma X(z)-\frac{\gamma^2}{2}\E[X(z)^2]}\left(\int e^{\gamma^2(G(0,\cdot)+G(1,\cdot)+G(\infty,\cdot))}dM^\gamma\right)^{-\frac{4\gamma-2Q}{\gamma}}\right]dz\\
%&\qquad=2\frac{3\gamma-2Q}{\gamma\mu}\mu^{-\frac{3\gamma-2Q}{\gamma}}\Gamma\left(\frac{3\gamma-2Q}{\gamma}\right)\E\left[\left(\int e^{\gamma^2(G(0,\cdot)+G(1,\cdot)+G(\infty,\cdot))}dM^\gamma\right)^{-\frac{3\gamma-2Q}{\gamma}}\right]\\
%&\qquad=\frac{3\gamma-2Q}{\gamma\mu}\langle V_\gamma(0)V_\gamma(1)V_\gamma(\infty)\rangle=\frac{3\gamma-2Q}{\gamma\mu}C_\gamma(\gamma,\gamma,\gamma)
\end{aligned}
\end{equation}
so that in the end
\begin{equation}
\label{eq:constant}
\int_{\widehat{\C}}\left\langle V_\gamma(z)\prod_{i=1}^NV_{\alpha_i}(z_i)\right\rangle d^2z=-\frac{\partial}{\partial\mu}\left\langle\prod_{i=1}^NV_{\alpha_i}(z_i)\right\rangle=\frac{Q\sigma}{\gamma\mu}\left\langle\prod_{i=1}^NV_{\alpha_i}(z_i)\right\rangle
\end{equation}
where we simply used that $\langle\prod_{i=1}^NV_{\alpha_i}(z_i)\rangle$ is equal to $\mu^{-\frac{Q\sigma}{\gamma}}$ times some quantity independent of $\mu$. In particular this yields \eqref{eq:prob_density} for $N=3$ and $(\alpha_1,\alpha_2,\alpha_3)=(\gamma,\gamma,\gamma)$.

Similarly, in the disc case, we find that for $(\alpha_1,...,\alpha_N,\beta_1,...,\beta_M)$ satisfying the Seiberg bounds, we have
\[\int_\H\left\langle V_\gamma(z)\prod_{i=1}^NV_{\alpha_i}(z_i)\prod_{j=1}^MB_{\beta_j}(x_j)\right\rangle d^2z=-\frac{\partial}{\partial\mu}\left\langle\prod_{i=1}^NV_{\alpha_i}(z_i)B_{\beta_j}(x_j)\right\rangle\]
and
\[\int_\R\left\langle B_\gamma(x)\prod_{i=1}^NV_{\alpha_i}(z_i)\prod_{j=1}^MB_{\beta_j}(x_j)\right\rangle dx=-\frac{\partial}{\partial\mu_{\partial}}\left\langle\prod_{i=1}^NV_{\alpha_i}(z_i)B_{\beta_j}(x_j)\right\rangle\]
In general, this does not simplify as nicely as \eqref{eq:constant} but if e.g. $\mu=0$, then we have for instance
\[\int_\R\langle B_\gamma(x)V_\gamma(i)B_\gamma(0)\rangle dx=\frac{3\gamma-2Q}{2\gamma\mu}R(\gamma,\gamma)\]
%%%%%%%%%%%%%%%%%%%
\hspace{10 cm}

 \end{document}